\tikzset{
>=Stealth,
node distance=2.5cm,
main node/.style={circle,inner sep=1pt,fill=gray!20},
freccia/.style={->,shorten >=2pt,shorten <=2pt,semithick},
ciclo/.style={out=130, in=50, loop, distance=2cm},
baseline=(current bounding box.center)}
\newcommand{\doi}[1]{\url{https://doi.org/#1}}
\newcommand{\isbn}[1]{\url{https://isbnsearch.org/isbn/#1}}
\newcommand{\arxiv}[1]{\href{https://arxiv.org/abs/#1}{preprint arXiv:#1}}
\newcommand{\web}[1]{\url{#1}}
\renewcommand{\emptyset}{\varnothing}
\numberwithin{equation}{section}
\newtheorem{prop}{Proposition}[section]
\newtheorem{thm}[prop]{Theorem}
\newtheorem{lemma}[prop]{Lemma}
\newtheorem{cor}[prop]{Corollary}
\newtheorem{df}[prop]{Definition}
\theoremstyle{remark}
\newtheorem{ex}[prop]{Example}
\newtheorem{rem}[prop]{Remark}
\newcommand{\C}{\mathbb{C}}
\newcommand{\Z}{\mathbb{Z}}
\newcommand{\N}{\mathbb{N}}
\newcommand{\R}{\mathbb{R}}
\newcommand{\inner}[1]{\left<#1\right>}
\newcommand{\ket}[1]{\left|#1\right>}
\newcommand{\ketbra}[2]{\mbox{\ensuremath{\left|#1\right>\hspace{-3pt}\left<#2\right|}}}
\def\@tocline#1#2#3#4#5#6#7{\relax
  \ifnum #1>\c@tocdepth 
  \else
    \par \addpenalty\@secpenalty\addvspace{#2}%
    \begingroup \hyphenpenalty\@M
    \@ifempty{#4}{%
      \@tempdima\csname r@tocindent\number#1\endcsname\relax
    }{%
      \@tempdima#4\relax
    }%
    \parindent\z@ \leftskip#3\relax \advance\leftskip\@tempdima\relax
    \rightskip\@pnumwidth plus4em \parfillskip-\@pnumwidth
    #5\leavevmode\hskip-\@tempdima
      \ifcase #1
       \or\or \hskip 1em \or \hskip 2em \else \hskip 3em \fi%
      #6 \hskip 0.5em \nobreak\relax
    \dotfill\hbox to\@pnumwidth{\@tocpagenum{#7}}\par
    \nobreak
    \endgroup
  \fi}
\title{On amplified graph C*-algebras as cores\\[5pt] of Cuntz-Krieger algebras}
\date{}
\author[F.~D'Andrea, S.E.~Zegers]{Francesco D'Andrea and Sophie Emma Zegers}
\address[F.~D'Andrea]{Dipartimento di Matematica e Applicazioni ``R.~Caccioppoli'',
	Universit\`a di Napoli Federico II, and I.N.F.N. Sezione di Napoli, Via Cintia, 80126 Napoli, Italy.}
\email{francesco.dandrea@unina.it}
\address[S.E.~Zegers]{Delft institute of applied mathematics, Delft University of Technology, P.O. Box 5031, 2600 GA Delft, The Netherlands}
\email{s.e.zegers@tudelft.nl, sophieemmazegers@gmail.com}
\begin{document}

\subjclass[2020]{46L85, 46L67, 46L80.}

\keywords{Amplified graph C*-algebra; AF core; dimension groups; quantum projective spaces; quantum Grassmannians.}

\begin{abstract}
Given a finite directed acyclic graph $R$, we construct from it two graphs $E_R$ and $F_R$, one by adding a loop at every vertex of $R$ and one by replacing every arrow of $R$ by countably infinitely many arrows. We show that the graph C*-algebra $C^*(F_R)$ is isomorphic to the AF core of $C^*(E_R)$. Examples include C*-algebras of quantum flag manifolds and quantum teardrops. We discuss in detail the quantum Grassmannian $Gr_q(2,4)$ and use our description as AF core to study its CW-structure.
\end{abstract}

\maketitle

\begin{center}
\begin{minipage}{0.8\textwidth}
\parskip=0pt\small\tableofcontents
\end{minipage}
\end{center}

\bigskip

\section{Introduction}\label{sec:1}
In a series of seminal papers, Hong and Szyma{\'n}ski \cite{HS02,HS03,HS08} gave a description of numerous quantum homogeneous spaces as graph C*-algebras.
Among their examples, the quantum sphere $S^{2n-1}_q$ of Vaksman and Soibelman \cite{VS90} is described by a C*-algebra $C(S^{2n-1}_q)$ which is isomorphic to the graph C$^*$-algebra of either the graph $L_{2n-1}$ in Figure~\ref{fig:graphSnq} or the graph $\widetilde{L}_{2n-1}$ in Figure~\ref{fig:sphereB}. 
\begin{figure}[t]
\begin{tikzpicture}[font=\small]

\clip (-1,1.5) rectangle (14,-3.5);

\node[main node] (1) {};
\node (2) [right of=1] {};
\node (3) [right of=2] {};
\node (4) [right of=3] {};
\node (5) [right of=4] {};
\node (6) [right of=5] {};

\filldraw (1) circle (0.06) node[below left] {$1$};
\filldraw (2) circle (0.06);
\filldraw (3) circle (0.06);
\filldraw (4) circle (0.06);
\filldraw (6) circle (0.06) node[below right,inner sep=2pt] {$n$};

\path[freccia] (1) edge[ciclo] (1);
\path[freccia] (2) edge[ciclo] (2);
\path[freccia] (3) edge[ciclo] (3);
\path[freccia] (4) edge[ciclo] (4);
\path[freccia] (6) edge[ciclo] (6);

\path[freccia]
	(1) edge (2)
	(2) edge (3)
	(3) edge (4);
\path[freccia,dashed] (4) edge (6);
\path[freccia]
	(1) edge[bend right] (3)
	(1) edge[bend right=40] (4)
	(2) edge[bend right] (4);
\path[white]
	(1) edge[bend right=60] coordinate (7) (6)
	(2) edge[bend right=50] coordinate (8) (6)
	(3) edge[bend right=40] coordinate (9) (6);
\path[shorten <=1pt]
	(1) edge[out=-60,in=180] (7)
	(2) edge[out=-50,in=180] (8)
	(3) edge[out=-40,in=180] (9);
\path[->,dashed,shorten >=1pt]
	(7) edge[out=0,in=240] (6)
	(8) edge[out=0,in=230] (6)
	(9) edge[out=0,in=220] (6);
\path[freccia,dashed] (4) edge[bend right,dashed] (6);
\end{tikzpicture}

\caption{The graph $L_{2n-1}$.}
\label{fig:graphSnq}

\vspace{6mm}

\begin{tikzpicture}[font=\small]

\clip (-1,-0.5) rectangle (14,1.4);

      \node[main node] (1) {};
      \node (2) [main node,right of=1] {};
      \node (3) [main node,right of=2] {};
      \node (4) [main node,right of=3] {};
      \node (5) [right of=4] {};
      \node (6) [main node,right of=5] {};

      \filldraw (1) circle (0.06) node[below left] {$1$};
      \filldraw (2) circle (0.06);
      \filldraw (3) circle (0.06);
      \filldraw (4) circle (0.06);
      \filldraw (6) circle (0.06) node[below right] {$n$};

      \path[freccia] (1) edge[ciclo] (1);
      \path[freccia] (2) edge[ciclo] (2);
      \path[freccia] (3) edge[ciclo] (3);
      \path[freccia] (4) edge[ciclo] (4);
      \path[freccia] (6) edge[ciclo] (6);

      \path[freccia] (1) edge (2) (2) edge (3) (3) edge (4);
      \path[freccia,dashed] (4) edge (6);
\end{tikzpicture}

\caption{The graph $\widetilde{L}_{2n-1}$.}
\label{fig:sphereB}
\end{figure}
The quantum sphere $C(S_q^{2n+1})$ comes with a natural action of $U(1)$, the group of unit complex numbers. Moreover any graph C*-algebra has a natural $U(1)$ action known as the gauge action. Since the isomorphism $C(S^{2n-1}_q)\to C^*(L_{2n-1})$ is $U(1)$-equivariant, it induces an isomorphism $C(\C P^{n-1}_q):=C(S^{2n-1}_q)^{U(1)}\to C^*(L_{2n-1})^{U(1)}$ between fixed point subalgebras,
where $\C P^{n-1}_q$ is the quantum projective space associated to $S^{2n-1}_q$. As explained in \cite[Sect.~4.3]{HS02}, the core $C^*(L_{2n-1})^{U(1)}$ is isomorphic to the graph C*-algebra $C^*(F_{n-1})$, where $F_{n-1}$ is the graph in Figure~\ref{fig:graphCPnq}.

\begin{figure}[b]
\begin{tikzpicture}[font=\small]

\clip (-1,0.5) rectangle (13.7,-3.5);

\node[main node] (1) {};
\node (2) [right of=1] {};
\node (3) [right of=2] {};
\node (4) [right of=3] {};
\node (5) [right of=4] {};
\node (6) [right of=5] {};

\filldraw (1) circle (0.06) node[below left] {$1$};
\filldraw (2) circle (0.06);
\filldraw (3) circle (0.06);
\filldraw (4) circle (0.06);
\filldraw (6) circle (0.06) node[below right] {$n$};

\path[freccia]
	(1) edge node[fill=white] {$\infty$} (2)
	(2) edge node[fill=white] {$\infty$} (3)
	(3) edge node[fill=white] {$\infty$} (4);
\path[freccia,dashed] (4) edge (6);
\path[freccia]
	(1) edge[bend right] node[fill=white] {$\infty$} (3)
	(1) edge[bend right=40] node[fill=white] {$\infty$} (4)
	(2) edge[bend right] node[fill=white] {$\infty$} (4);
\path[white]
	(1) edge[bend right=60] coordinate (7) (6)
	(2) edge[bend right=50] coordinate (8) (6)
	(3) edge[bend right=40] coordinate (9) (6);
\path[shorten <=1pt]
	(1) edge[out=-60,in=180] node[fill=white] {$\infty$} (7)
	(2) edge[out=-50,in=180] node[fill=white] {$\infty$} (8)
	(3) edge[out=-40,in=180] node[fill=white] {$\infty$} (9);
\path[->,dashed,shorten >=1pt]
	(7) edge[out=0,in=240] (6)
	(8) edge[out=0,in=230] (6)
	(9) edge[out=0,in=220] (6);
\path[freccia,dashed] (4) edge[bend right,dashed] (6);
\end{tikzpicture}

\caption{The graph $F_{n-1}$.}
\label{fig:graphCPnq}

\vspace{8mm}

\begin{tikzpicture}[font=\small]

      \node[main node] (1) {};
      \node (2) [main node,right of=1] {};
      \node (3) [main node,right of=2] {};
      \node (4) [main node,right of=3] {};
      \node (5) [right of=4] {};
      \node (6) [main node,right of=5] {};

      \filldraw (1) circle (0.06) node[below left] {$1$};
      \filldraw (2) circle (0.06);
      \filldraw (3) circle (0.06);
      \filldraw (4) circle (0.06);
      \filldraw (6) circle (0.06) node[below right] {$n$};

      \path[freccia] (1) edge node[fill=white] {$\infty$} (2) (2) edge node[fill=white] {$\infty$} (3) (3) edge node[fill=white] {$\infty$} (4);
      \path[freccia,dashed] (4) edge (6);
\end{tikzpicture}

\caption{The graph $\widetilde{F}_{n-1}$.}
\label{fig:CPnqB}
\end{figure}

The graph $F_{n-1}$ is an example of an \emph{amplified graph}: whenever there is an arrow between two vertices, there are countably infinitely many of them.
We know from \cite{ERS12} that the C*-algebra of an amplified graph only depends on its transitive closure, meaning that one also has the isomorphism $C^*(F_{n-1})\simeq C^*(\widetilde{F}_{n-1})$, where $\widetilde{F}_{n-1}$ is the graph in Figure~\ref{fig:CPnqB}. It is natural to wonder whether $C^*(\widetilde{F}_{n-1})$ is isomorphic to the core of $C^*(\widetilde{L}_{2n-1})$. The answer is not obvious, since the isomorphism $C^*(L_{2n-1})\to C^*(\widetilde{L}_{2n-1})$ is not $U(1)$-equivariant with respect to the gauge actions (see e.g.~the formulas in \cite[Thm.~3.11]{Dan23}), and so it does not induce an isomorphism of cores.

The situation is summarized in the following picture:
\begin{equation}\label{eq:commdiag}
\begin{tikzcd}
C(S^{2n-1}_q) \arrow[r, "\sim","\text{equiv.}"'] & C^*(L_{2n-1}) \arrow[r, "\sim","\text{not equiv.}"'] & C^*(\widetilde{L}_{2n-1}) \\
C(\C P^{n-1}_q) \arrow[u,phantom, sloped, "\subseteq"]\arrow[r, "\sim"] & C^*(L_{2n-1})^{U(1)} \arrow[u,phantom, sloped, "\subseteq"]\arrow[d, sloped,phantom,"\cong"]\arrow[r,"/" marking] &  C^*(\widetilde{L}_{2n-1})^{U(1)} \arrow[u,phantom, sloped, "\subseteq"]\arrow[d,phantom,"(\text{\rotatebox[origin=c]{270}{$\,\cong$}})"] \\
& C^*(F_{n-1})\arrow[r,phantom,"\cong"] & C^*(\widetilde{F}_{n-1})
\end{tikzcd}
\end{equation}
The isomorphisms in the first line are given by explicit maps, the first one is equivariant and induces an isomorphism between cores, while the second doesn't. The isomorphisms written as $\cong$ in the diagram are not given explicitly.

The set inclusions in \eqref{eq:commdiag} are noncommutative analogues of the principal $U(1)$-bundle $S^{2n+1}\to\C P^n$, and are themselves principal in the sense explained below. The graphs $L_{2n-1}$ and $\widetilde{L}_{2n-1}$ are finite with no sinks, and we argue that the gauge action on a unital graph C*-algebra $C^*(E)$ is principal in the sense of Ellwood \cite{Ell00} if and only if the graph $E$ is finite with no sinks.
This follows from combining a few well-known results. Firstly,
the $U(1)$ action on $C^*(E)$ is principal if and only if its restriction to the Peter-Weyl subalgebra $\mathcal{P}_{U(1)}(C^*(E))$ defines a Hopf-Galois extension \cite{BDCH17}. Next, a $\mathscr{O}(U(1))$-extensions is Hopf-Galois if and only if the $\Z$-grading induced by the $U(1)$ action is a strong grading \cite[Thm.~8.1.7]{Mon93}. Next, $\mathcal{P}_{U(1)}(C^*(E))$ is strongly $\Z$-graded if and only if $C^*(E)$ is strongly $\Z$-graded (in the sense of C*-algebras). And finally, a unital graph C*-algebra $C^*(E)$ is strongly $\Z$-graded if and only if the graph $E$ is finite with no sinks~\cite{Haz13,CHR19,LO22}.

It is worth recalling that every $\mathscr{O}(U(1))$-Galois extension is automatically faithfully flat, which in the noncommutative framework replaces the condition of local triviality of a principal bundle. Indeed, since the Hopf algebra $\mathscr{O}(U(1))$ is cosemisimple, every right $\mathscr{O}(U(1))$-comodule is injective \cite[Pag.~290]{Swe69}, and this together with bijectivity of the antipode implies that every $\mathscr{O}(U(1))$-Galois extension is faithfully flat \cite[Theorem I]{Sch90}.

The above arguments motivate considering the inclusions in \eqref{eq:commdiag}, and more generally any embedding
\[
C^*(E)^{U(1)}\lhook\joinrel\longrightarrow C^*(E)
\]
with $E$ finite and with no sinks, as a kind of non-commutative principal (locally trivial) $U(1)$-bundle.

The aim of this paper is to generalize the above correspondence between amplified graph C*-algebras and cores of Cuntz-Krieger algebras as much as possible.
We will focus on \emph{unital} C*-algebras, which as costumary we interpret as describing \emph{compact} quantum spaces.
Given an amplified graph $F$, we want to construct a graph $E$ such that
$C^*(F)$ is isomorphic to the core of $C^*(E)$. We want $C^*(E)$ to be (unital and) strongly $\Z$-graded for the grading induced by the gauge action, thus $E$ must be finite without sinks. For such a graph, $C^*(E)^{U(1)}$ is an AF algebra, cf.~eq.~(3.9) in \cite{R05}. On the other hand, $C^*(F)$ is AF if and only if $F$ has no directed cycles (cf.~\cite[Thm.~2.4]{KPR98} and \cite[Cor.~2.13]{DT05}).

This leads to our main Theorem \ref{thm:1}:
if $R$ is a finite directed acyclic graph with no multiple edges, $E_R$ is the graph obtained from $R$ by adding a loop to every vertex, and $F_R$ is the graph obtained from $R$ by replacing every arrow of $R$ by countably infinitely many arrows, then
\begin{equation}\label{eq:iso}
C^*(F_R)\cong C^*(E_R)^{U(1)} .
\end{equation}
If $R$ is the graph with vertex set $\{1,\ldots,n\}$ and one arrow from $i$ to $j$ for each $i<j$, then $E_R=L_{2n-1}$ and $F_R=F_{n-1}$ are the graphs in Figures \ref{fig:graphSnq} and \ref{fig:graphCPnq}.
If $R$ is the graph with vertex set $\{1,\ldots,n\}$ and one arrow from $i$ to $i+1$ for each $1\leq i<n$, then $E_R=\widetilde{L}_{2n-1}$ and $F_R=\widetilde{F}_{n-1}$ are the graphs in Figures \ref{fig:sphereB} and \ref{fig:CPnqB}.
In order to prove \eqref{eq:iso} we don't give an explicit formula for the isomorphism, but rather compute the dimension group of the two C*-algebras (cf.~Prop.~\ref{prop:1}). This is a result of independent interest. In particular, we show that the dimension groups of $C(\C P^{n-1})$ and $C(\C P^{n-1}_q)$ are not isomorphic.

A byproduct of our main theorem is the isomorphism 
$C^*(F_{n-1})\cong C^*(L_{2n-1})^{U(1)}$, that was discovered first in \cite{HS02}, as well as $C^*(\widetilde{F}_{n-1})\cong C^*(\widetilde{L}_{2n-1})^{U(1)}$.
Further examples include C*-algebras of quantum teardrops
and of the quantum (2,4)-Grassmannian $Gr_q(2,4)$. For the latter, a description as amplified graph C*-algebras was given in \cite{BBKS22,S22}.
Here we get an alternative description as the AF core of a Cuntz-Krieger algebra, which is used to derive a CW-decomposition of $Gr_q(2,4)$ (cf.~Theorem \ref{thm:qCW}).

The structure of the paper is the following. In Sect.~\ref{sec:2} we collect some preliminary notions and results about graph C*-algebras. In Sect.~\ref{sec:maint}
we prove our main theorem about finite directed acyclic graphs without multiple edges.
In Sect.~\ref{sec:4} we study some examples of quantum flag manifolds, in particular $\C P^{n-1}_q$ and $Gr_q(2,4)$, as well as quantum teardrops. In Sect.~\ref{sec:5} we study the CW-structure of $Gr_q(2, 4)$.

\medskip

\begin{center}
\textsc{Acknowledgements}
\end{center}

\noindent
FD is a member of INdAM-GNSAGA (Istituto Nazionale di Alta Matematica “F. Sev-
eri”) -- Unit\`a di Napoli and of INFN -- Sezione di Napoli.
This research is part of the EU Staff Exchange project 101086394 ``Operator Algebras That One Can See''.
We thank S{\o}ren Eilers, Karen Strung, Raemonn O'Buachalla and Elmar Wagner for discussions.

\section{Preliminaries}\label{sec:2}
\subsection{Graph C*-algebras}
Let us recall the definition and some basic properties of graph C*-algebras from \cite{BPRS00,FLR00,R05}. We adopt the conventions of \cite{BPRS00,FLR00}, i.e.~the roles of source and range maps are exchanged with respect to~\cite{R05}.

A directed graph $E=(E^0,E^1,s,t)$ consists of a set $E^0$ of vertices, a set $E^1$
of edges, and source and target maps $s,t:E^1\to E^0$. If $e\in E^1$, $v:=s(e)$ and $w:=t(e)$ we say that $v$ \emph{emits} $e$ and that
$w$ \emph{receives} $e$.

A vertex $v\in E^0$ is called a \emph{sink} if it has no outgoing edges, i.e.~$s^{-1}(v)=\emptyset$, an \emph{infinite emitter} if $|s^{-1}(v)|=\infty$,
\emph{singular} if it is either a sink or an infinite emitter,
\emph{regular} if it is not singular, i.e.~if $s^{-1}(v)$ is a finite non-empty set.
We denote by $E^0_{\mathrm{reg}}$ the set of regular vertices.
A graph $E$ is called \emph{row-finite} if it has no infinite emitters, and \emph{finite} if both $E^0$ and $E^1$ are finite sets.

A Cuntz-Krieger $E$-family for an arbitrary graph $E$ consists of mutually orthogonal projections $\{P_v:v\in E^0\}$ and partial isometries $\{S_e:e\in E^1\}$ with orthogonal ranges satisfying the following relations \cite{FLR00}
\begin{alignat}{4}
S^*_eS_e &= P_{r(e)} && \forall\;e\in E^1 , \tag{CK1}\label{eq:CK1} \\
S_eS_e^* &\leq P_{s(e)} && \forall\;e\in E^1 , \tag{CK2}\label{eq:CK2} \\
\sum_{e\in s^{-1}(v)}\!\! \!S_eS_e^* &=P_v && \forall\;v\in E^0_{\mathrm{reg}} . \tag{CK3}\label{eq:CK3}
\end{alignat}
The graph C*-algebra $C^*(E)$ of a graph $E$ is defined as the universal C*-algebra generated by a Cuntz-Krieger $E$-family, and is unique up to an isomorphism which sends Cuntz-Krieger generators into Cuntz-Krieger generators, see \cite{BPRS00,FLR00,R05}.
The C*-algebra $C^*(E)$ is unital if and only if $E^0$ is finite, and in this case the unit is given by the element
\[
1:=\sum_{v\in E^0}P_v .
\]

The \emph{gauge action} \mbox{$\gamma:U(1)\to\mathrm{Aut}\,C^*(E)$} is defined on generators by $\gamma_u(S_e)=u\,S_e$ and $\gamma_u(P_v)=P_v$ for all $u\in U(1),e\in E^1,v\in E^0$. The induced $\Z$-grading assigns to each $S_e$ degree $+1$, to $S_e^*$ degree $-1$ and to $P_v$ degree $0$.
The C*-subalgebra of gauge-invariant elements is denoted $C^*(E)^{U(1)}$ and called the \emph{core} of $C^*(E)$.

A subset $H\subseteq E^0$ is called \emph{hereditary} if, for every $e\in E^1$, $s(e)\in H$ implies $r(e)\in H$, and it is called \emph{saturated} if, for every $v\in E^0$,
$r(s^{-1}(v))\subseteq H$ implies $v\in H$.

Let $I_H$ be the closed two-sided *-ideal in $C^*(E)$ generated by $\{P_v:v\in H\}$.
If $H\subseteq E^0$ is an hereditary and saturated subset, then $I_H$ is a gauge-invariant ideal. When $E$ is row-finite, there is a $U(1)$-equivariant isomorphism $C^*(E)/I_H\cong C^*(E\smallsetminus H)$, where $E\smallsetminus H$ is the subgraph of $E$ obtained by removing all vertices in $H$ and all edges with range in $H$ (see Cor.~3.5 in \cite{BHRS02} with $B=\emptyset$). Note that if $E$ is an arbitrary graph then the quotient by $I_H$ is still a graph $C^*$-algebra but the graph is more involved \cite{BHRS02}.

\medskip

In this paper we are interested in amplified graph C*-algebras and Cuntz-Krieger algebras. A graph is called \emph{amplified} if, whenever there is an edge between two vertices, there are infinitely many of them \cite{ERS12}.
A graph C*-algebra $C^*(E)$ is a \emph{Cuntz-Krieger algebra} if $E$ is a finite graph and its adjacency matrix is a $\{0,1\}$-matrix with no zero row or column \cite[Rem.~2.8]{R05} (this means that the graph has at most one arrow between any two vertices, and every vertex emits and receives at least one edge).

\medskip

\subsection{The dimension group}
If $A$ is a unital stably finite C*-algebra, then the triple
\[
\big(K_0(A),K_0(A)^+,[1_A]\big)
\]
is an ordered abelian group with order unit, called the \emph{dimension group} of $A$.
Here $K_0(A)^+$ is the set of classes of projections in $M_\infty(A)$ and $[1_A]$ the class of the unit of the algebra.
As costumary, for $x,y\in K_0(A)$, we write
\[
x\leq y \quad \Longleftrightarrow \quad y-x\in K_0(A)^+ .
\]

Both commutative C*-algebras and AF algebras are stably finite.
In particular, unital AF algebras $A$ are classified by their dimension group \cite{Ell76}.
If $A=C(X)$ is commutative, then the partial order is compatible with the multiplication in $K_0(A)=K^0(X)$, that is
\[
x,y\in K^0(X)^+ \quad\Longrightarrow\quad xy\in K^0(X)^+ .
\]
(This simply follows from the fact that the multiplication is induced by the tensor product of vector bundles.) Thus, $K^0(X)^+$ is a sub-semiring of $K^0(X)$.

\section{The main theorem}\label{sec:maint}

Let $V$ be a set and $R\subseteq V\times V$ a relation. We will think of it as a directed graph, with vertex set $V$ and arrows $(v,w)\in R$ with source $v$ and target $w$. Its transitive closure, denoted by $\overline{R}$, is the graph with the same vertex set and an arrow $(v,w)\in\overline{R}$ whenever there is a directed walk (with positive length) in $R$ from $v$ to $w$.

We denote by $\Delta:=\{(v,v)\mid v\in V\}$ the diagonal of $V\times V$.

In the following, we will assume that $V$ is finite and that $R$ is a \emph{directed acyclic graph} (i.e.\ it has no directed cycles, in particular no loops). This is equivalent to saying that $\overline{R}$ is a strict partial order, i.e. that (thought as a graph) it has no loops:
\[
\overline{R}\cap\Delta= \emptyset
\]
(recall that a transitive relation is asymmetric if and only if it is irreflexive).

For future use, we denote by $\underline{R}$ the \emph{transitive reduction} (or \emph{Hasse diagram}) of $R$. By definition, $(v,w)\in\underline{R}$ if and only if
$(v,w)\in R$ and there is no directed walk from $v$ to $w$ in $R$ of length greater than $1$. The relation $\underline{R}$ is the smallest one on $V$ whose transitive closure is $\overline{R}$.

We now construct two more directed graphs associated to $R$.

The first one $E_R:=(E^0_R,E^1_R)$ is the graph obtained from $R$ by adding the diagonal of $V\times V$, thus
\begin{equation}\label{eq:E}
E_R^0:=V, \qquad\quad E_R^1:=R\cup\Delta .
\end{equation}
This is a finite graph with no sinks, thus $C^*(E_R)^{U(1)}$ is a unital AF algebra.

The second graph $F_R=(F^0_R,F^1_R,s,t)$ is obtained by replacing each arrow of $R$ by countably infinite arrows. That is,
\begin{equation}\label{eq:F}
\begin{gathered}
F^0_R:=V, \qquad \quad
F^1_R:=\big\{(v,n,w)\big|(v,w)\in R,n\in\N\big\}  , \\
s(v,n,w):=v , \qquad \quad t(v,n,w):=w.
\end{gathered}
\end{equation}
Since $R$ has no directed cycles, $F_R$ has no directed cycles, hence $C^*(F_R)$ is a unital AF algebra, too. The graph $F_R$ is called the \emph{amplification} of $R$ in \cite{ERS12}.

\begin{ex}
Let $V=\{1,2,3\}$ and let $R=\{(1,2), (1,3), (2,3)\}$ be the relation depicted below:
\begin{center}
\begin{tikzpicture}[font=\small]
\node[main node] (1) at (220:2) {};
\node[main node] (2) at (0,0) {};
\node[main node] (3) at (320:2) {};

\node at (-1,-0.3) {$R$};

\filldraw (1) circle (0.06) node[below left] {$1$};
\filldraw (2) circle (0.06) node[above] {$2$};
\filldraw (3) circle (0.06) node[below right] {$3$};
\path[freccia]
	(1) edge (2)
	(2) edge (3)
	(1) edge (3);
\end{tikzpicture} .
\end{center}
The relation $R$, considered as a graph, is indeed acyclic. The graph $E_R$ 
is the graph $L_5$ in Figure \ref{fig:graphSnq}, describing a $5$-dimensional quantum sphere, and $F_R$ is the graph $F_3$ in Figure \ref{fig:graphCPnq} describing the quantum space $\C P^3_q$. 

Note that if, instead, we consider the relation $R'=\{(1,2),(2,3),(3,1)\}$ depicted below
\begin{center}
\begin{tikzpicture}[font=\small]
\node[main node] (1) at (220:2) {};
\node[main node] (2) at (0,0) {};
\node[main node] (3) at (320:2) {};

\node at (-1,-0.3) {$R'$};

\filldraw (1) circle (0.06) node[below left] {$1$};
\filldraw (2) circle (0.06) node[above] {$2$};
\filldraw (3) circle (0.06) node[below right] {$3$};
\path[freccia]
	(1) edge (2)
	(2) edge (3)
	(3) edge (1);
\end{tikzpicture} ,
\end{center}
then $R'$ as a graph is not acyclic, hence it does not fit into our framework. 
\end{ex}

The aim of this section is to prove the following theorem.

\begin{thm}\label{thm:1}
Let $R\subseteq V\times V$ be a finite directed acyclic graph. Then
\[
C^*(E_R)^{U(1)}\cong C^*(F_R) ,
\]
where the graphs $E_R$ and $F_R$ are defined in \eqref{eq:E} and \eqref{eq:F}, respectively.
\end{thm}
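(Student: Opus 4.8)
The plan is to prove the isomorphism by computing the dimension group of each AF algebra and invoking Elliott's classification theorem (cited as \cite{Ell76}). Since both $C^*(E_R)^{U(1)}$ and $C^*(F_R)$ are unital AF algebras, it suffices to exhibit an isomorphism of ordered abelian groups with order unit between their $K_0$-data. The paper signals this approach explicitly by promising to compute the dimension group in Proposition~\ref{prop:1}, so the theorem should be a corollary of matching those two computations.

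First I would compute the dimension group of $C^*(F_R)$. Since $F_R$ is a finite (in the vertex sense) acyclic graph, $C^*(F_R)$ is AF, and its $K_0$-group together with the positive cone and order unit can be read off from the graph via the standard description of the dimension group as an inductive limit. Because $F_R$ is the amplification of $R$, the relevant data is governed by directed walks in $R$; I expect the dimension group to be expressible in terms of the adjacency structure of the transitive closure $\overline{R}$, reflecting the result of \cite{ERS12} that amplified graph C*-algebras depend only on the transitive closure.

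Next I would compute the dimension group of the core $C^*(E_R)^{U(1)}$. Recall $E_R$ is obtained from $R$ by adjoining a loop at every vertex, so its adjacency matrix is $A_{E_R}=A_R+I$, where $A_R$ is the adjacency matrix of $R$. The core of a graph C*-algebra is a well-understood AF algebra whose dimension group is the stationary inductive limit $\varinjlim(\Z^{V}\xrightarrow{A_{E_R}^{t}}\Z^{V}\xrightarrow{A_{E_R}^{t}}\cdots)$, with positive cone and order unit determined by the Bratteli diagram built from powers of $A_{E_R}$. The key algebraic observation is that, because $R$ is acyclic, $A_R$ is nilpotent, so $A_{E_R}=I+A_R$ is unipotent and its powers $(I+A_R)^k$ stabilize in a controlled way (each entry is a polynomial in $k$ counting walks in $R$). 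I would compare this limit with the dimension group found for $C^*(F_R)$ and construct an explicit order-isomorphism intertwining the two, sending order unit to order unit.

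The main obstacle I anticipate is matching the \emph{positive cones} and \emph{order units}, not the underlying groups. The bare $K_0$-groups are likely to agree after an elementary computation, but AF classification is by the \emph{ordered} group with order unit, so the real work is showing that the isomorphism of groups respects positivity in both directions and carries $[1]$ to $[1]$. This requires understanding precisely which elements are represented by genuine projections in each inductive-limit presentation, which in turn means tracking the telescoping of the Bratteli diagrams carefully; the unipotence of $I+A_R$ should make the cone on the $E_R$ side coincide with the cone on the $F_R$ side after a suitable change of basis indexed by the partial order $\overline{R}$. Handling vertices that are sinks of $R$ (but not of $E_R$, thanks to the added loops) and ensuring the order unit is preserved under the stabilizing identifications is where I would concentrate the most care.
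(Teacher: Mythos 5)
Your overall strategy coincides with the paper's: both dimension groups are computed (Proposition \ref{prop:1} and Proposition \ref{prop:2}) and Elliott's classification \cite{Ell76} finishes the proof of Theorem \ref{thm:1}. Your treatment of the core side is also viable and essentially equivalent to the paper's: for the finite sink-free graph $E_R$ the core is a stationary AF algebra with Bratteli matrix $A_{E_R}=I+A_R$, its $K_0$ is the limit $\varinjlim(\Z^V,A_{E_R}^t)$, and since $A_{E_R}$ is unipotent this telescopes to $\Z^V$ with positive cone $\bigcup_k (A_{E_R}^t)^{-k}\Z_+^V$; the paper packages exactly this telescoping through the projections $Q_{v,k}=S^k_{(v,v)}(S^*_{(v,v)})^k$ and the inversion of $\Gamma=I+\widetilde{\Gamma}$ from \cite{Dan24}, together with a nontrivial induction on the depth $d(v)$ to extract the order relations $[P_v]\geq k[P_w]$.

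The genuine gap is on the $C^*(F_R)$ side and in the cone matching itself. First, $F_R$ is not row-finite (every non-sink is an infinite emitter), so there is no ``standard'' vertex-matrix inductive-limit presentation from which to read off the dimension group as you propose; the paper instead computes $K_0(C^*(F_R))$ directly as the free group on the vertex projections (there are no regular vertices, so no relations), with $[P_v]\geq k[P_w]$ for $(v,w)\in R$ obtained from $k$ of the infinitely many parallel edges. Second, and more seriously, you correctly identify the positive cones as the crux but supply no candidate description of them and, crucially, no mechanism for proving that an element is \emph{not} positive. The paper's Proposition \ref{prop:2} shows, for both algebras at once, that $K_0(A)^+$ consists precisely of the tuples $(k_v)$ with $k_v>0$ at every $\preceq$-minimal vertex of the support; the containment of this set in the cone follows from Proposition \ref{prop:1}, but the reverse containment uses an idea absent from your plan: given a minimal support vertex $v_0$ with $k_{v_0}<0$, one passes to the quotient by the gauge-invariant ideal $I_H$ of a suitably chosen hereditary and saturated set $H$ (hereditarity and saturation must be checked separately in $E_R$, where every vertex carries a loop, and in $F_R$, where no vertex is regular), lands in the same situation for a smaller acyclic graph $R'$ where the image $y$ satisfies $-y\in K_0(A')^+\smallsetminus\{0\}$, and concludes from $G^+\cap(-G^+)=\{0\}$. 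Without this quotient argument (or a substitute, e.g.\ separating the cone by states on the ordered group), your step ``show the group isomorphism respects positivity in both directions'' is a restatement of the problem rather than a proof: the asymptotics of $(I+A_R)^k$ can plausibly settle the $E_R$ cone, but the $F_R$ cone requires an independent argument, and it is precisely the matching of the two that Theorem \ref{thm:1} rests on.
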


We will prove this theorem by computing the dimension group.
For $v,w\in V$, we write $v\preceq w$ if either $v=w$ or $(v,w)\in\overline{R}$.

With an abuse of notations we will denote by  $\{P_v,S_e\}$ the Cuntz-Krieger generators of both $C^*(E_R)$ and $C^*(F_R)$ (it will be clear from the context which one we are referring to).

\begin{prop}\label{prop:1}
Let $A$ be either $C^*(E_R)^{U(1)}$ or $C^*(F_R)$. Then:
\begin{enumerate}
\item\label{en:first} $K_0(A)$ is a free $\Z$-module with basis $\{[P_v]\mid v\in V\}$.

\item\label{en:second} For all $v,w\in V$ and $k\in\Z$,
\[
v \prec w \quad\Longrightarrow \quad [P_v] \geq k[P_w] \; .
\]


\item\label{en:fourth} $[1_A]=\sum_{v\in E^0}[P_v]$.
\end{enumerate}
\end{prop}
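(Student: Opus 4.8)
The plan is to prove all three statements by computing $K_0$ of the two algebras separately, using in each case that the algebra is a unital AF algebra. Write $N$ for the vertex matrix of $R$ (so $N_{vw}=1$ iff $(v,w)\in R$ and $0$ otherwise); since $R$ is a finite acyclic graph, ordering $V$ by a linear extension of $\preceq$ makes $N$ strictly upper triangular, hence nilpotent. The vertex matrix of $E_R$ is then $M:=N+I$, which is unipotent and therefore lies in $GL_V(\Z)$.

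For the core $C^*(E_R)^{U(1)}$ I would use its standard AF presentation: it is the closure of $\bigcup_{k\ge0}\mathcal F_k$ with $\mathcal F_k=\operatorname{span}\{S_\mu S_\nu^*:|\mu|=|\nu|=k,\ t(\mu)=t(\nu)\}\cong\bigoplus_{v\in V}M_{d_{k,v}}(\C)$, where $d_{k,v}$ is the number of length-$k$ paths ending at $v$ (every vertex of $E_R$ is regular thanks to its loop, so \eqref{eq:CK3} applies throughout and the inclusions are the expected ones). Tracking a minimal projection $S_\mu S_\mu^*$ through $P_v=\sum_{s(e)=v}S_eS_e^*$ shows that the connecting map $K_0(\mathcal F_k)=\Z^V\to\Z^V=K_0(\mathcal F_{k+1})$ is $M^t$, so $K_0\bigl(C^*(E_R)^{U(1)}\bigr)=\varinjlim(\Z^V,M^t)$. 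As $M^t\in GL_V(\Z)$, every connecting map is an isomorphism, $K_0\cong\Z^V$, and the classes $[P_v]$ (the standard basis at level $0$) form a $\Z$-basis; this is item \eqref{en:first}. For $F_R$ every vertex is singular (a sink or an infinite emitter), so $F_R$ has no regular vertices, $(F_R)^0_{\mathrm{reg}}=\emptyset$; the standard computation of the $K$-theory of a graph C*-algebra (Raeburn–Szymański) then gives $K_1=\ker=0$ and $K_0=\operatorname{coker}=\Z^{V}$ with basis $\{[P_v]\}$, which is \eqref{en:first} again. Item \eqref{en:fourth} is immediate in both cases, since $1_A=\sum_{v\in V}P_v$ is a sum of orthogonal projections.

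For the order statement \eqref{en:second} on $C^*(F_R)$ I would argue directly with the parallel edges. If $(v,w)\in R$ and $S_1,S_2,\dots$ are the partial isometries attached to the (infinitely many) edges from $v$ to $w$, then by \eqref{eq:CK1}--\eqref{eq:CK2} the $S_nS_n^*$ are mutually orthogonal projections below $P_v$, each equivalent to $S_n^*S_n=P_w$; hence $\sum_{n=1}^k S_nS_n^*\le P_v$ realises $[P_v]-k[P_w]=\bigl[P_v-\sum_{n=1}^kS_nS_n^*\bigr]\ge0$ for every $k\ge1$. For general $v\prec w$ one inducts on the length of a walk $v=u_0\to u_1\to\dots\to u_\ell=w$: the edge case gives $[P_v]\ge[P_{u_1}]$, induction gives $[P_{u_1}]\ge k[P_w]$, and adding the two positive differences yields $[P_v]\ge k[P_w]$ (the case $k\le0$ being trivial since $[P_v],[P_w]\ge0$).

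The main obstacle is \eqref{en:second} for the core, where this trick fails: the equivalence $S_\mu S_\mu^*\sim P_{t(\mu)}$ is implemented by the degree-$\ell$ element $S_\mu\notin C^*(E_R)^{U(1)}$, and indeed in $K_0$ of the core $[S_\mu S_\mu^*]=(M^t)^{-\ell}[P_{t(\mu)}]\ne[P_{t(\mu)}]$ unless $t(\mu)$ is a sink of $R$. Instead I would use the description of the order on an AF inductive limit (Effros–Handelman–Shen, Elliott): under the identification $K_0\cong\Z^V$ through level $0$, positivity is detected at a finite stage, so $x\ge0$ if and only if $(M^t)^m x\in\Z_{\ge0}^V$ (entrywise) for some $m\ge0$. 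Writing $W_j(x,y)$ for the number of length-$j$ walks from $x$ to $y$ in $R$, the expansion $(M^t)^m=(I+N^t)^m=\sum_j\binom mj(N^t)^j$ gives $\bigl((M^t)^m\bigr)_{pv}=\sum_j\binom mj W_j(v,p)$. Fixing a walk $v\to w$ of length $\ell\ge1$ and prepending it injects length-$i$ walks $w\to p$ into length-$(i+\ell)$ walks $v\to p$, so $W_{i+\ell}(v,p)\ge W_i(w,p)$ and
\[
\bigl((M^t)^m\bigr)_{pv}\ \ge\ \sum_{i\ge0}\binom{m}{i+\ell}W_i(w,p),\qquad k\,\bigl((M^t)^m\bigr)_{pw}=\sum_{i\ge0}k\binom{m}{i}W_i(w,p).
\]
Because $R$ is acyclic, only finitely many $i$ (namely $0\le i\le|V|-1$) contribute, while $\binom{m}{i+\ell}/\binom{m}{i}\to\infty$ as $m\to\infty$; hence for $m$ large enough (depending on $k,\ell,|V|$) one has $\binom{m}{i+\ell}\ge k\binom mi$ for all these $i$, giving $\bigl((M^t)^m\bigr)_{pv}\ge k\bigl((M^t)^m\bigr)_{pw}$ for every $p$. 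This says $(M^t)^m\bigl([P_v]-k[P_w]\bigr)\in\Z_{\ge0}^V$, i.e. $[P_v]\ge k[P_w]$, completing \eqref{en:second} and the proposition. The delicate point is precisely this finite-stage positivity estimate, since the coefficients $W_j(v,p)-kW_j(w,p)$ are individually negative for small $j$, and only the binomial weighting, through the length shift $\ell$, rescues positivity.
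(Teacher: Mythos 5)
Your proof is correct, and on the only genuinely hard point --- part (ii) for the core --- it takes a different route from the paper. For $C^*(F_R)$, and for parts (i) and (iii), you do essentially what the paper does (the paper quotes \cite{BHRS02} and \cite[Prop.~6.10]{Dan24} for freeness, where you instead compute the Bratteli diagram of the core by hand: $\mathcal{F}_k\cong\bigoplus_{v}M_{d_{k,v}}(\C)$ with connecting maps $M^t=(I+N)^t$, which is unipotent, whence $K_0\cong\Z^V$ with basis $\{[P_v]\}$ at once). The divergence is in (ii) for $C^*(E_R)^{U(1)}$. The paper stays inside $K_0$: it introduces the loop-range classes $[Q_{v,k}]:=[S_{(v,v)}^k(S_{(v,v)}^*)^k]$ --- which are precisely the minimal-projection classes at level $k$ of your filtration --- derives $[P_v]=\sum_w(\Gamma^k)_{v,w}[Q_{w,k}]$ and $(\Gamma^k)_{v,w}\ge k$ for $(v,w)\in R$, and then runs an induction on the depth $d(v)$, whose crux is that $[P_{w'}]$ is an upper bound for $\mathrm{Span}\{[P_w]:w\succ w'\}$, hence for the span of the relevant $[Q_{w,k}]$, so that the error term $k\xi$ can be absorbed. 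You replace that induction by the standard order description of an AF inductive limit (positivity is detected at a finite stage of the Bratteli diagram), which reduces (ii) to the entrywise inequality $\bigl((M^t)^m\bigr)_{pv}\ge k\bigl((M^t)^m\bigr)_{pw}$ for some $m$; your walk-prepending injection $W_{i+\ell}(v,p)\ge W_i(w,p)$, combined with $\binom{m}{i+\ell}/\binom{m}{i}\to\infty$ and the fact that acyclicity truncates the sums to $0\le i\le |V|-1$, then closes the estimate, and I checked it does so correctly (it also covers all $v\prec w$ and all $k\in\Z$ uniformly, whereas the paper first reduces to $(v,w)\in R$, $k\ge1$). Both proofs ultimately rest on the same walk-counting in $E_R$ via powers of $I+N$, but yours is self-contained modulo textbook AF theory --- no appeal to \cite{Dan24} --- and actually proves the slightly stronger statement that $[P_v]-k[P_w]$ becomes entrywise nonnegative at a finite level of the diagram; the paper's intrinsic argument, in exchange, never needs the explicit identification of the core's Bratteli diagram or the inductive-limit description of the order, only the two imported $K_0$-identities.
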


\begin{proof}
First note that \ref{en:fourth} is trivial. We now prove the first two points.
Since $k[P_w]\geq (k-1)[P_w]$ for all $w\in V$ and $k\in\Z$, it is enough to prove \ref{en:second} for $k\geq 1$. Also, by transitivity it is enough to prove \ref{en:second} when $(v,w)\in R$.

\medskip
Let us start with the case $A=C^*(F_R)$. 
Since $A$ is an amplified graph we can obtain (i) directly from \cite[Lemma 6.2]{BHRS02} which covers the more general case. We do however include a proof for our case since it simplifies significantly. For any unital graph C*-algebra $C^*(G)$, $K_0$ is the quotient of the free group generated by vertex projections modulo the relations
\[
[P_v]=\sum_{e\in s^{-1}(v)}[P_{r(e)}] \qquad\forall\;v\in G^0_{\mathrm{reg}},
\]
where $G^*_{\mathrm{reg}}$ is the set of regular vertices.
If $v,w\in G^0$ are connected by (at least) $k$ arrows $e_1,\ldots,e_k$, then
\[
[P_v] \geq \sum_{i=1}^k[S_{e_i}S_{e_i}^*]=\sum_{i=1}^k[S_{e_i}^*S_{e_i}]=k[P_w] .
\]
In particular, if $G=F_R$ is an amplified graph, since there are no regular vertices,
$K_0$ is freely generated by the vertex projections and $[P_v] \geq k[P_w]$ for all $(v,w)\in R$ and $k\geq 1$.

\medskip

Now we pass to the case $A=C^*(E_R)^{U(1)}$.
Put $n:=|V|$, let $\Gamma=(\Gamma_{v,w})$ be the adjacency matrix of $E_R$ and $\widetilde{\Gamma}$ be the adjacency matrix of $R$. Note that
$\Gamma=I+\widetilde{\Gamma}$ where $I$ is the identity matrix.
Recall that $(\widetilde{\Gamma}^k)_{v,w}$ is the number of directed walks in $R$ of length $k$ from $v$ to $w$. Any directed walk of length $k\geq n$ passes twice through at least one vertex, which is impossible since $R$ is a directed acyclic graph. We deduce that $\widetilde{\Gamma}^n=0$, and so $\Gamma$ is invertible with inverse
\begin{equation}\label{eq:inverse}
\Gamma^{-1}=\sum_{k=0}^{n-1}(-\widetilde{\Gamma})^k .
\end{equation}
It follows from Proposition 6.10 in \cite{Dan24} that $K_0(A)$ is a free $\Z$-module with basis $\{[P_v]\mid v\in V\}$. For $k\geq 1$ and $v\in V$, put $Q_{v,k}:=S^k_{(v,v)}(S^*_{(v,v)})^k$ (recall that there is a loop $(v,v)\in E_R^1$ at every vertex $v$).
For all $v\in V$ and $k\geq 1$ one has \cite[Lemma 6.9]{Dan24}:
\[
[Q_{v,k}]=\sum_{w\in V}\Gamma_{v,w}[Q_{w,k+1}] .
\]
Since $P_v=Q_{v,0}$, this implies
\begin{equation}\label{eq:Pv}
[P_v]=\sum_{w\in V}(\Gamma^k)_{v,w}[Q_{w,k}] ,
\end{equation}
for all $v\in V$ and $k\geq 1$. Note that for every $(v,w)\in R$, by concatenating loops at $v$ and at $w$ we can form (at least) $k$ distinct paths in $E_R$ from $v$ to $w$. Hence
\begin{equation}\label{eq:Gammak}
(\Gamma^k)_{v,w}\geq k \qquad\forall\;(v,w)\in R,k\geq 1,
\end{equation}
and
\begin{equation}\label{eq:firstineq}
[P_v]\geq k[Q_{w,k}] \qquad\forall\;(v,w)\in R,k\geq 1.
\end{equation}

By inverting \eqref{eq:Pv}, we get
\[
[Q_{v,k}]=\sum_{w\in V}(\Gamma^{-k})_{v,w}[P_w] .
\]
It follows from \eqref{eq:inverse} that, for all $k\geq 1$, $\Gamma^{-k}$ is a sum of powers of $\widetilde{\Gamma}^{k'}$, with $0\leq k'<n$. It follows that $(\Gamma^{-k})_{v,w}=0$ unless $v\preceq w$. 
Thus, for each $k\geq 1$, $[Q_{v,k}]$ is an integer combination of $\{[P_w] : v\preceq w\}$. Then, for all $w'\succ v$ and all $k\geq 1$, one also has
\[
[Q_{w',k}] \in
\mathrm{Span}\big\{[P_w]\;\big|\;w\succeq w'\big\} 
\subseteq\mathrm{Span}\big\{[P_w]\;\big|\;w\succ v\big\} .
\]
Therefore
\begin{equation}\label{eq:span}
\mathrm{Span}\big\{[Q_{w',k}]\;\big|\;k\geq 1, w'\succ v\big\}\subseteq
\mathrm{Span}\big\{[P_w]\;\big|\;w\succ v\big\} .
\end{equation}

Define the \emph{depth} $d(v)$ of $v\in V$ as the maximum length of a directed walk in $R$ with source $v$ (this is an integer less than $|V|$). Note that, if $(v,w)\in R$, then $d(v)\geq d(w)+1$. We will show that
\begin{equation}\label{eq:enough}
[P_v]\geq k[P_w] \qquad\text{for all }(v,w)\in R,k\geq 1,
\end{equation}
by induction on $d(v)$.

If $d(v)=1$, then $w$ is a sink in $R$ and there is only one directed walk of length $k$ in $E_R$ emitted from $w$ (given by $k$ loops). It follows from \eqref{eq:Pv} that $[P_w]=[Q_{w,k}]$ independently of $k$, and so \eqref{eq:firstineq} reduces to \eqref{eq:enough}.

Next, let $N\geq 1$ and assume that \eqref{eq:enough} holds
for all $k\geq 1$ and all $(v,w)\in R$ with $d(v)\leq N$.
Let $(v',w')\in R$ with $d(v')=N+1$.
By inductive hypothesis $[P_v]\geq k [P_w]$
for all $(v,w)\in R$ such that $w'\preceq v\prec w$, and therefore
for all $v,w\in V$ such that $w'\preceq v\prec w$ (by transitivity).

Let $n_{w'}$ is the cardinality of the set $\{w|w\succ w'\}$. By the discussion above, for all integers $(k_w)_{w\succ w'}$ one has
$n_{w'}k_w[P_w]\leq [P_{w'}]$ and then
\[
\sum_{w\succ w'}k_w[P_w]\leq \frac{1}{n_{w'}}\sum_{w\succ w'}[P_{w'}]=[P_{w'}].
\]
This proves that $[P_{w'}]$ is an upper bound for the set $\mathrm{Span}\big\{[P_w]\;\big|\;w\succ w'\big\}$. Because of \eqref{eq:span}, $[P_{w'}]$ is also an upper bound for the set $\mathrm{Span}\big\{[Q_{w,k}]\;\big|\;k\geq 1,w\succ w'\big\}$.

To conclude the proof, from \eqref{eq:Pv} and \eqref{eq:Gammak} we get
\[
[P_{v'}]\geq [Q_{v',k}]+k[Q_{w',k}]
\]
and
\[
[P_{w'}]=[Q_{w',k}]+\xi ,
\]
where
\[
\xi \in \mathrm{Span}\big\{[Q_{w,k}]\;\big|\;k\geq 1,w\succ w'\big\} .
\]
Thus,
\[
[P_{v'}]-(k-1)[P_{w'}]\geq 
[Q_{v',k}]+k[Q_{w',k}]-k([Q_{w',k}]+\xi)+[P_{w'}]
=[Q_{v',k}]+[P_{w'}]-k\xi .
\]
Since $[P_w']$ is an upper bound for the span discussed above, we have $[P_{w'}]\geq k\xi$. Hence
$[P_{v'}]-(k-1)[P_{w'}]\geq 0$ for all $k\geq 1$, which proves the inductive step.
\end{proof}


For $A$ either $C^*(E_R)^{U(1)}$ or $C^*(F_R)$, using the bijection
\[
\Z^V\ni (k_v)_{v\in V} \mapsto \sum_{v\in V}k_v[P_v] \in K_0(A) ,
\]
we will identify $K_0(A)$ with $\Z^V$ and each element $\sum_{v\in V}[P_v]$ with the corresponding tuple $(k_v)_{v\in V}$. Under this identification $[1_A]$ becomes the constant tuple made of all $1$'s. We now describe $K_0(A)^+$.

If $x:=(k_v)_{v\in V}\in\Z^V$, we call \emph{support} of $x$ the set 
$\mathrm{Supp}(x):=\{v\in V\mid k_v\neq 0\}$.
Given a non-empty finite subset $X$ of a poset, we denote by $X_{\min}$ the collection of all minimal elements in $X$. Recall that this is an \emph{antichains}, that is, no two different minimal elements of $X$ are comparable.
In particular, $\mathrm{Supp}(x)_{\min}$ is the set of minimal elements in $\mathrm{Supp}(x)$ (for the partial order $\preceq$).

\begin{prop}\label{prop:2}
For $A$ either $C^*(E_R)^{U(1)}$ or $C^*(F_R)$, one has
\begin{equation}\label{eq:rhs}
K_0(A)^+=\big\{ x=(k_v)_{v\in V}\;\big|\;k_v>0\;\forall\;v\in\mathrm{Supp}(x)_{\min} \big\} .
\end{equation}
\end{prop}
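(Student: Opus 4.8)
The plan is to identify $K_0(A)$ with $\Z^V$ as in the statement and prove the description of $K_0(A)^+$ by a double inclusion, writing $C$ for the right-hand side of \eqref{eq:rhs}. The only input I will need about the order, beyond parts \ref{en:first} and \ref{en:second} of Proposition~\ref{prop:1}, is the \emph{upper bound property} already extracted in the proof of that proposition: for every $v\in V$ and every $\xi\in\mathrm{Span}_\Z\{[P_w]\mid w\succ v\}$ one has $[P_v]\geq\xi$. This was derived purely from \ref{en:second} together with $[P_w]\geq 0$ and unperforation of the dimension group, so it holds for both choices of $A$ (both being AF).

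For the inclusion $C\subseteq K_0(A)^+$, I would take $x=(k_v)_v\in C$ and set $M:=\mathrm{Supp}(x)_{\min}$. For each $w\in\mathrm{Supp}(x)\setminus M$ I choose a minimal element $v(w)\in M$ with $v(w)\prec w$ (such an element exists and lies in $M$, since any $\preceq$-minimal support element below $w$ is minimal in the whole support). Grouping the terms of $x$ according to this choice gives
\[
x=\sum_{u\in M}\Big(k_u[P_u]+\textstyle\sum_{w:\,v(w)=u}k_w[P_w]\Big),
\]
and each summand is positive: the sum $\sum_{w:\,v(w)=u}k_w[P_w]$ lies in $\mathrm{Span}_\Z\{[P_w]\mid w\succ u\}$, so by the upper bound property $[P_u]+\sum_{w:\,v(w)=u}k_w[P_w]\geq 0$, and since $k_u\geq 1$, adding the positive element $(k_u-1)[P_u]$ keeps the summand positive. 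Hence $x\geq 0$. This direction is routine once the upper bound property is in hand.

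For the reverse inclusion $K_0(A)^+\subseteq C$, let $x\geq 0$ and let $v_0\in\mathrm{Supp}(x)_{\min}$; I must show $k_{v_0}>0$. The idea is to quotient by a gauge-invariant ideal so as to ``read off'' the coefficient at $v_0$. Put $D:=\{w\in V\mid w\preceq v_0\}$ and $H:=V\setminus D=\{w\mid w\not\preceq v_0\}$; then $H$ is hereditary, and it is saturated ($E_R$ has a loop at every vertex, while $F_R$ has no regular vertex at all), so $I_H$ is a gauge-invariant ideal. Passing to the quotient and taking $U(1)$-fixed points — which commutes with quotients by invariant ideals — yields a positive homomorphism $q_*\colon K_0(A)\to K_0(A_D)$ with $q_*[P_w]=[P_w]$ for $w\in D$ and $q_*[P_w]=0$ otherwise, where $A_D$ is the algebra of the same type built from the restricted relation $R_D:=R\cap(D\times D)$. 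Since $v_0$ is minimal in $\mathrm{Supp}(x)$ and $D$ is the down-set of $v_0$, the only surviving basis vector is $[P_{v_0}]$, so $q_*x=k_{v_0}[P_{v_0}]\geq 0$. As Proposition~\ref{prop:1}\ref{en:first} applies to $A_D$, $[P_{v_0}]$ is a nonzero basis element; antisymmetry of the order on the dimension group then forces $k_{v_0}>0$ (were $k_{v_0}<0$ we would have $k_{v_0}[P_{v_0}]\in K_0(A_D)^+\cap(-K_0(A_D)^+)=\{0\}$, contradicting freeness).

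The main obstacle is the precise identification of the quotient $A_D$ in the second inclusion. For $A=C^*(E_R)^{U(1)}$ the graph $E_R$ is row-finite, so $C^*(E_R)/I_H\cong C^*(E_R\smallsetminus H)=C^*(E_{R_D})$ $U(1)$-equivariantly by \cite[Cor.~3.5]{BHRS02}, and $A_D=C^*(E_{R_D})^{U(1)}$ is exactly of the form of Proposition~\ref{prop:1}. For $A=C^*(F_R)$ the graph is not row-finite (every non-sink is an infinite emitter), so the quotient description is more delicate and must be obtained from the ideal structure of amplified graph C*-algebras \cite{ERS12,BHRS02}; the expected outcome is $C^*(F_R)/I_H\cong C^*(F_{R_D})$, again covered by Proposition~\ref{prop:1}. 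Verifying this identification — or, alternatively, constructing $q_*$ directly on $K_0$ from the presentation of $K_0(C^*(F_R))$ used in the proof of Proposition~\ref{prop:1} — is the step I expect to require the most care.
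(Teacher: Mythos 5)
Your proof is correct, and its skeleton is the same as the paper's: the easy inclusion via the upper-bound property $[P_v]\geq\xi$ for $\xi\in\mathrm{Span}\{[P_w]\mid w\succ v\}$ (which, as you note, follows from Proposition \ref{prop:1}(ii) plus unperforation of the dimension group of an AF algebra --- a point you make more explicitly than the paper, which spells out this bound only inside the $E_R$ induction), and the hard inclusion via a quotient by a gauge-invariant ideal $I_H$ for a hereditary saturated $H$, Proposition \ref{prop:1} applied to the restricted relation, and $G^+\cap(-G^+)=\{0\}$. The one genuine difference is the choice of $H$: the paper takes $H$ to be the up-closure of $\mathrm{Supp}(x)_{\min}\smallsetminus\{v_0\}$, so that $v_0$ becomes the minimum of the surviving support, and then reuses the easy inclusion in the quotient to conclude $-y\in K_0(A')^+$ while $y\neq 0$; you take $H=\{w\mid w\not\preceq v_0\}$, which crushes the image down to the single term $k_{v_0}[P_{v_0}]$ and makes the endgame immediate from freeness, with no appeal to the easy inclusion in the quotient algebra. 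Both choices require the same quotient identification, and the step you flag as delicate closes in one line: in an amplified graph, an infinite emitter $v\notin H$ emits either zero or infinitely many edges into $F_R^0\smallsetminus H$, so $H$ admits no breaking vertices, and the description of quotients by gauge-invariant ideals in \cite{BHRS02} then gives $C^*(F_R)/I_H\cong C^*(F_{R_D})$ with no extra vertices --- which is exactly how the paper handles it (it likewise asserts this isomorphism with a citation to \cite{BHRS02}, having noted in the preliminaries that the quotient graph is ``more involved'' only in general).
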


\begin{proof}
Call $K_R$ the set on the right hand side of \eqref{eq:rhs}.
Since $[P_v]$ is an upper bound for the set $\mathrm{Span}\big\{[P_w]\;\big|\;w\succ v\big\}$, the inclusion $K_R\subseteq K_0(A)^+$ is obvious by using Proposition~\ref{prop:1}. 
We need to show the opposite inclusion.

Let
\[
x=\sum_{v\in V}k_v[P_v] \in K_0(A) ,
\]
and assume that there exists $v_0\in \mathrm{Supp}(x)_{\min}$ such that
$k_{v_0}<0$. We now show that $x\notin K_0(A)^+$.

Let $H$ be the set of all $v\in V$ such that $v\succeq w$ for some $w\in\mathrm{Supp}(x)_{\min}\smallsetminus\{v_0\}$.  $H$ has the following properties 
\begin{itemize}
\item $v_0\notin H$ since minimal elements are incomparable. 
\item $v_0$ is a minimum for
$\mathrm{Supp}(x)\smallsetminus H$ by construction. 
\item $H$ is an hereditary subset both in $E_R$ and in $F_R$ which follows by construction.
\item $H$ is saturated both in $E_R$ and in $F_R$: in $E_R$ because every vertex emits a loop, and in $F_R$ because there are no regular vertices. 
\end{itemize}

Call $R'$ the restriction of $R$ to $V':=V\smallsetminus H$, $E_{R'}$ and $F_{R'}$ the corresponding graphs in \eqref{eq:E} and \eqref{eq:F}, and let $A'$ be either $C^*(E_{R'})^{U(1)}$ or $C^*(F_{R'})$. In the former case, there is a $U(1)$-equivariant surjective *-homomorphism $C^*(E_{R})\to C^*(E_{R'})\cong C^*(E_R)/I_H$,
where $I_H$ is the gauge-invariant ideal in $C^*(E_{R})$ corresponding to $H$,
and this map induces a (surjective) *-homomorphism $C^*(E_{R})^{U(1)}\to C^*(E_{R'})^{U(1)}$.
In the latter case, there is a (surjective) *-homomorphism $C^*(F_{R})\to C^*(F_{R'})\cong C^*(F_R)/J_H$, where $J_H$ is the gauge-invariant ideal in $C^*(F_{R})$ corresponding to $H$. In both cases, we get a *-homomorphism $\pi:A\to A'$ which sends vertex projections to vertex projections and maps the positive cone of $K_0(A)$ to the positive cone of $K_0(A')$.

Clearly $R'$ is still a directed acyclic graph. Let $y$ be the image of the element $x$ above in $K_0(A')$. Note that $-y\in K_0(A')^+$, since $ \mathrm{Supp}(y)_{\min}=\{v_0\}$ and $-k_{v_0}>0$.
If $(G,G^+)$ is any ordered abelian group,
\[
G^+\cap (-G^+)=\{0\} .
\]
Since $-y\in K_0(A')^+$ and $y\neq 0$, then $y\notin K_0(A')^+$. It follows that $x\notin K_0(A)^+$.
\end{proof}

\begin{proof}[Proof of Theorem \ref{thm:1}]
It follows from Propositions \ref{prop:1} and \ref{prop:2} that $C^*(E_R)^{U(1)}$ and $C^*(F_R)$ have the same dimension group. By the classification of unital AF-algebras \cite{Ell76} we obtain $C^*(E_R)^{U(1)}\cong C^*(F_R)$. 
\end{proof}

\section{Applications}\label{sec:4}

\subsection{The dimension group of $\C P^{n-1}_q$}

Let us go back to the graphs in \eqref{eq:commdiag}.
Let $\overline{R}$ be the standard strict total order on $V:=\{1,\ldots,n\}$ and $R$ its transitive reduction. Thus,
 $E_{\overline{R}}=L_{2n-1}$ and $F_{\overline{R}}=F_{n-1}$ are the graphs in Figures \ref{fig:graphSnq} and \ref{fig:graphCPnq},
 while $E_R=\widetilde{L}_{2n-1}$ and $F_R=\widetilde{F}_{n-1}$ are the graphs in Figures \ref{fig:sphereB} and \ref{fig:CPnqB}. The relation $\preceq$
 coincides is the standard total order on $V$, and from Prop.~\ref{prop:2}
we deduce the following corollary.

\begin{cor}\label{cor:E}
For $A$ either one of the algebras $C^*(L_{2n-1})^{U(1)}$, $C^*(F_{n-1})$,
$C^*(\widetilde{L}_{2n-1})^{U(1)}$ and $C^*(\widetilde{F}_{n-1})$,
$K_0(A)\cong\Z^n$ with basis given by the classes of the vertex projections
$\{[P_1],\ldots,[P_n]\}$, and
$K_0(A)^+$ is the set of all integer combinations
\[
a_1[P_1]+\ldots+a_n[P_n]
\]
such that the first non-zero coefficient from the left (if any) is positive.
In particular, the four C*-algebras above are all isomorphic to $C(\C P^{n-1}_q)$
(where $0\leq q<1$).
\end{cor}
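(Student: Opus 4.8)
The plan is to read off the three assertions from Propositions~\ref{prop:1} and~\ref{prop:2} applied to the two relations in play, and then to close with the classification of AF algebras. All four algebras are covered by those propositions: $C^*(L_{2n-1})^{U(1)}=C^*(E_{\overline{R}})^{U(1)}$ and $C^*(F_{n-1})=C^*(F_{\overline{R}})$ arise from the total order $\overline{R}$, while $C^*(\widetilde{L}_{2n-1})^{U(1)}=C^*(E_{R})^{U(1)}$ and $C^*(\widetilde{F}_{n-1})=C^*(F_{R})$ arise from its transitive reduction $R$. For each of them, parts~\ref{en:first} and~\ref{en:fourth} of Proposition~\ref{prop:1} give at once $K_0(A)\cong\Z^n$ with basis $\{[P_1],\ldots,[P_n]\}$ and order unit $[1_A]=\sum_v[P_v]$, which is the first assertion of the corollary.

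Next I would specialize the positive-cone description \eqref{eq:rhs} to a total order. The partial order $\preceq$ occurring there depends only on the transitive closure, and both $R$ and $\overline{R}$ have the same closure, namely the standard total order $1\prec 2\prec\cdots\prec n$. Since every nonempty finite subset of a totally ordered set has a unique minimum, for $x=(k_1,\ldots,k_n)$ the set $\mathrm{Supp}(x)_{\min}$ is the singleton $\{i_0\}$ with $i_0=\min\{i:k_i\neq 0\}$. The defining condition ``$k_v>0$ for all $v\in\mathrm{Supp}(x)_{\min}$'' therefore reduces to ``$k_{i_0}>0$'', i.e.\ the first nonzero coefficient from the left is positive, with $x=0$ handled by the empty-support convention. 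This yields the stated description of $K_0(A)^+$ simultaneously for all four algebras.

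At this point all four algebras carry the same dimension group $(\Z^n,K^+,(1,\ldots,1))$, and each is a unital AF algebra: the graphs $L_{2n-1},\widetilde{L}_{2n-1}$ are finite with no sinks, so their cores are AF, while $F_{n-1},\widetilde{F}_{n-1}$ have no directed cycles. By Elliott's classification~\cite{Ell76} they are pairwise isomorphic, and since $C(\C P^{n-1}_q)\cong C^*(L_{2n-1})^{U(1)}$ by the Hong--Szyma\'nski correspondence~\cite{HS02} recalled in the introduction, the common isomorphism class is $C(\C P^{n-1}_q)$ for every $0\leq q<1$.

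The point worth stressing is conceptual rather than computational: it is not evident that $C^*(L_{2n-1})^{U(1)}\cong C^*(\widetilde{L}_{2n-1})^{U(1)}$, precisely because the ambient isomorphism $C^*(L_{2n-1})\cong C^*(\widetilde{L}_{2n-1})$ is not gauge-equivariant and so does not descend to the cores (cf.\ the discussion around \eqref{eq:commdiag}). The corollary obtains this isomorphism indirectly, by observing that passing from $\overline{R}$ to its transitive reduction $R$ does not alter the order $\preceq$, hence leaves the dimension group untouched, and then appealing to AF classification.
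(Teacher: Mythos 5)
Your proposal is correct and follows essentially the same route as the paper: both deduce the corollary by applying Propositions~\ref{prop:1} and~\ref{prop:2} to the total order $\overline{R}$ and its transitive reduction $R$ (whose common transitive closure makes $\preceq$ the standard total order, so $\mathrm{Supp}(x)_{\min}$ is the singleton given by the leftmost nonzero coefficient), and then invoke Elliott's classification together with the Hong--Szyma\'nski identification of $C(\C P^{n-1}_q)$. Your explicit spelling-out of the singleton-minimum reduction and of the AF-ness of all four algebras merely makes the paper's terse deduction more detailed, not different.
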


The isomorphism between $C^*(L_{2n-1})^{U(1)}$ and $C^*(F_{n-1})$ was discovered 
in \cite[Sect.~4.3]{HS02}. 

\begin{cor}
For $n\geq 2$, the dimension groups of $\C P^{n-1}$ and $\C P^{n-1}_q$
(with $0\leq q<1$) are not isomorphic.
\end{cor}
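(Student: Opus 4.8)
The plan is to distinguish the two dimension groups by an intrinsic order-theoretic invariant, namely whether the order is \emph{total}. By Corollary~\ref{cor:E}, the dimension group of $\C P^{n-1}_q$ is $\Z^n$ equipped with the lexicographic order: a nonzero tuple is positive precisely when its first nonzero coefficient (from the left) is positive. In particular every nonzero element is comparable to $0$, so this ordered abelian group is totally ordered. Since any isomorphism of ordered abelian groups preserves totality of the order, and a dimension-group isomorphism is in particular such an isomorphism, it suffices to show that the dimension group of the classical $\C P^{n-1}$ is \emph{not} totally ordered for $n\geq 2$.

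For the classical side I would use the standard ring description $K_0(C(\C P^{n-1}))=K^0(\C P^{n-1})\cong\Z[t]/(t^n)$, where $t=[H]-[1]$ with $H$ the hyperplane bundle, together with the fact recalled in Section~\ref{sec:2} that the positive cone consists of the classes of genuine vector bundles. The key observation is that the rank homomorphism $\rho\colon K^0(\C P^{n-1})\to\Z$ (pullback along the inclusion of a point) sends every such class to a non-negative integer, and that the only positive element of rank $0$ is $0$ itself: if $x=[P]$ for a projection $P$ over $C(\C P^{n-1})$ and $\rho(x)=0$, then the fiberwise trace of $P$ vanishes on the connected space $\C P^{n-1}$, forcing $P=0$.

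Next I would exhibit an incomparable pair. Both $[1]$ and $[H]$ lie in the positive cone, being classes of line bundles, while their difference $[H]-[1]=t$ has rank $0$ and is nonzero in $\Z[t]/(t^n)$ precisely because $n\geq 2$. By the observation above, neither $t$ nor $-t$ belongs to the positive cone, so $[1]$ and $[H]$ are incomparable and the order on $K^0(\C P^{n-1})$ is not total. Comparing with the totally ordered dimension group of $\C P^{n-1}_q$ then yields the desired non-isomorphism.

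I expect the main obstacle to be the correct choice of invariant rather than any computation: the two groups are both free abelian of rank $n$, so the distinction lies entirely in the order structure, and one must resist comparing the multiplicative (semiring) structure, which is present on the classical $K^0$ but is not intrinsic to the quantum dimension group qua ordered group. Once totality is identified as the right invariant, the remaining work is the routine verification, via the rank homomorphism, that $\pm t$ fails to be positive.
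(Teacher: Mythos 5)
Your proof is correct, and it takes a genuinely different route from the paper's. The paper fixes an arbitrary homomorphism of dimension groups $f\colon K_0(C(\C P^{n-1}_q))\to K^0(\C P^{n-1})$ and shows directly that $[P_n]\in\ker f$: since $[P_n]$ and $1-k[P_n]$ are both positive on the quantum side by Cor.~\ref{cor:E}, and a nonzero positive class on the classical side has constant term at least $1$ (point~(i) of the proposition in App.~\ref{app:CPn}), writing $f([P_n])=a_0+O(x)$ forces $a_0\geq 1$, and then $f(1-2[P_n])=1-2a_0+O(x)$ would be a positive class with negative constant term --- a contradiction, so $f$ kills $[P_n]$ and cannot be injective. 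You instead isolate an intrinsic order-theoretic invariant, totality: Cor.~\ref{cor:E} makes the quantum dimension group lexicographically, hence totally, ordered, while on the classical side the rank homomorphism shows that neither $t=[H]-[1]\neq 0$ nor $-t$ is positive, so $[1]$ and $[H]$ are incomparable. Both arguments ultimately rest on the same fact --- a nonzero positive class in $K^0(\C P^{n-1})$ has strictly positive rank, which is exactly point~(i) of the appendix proposition, and your fiberwise-trace argument over the connected base is a correct re-derivation of it --- but they leverage it differently. The paper's version yields slightly more information (every homomorphism of dimension groups annihilates $[P_n]$, in the spirit of $[P_n]$ being an ``infinitesimal'' that is invisible classically), whereas yours is cleaner as a pure non-isomorphism proof since it never analyzes an arbitrary homomorphism. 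One limitation of your invariant worth noting: the quantum order is total only because the underlying poset on $\{1,\ldots,n\}$ is a total order, so for the general acyclic graphs $R$ of Prop.~\ref{prop:2} (where $\mathrm{Supp}(x)_{\min}$ may contain several incomparable elements) the quantum dimension group is not totally ordered, and an argument in the paper's infinitesimal style would be needed instead.
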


\begin{proof}
Recall that $K^0(\C P^{n-1})=\Z[x]/(x^n)$, with $x:=1-[\mathcal{L}_1]$ the Euler class of the Hopf line bundle, and any non-zero polynomial in $K^0(\C P^{n-1})^+$ has the constant term which is strictly positive (cf.~App.~\ref{app:CPn}). Let $f:K_0(C(\C P^{n-1}_q))\to K^0(\C P^{n-1})$ be a homomorphism of dimension groups.
Assume, by contradiction, that $f([P_n])\neq 0$.
Since $[P_n]$ is positive, we deduce that
\[
f([P_n])=a_0+O(t) ,
\]
where $a_0\geq 1$.
Since $1-k[P_n]=[P_1]+\ldots+[P_{n-1}]-(k-1)[P_n]$ is also positive (Cor.~\ref{cor:E}), its image
$f(1-k[P_n])=1-ka_0+O(t)$ must be a positive element in $K^0(\C P^{n-1})$, for all $k\geq 0$. But $1-ka_0$ is negative already for $k=2$ (since $a_0\geq 1$), and we get a contradiction.
Thus, $[P_n]$ is in the kernel of $f$. In particular, $f$ cannot be an isomorphism.
\end{proof}

The geometric reason why the dimension groups of $\C P^{n-1}_q$ and $\C P^{n-1}$ are not isomorphic is that there exists non-trivial ``rank $0$ vector bundles'' on $\C P^{n-1}_q$ (for $0\leq q<1$).

\medskip

\begingroup
For $n=2$, we can give an explicit isomorphism $C^*(F_1)\to C^*(L_3)^{U(1)}$. This is a special case of Prop.~\ref{prop:qlens} in the next section.

\subsection{Quantum lens spaces and teardrops}

Let $r\in\Z_+$, $V=\{0,1,\ldots,r\}$ and $R$ the Hasse diagram of the particular point topology on $V$. Explicitly, $(i,j)\in R$ if and only if $i=0$ and $j\geq 1$ (i.e.~a non-empty subset of $V$ is open if and only if it contains $0$).
Then, $E_R$ is the graph $L_3^{r;1,r}$ in Figure \ref{fig:lens}
and $F_R$ is the graph in Figure \ref{fig:teardrop}.
The graph C*-algebra $C^*(L_3^{r;1,r})$ describes the quantum lens space with weight vector $(1,r)$ (see \cite[Example 2.1]{BS18}). Its AF core is isomorphic to the graph C*-algebra of the graph $F_1^{1,r}$ in Figure \ref{fig:teardrop}, which as proved first in \cite{BS18} describes the quantum weighted projective space (or quantum ``teardrop'') $\mathbb{W}P_q^1(1,r)$ with weight vector $(1,r)$.

\begin{figure}[t]
\begin{tikzpicture}[inner sep=3pt]

\node (1) {};
\node (4) [below of=1] {};
\node (3) [left of=4] {};
\node (2) [left of=3] {};
\node (5) [right of=4] {};
\node (6) [right of=5] {};

\draw (5) node {$\ldots$};
\filldraw (1) circle (0.06);
\filldraw (2) circle (0.06);
\filldraw (3) circle (0.06);
\filldraw (4) circle (0.06);
\filldraw (6) circle (0.06);

\path[freccia] (1) edge[ciclo] (1) (1) edge (2) (1) edge (3) (1) edge (4) (1) edge (6);
\path[freccia,ciclo,rotate=180] (2) edge (2) (3) edge (3) (4) edge (4) (6) edge (6);

\end{tikzpicture}
\caption{The graph $L_3^{r;1,r}$.}
\label{fig:lens}

\vspace{5mm}

\begin{tikzpicture}[inner sep=3pt]

\node (1) {};
\node (4) [below of=1] {};
\node (3) [left of=4] {};
\node (2) [left of=3] {};
\node (5) [right of=4] {};
\node (6) [right of=5] {};

\draw (5) node {$\ldots$};
\filldraw (1) circle (0.06);
\filldraw (2) circle (0.06);
\filldraw (3) circle (0.06);
\filldraw (4) circle (0.06);
\filldraw (6) circle (0.06);

\path[freccia] (1) edge node[fill=white,sloped] {$\infty$} (2) (1) edge node[fill=white,sloped] {$\infty$} (3) (1) edge node[fill=white,sloped] {$\infty$} (4) (1) edge node[fill=white,sloped] {$\infty$} (6);

\end{tikzpicture}

\medskip

\caption{The graph $F_1^{1,r}$ of a quantum teardrop.}
\label{fig:teardrop}
\end{figure}

We can give an explicit isomorphism $C^*(F_1^{1,r})\to C^*(L_3^{r;1,r})^{U(1)}$. This is similar to the map in the proof of \cite[Prop.~3.1]{BS18}, which gives an isomorphism between $C^*(F_1^{1,r})$ and a suitable fixed point subalgebra of the quantum sphere algebra $C(S^3_q)$.

In the following, for $1\leq i\leq r$, we denote by $(e_{i,n})_{n\in\N}$ the infinitely many arrows of $F_1^{1,r}$ from the vertex $0$ to the vertex $i$;
we denote by $f_i$ the arrow of $L_3^{r;1,r}$ from the vertex $0$ to the vertex $i$ and, for $0\leq j\leq r$, we denote by $\ell_j$ the loop at $j$ in the graph $L_3^{r;1,r}$.

\begin{prop}\label{prop:qlens}
In the above notations, an isomorphism
\[
\phi:C^*(F_1^{1,r})\longrightarrow C^*(L_3^{r;1,r})^{U(1)}
\]
is given on generators by
\begin{alignat*}{2}
\phi(P_j) &:=P_j &&  \forall\; 0\leq j\leq r ,
\\
\phi(S_{e_{i,n}}) &:=S_{\ell_0}^n S_{f_i}(S_{\ell_i}^*)^{n+1} \qquad && \forall\;1\leq i\leq r,n\in\N.
\end{alignat*}
\end{prop}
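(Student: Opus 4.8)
The plan is to verify directly that the proposed assignment $\phi$ sends the Cuntz--Krieger generators of $C^*(F_1^{1,r})$ to a Cuntz--Krieger $F_1^{1,r}$-family inside the core $C^*(L_3^{r;1,r})^{U(1)}$, and then to exhibit an inverse on generators. First I would check that the candidate images land in the core: each $S_{\ell_0}^n S_{f_i}(S_{\ell_i}^*)^{n+1}$ has gauge-degree $n+1-(n+1)=0$, so it is indeed gauge-invariant, and the $P_j$ have degree $0$ trivially. So $\phi$ is at least well-defined as a map into the core.

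Next I would verify the three Cuntz--Krieger relations \eqref{eq:CK1}, \eqref{eq:CK2}, \eqref{eq:CK3} for the images. The projections $\{\phi(P_j)\}_{0\le j\le r}$ are mutually orthogonal because the $\{P_j\}$ already are in $C^*(L_3^{r;1,r})$. For the partial isometries $T_{i,n}:=\phi(S_{e_{i,n}})$, the key computation is $T_{i,n}^*T_{i,n}=S_{\ell_i}^{\,n+1}S_{f_i}^*(S_{\ell_0}^*)^n S_{\ell_0}^n S_{f_i}(S_{\ell_i}^*)^{n+1}$; using $S_{\ell_0}^*S_{\ell_0}=P_0$, $S_{f_i}^*S_{f_i}=P_i$, and $S_{\ell_i}^*S_{\ell_i}=P_i$ together with the commutation/absorption of these source and range projections, this should telescope to $P_i=\phi(P_{t(e_{i,n})})$, giving \eqref{eq:CK1}. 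For orthogonality of ranges and \eqref{eq:CK2}, I would compute $T_{i,n}T_{i,n}^*=S_{\ell_0}^n S_{f_i}(S_{\ell_i}^*)^{n+1}S_{\ell_i}^{\,n+1}S_{f_i}^*(S_{\ell_0}^*)^n$, which reduces using $S_{\ell_i}^*S_{\ell_i}=P_i$ and $S_{f_i}P_i S_{f_i}^*=S_{f_i}S_{f_i}^*$ to $S_{\ell_0}^n S_{f_i}S_{f_i}^*(S_{\ell_0}^*)^n\le S_{\ell_0}^n P_0 (S_{\ell_0}^*)^n = S_{\ell_0}^n(S_{\ell_0}^*)^n\le P_0=\phi(P_{s(e_{i,n})})$; this both checks \eqref{eq:CK2} and, comparing different pairs $(i,n)\ne(i',n')$, shows the ranges are orthogonal (the different powers of $S_{\ell_0}$ and the distinct $f_i$ force orthogonality via the Cuntz--Krieger relations of $L_3^{r;1,r}$). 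The relation \eqref{eq:CK3} imposes no constraint on $C^*(F_1^{1,r})$ since it is an amplified graph with no regular vertices, so nothing further is required there.

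By the universal property of $C^*(F_1^{1,r})$, these verified relations give a $*$-homomorphism $\phi$ into $C^*(L_3^{r;1,r})^{U(1)}$. To prove it is an isomorphism, I would argue surjectivity by showing the images of the generators generate the whole core: the core is spanned by gauge-invariant monomials in the $S_{\ell_j},S_{f_i}$ and their adjoints, and I would check these are polynomial expressions in the $T_{i,n}$, the $P_j$, and their adjoints (the operators $S_{\ell_0}^n S_{f_i}(S_{\ell_i}^*)^{n+1}$ and products thereof recover all balanced words). For injectivity, the cleanest route is gauge-invariant uniqueness: since $\phi$ intertwines (or rather lands in the fixed-point algebra) and is nonzero on each vertex projection $P_j\ne 0$, I can invoke that no vertex projection is killed, and then apply the Cuntz--Krieger uniqueness theorem for $C^*(F_1^{1,r})$ (or directly construct the inverse on generators and check it is also a well-defined $*$-homomorphism).

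The main obstacle I expect is the bookkeeping in \eqref{eq:CK1} and the orthogonality of ranges: one must track carefully how the powers $(S_{\ell_i}^*)^{n+1}S_{\ell_i}^{\,n+1}=P_i$ collapse and how the range projections $S_{f_i}S_{f_i}^*$ and $S_{\ell_0}^n(S_{\ell_0}^*)^n$ nest under the partial ordering of projections, using repeatedly that $\ell_0$ is a loop at the source $0$ and $\ell_i$ a loop at the target $i$. The shift by one in the exponent ($n$ versus $n+1$) is exactly what makes $T_{i,n}$ a genuine partial isometry with the right source and range projections, and getting this index arithmetic right is the crux of the verification; once the generator relations are confirmed, invoking universality and Cuntz--Krieger uniqueness finishes the argument routinely.
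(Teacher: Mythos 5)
Your overall strategy is the same as the paper's: verify the Cuntz--Krieger relations to obtain $\phi$ by universality, get injectivity from a uniqueness theorem, and prove surjectivity by showing the balanced spanning words of the core lie in the image. (One small correction on the uniqueness step: \emph{gauge-invariant} uniqueness is not available here, since $\phi$ lands in the fixed-point algebra and so cannot intertwine the nontrivial gauge action of $C^*(F_1^{1,r})$ with any action on the codomain; the applicable tool is the Cuntz--Krieger uniqueness theorem, which applies because $F_1^{1,r}$ has no cycles and $\phi(P_j)\neq 0$ for all $j$ --- exactly the paper's argument, citing \cite[Cor.~2.12]{DT02}.) However, two of your steps fail as written. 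First, in checking \eqref{eq:CK1} your telescoping only reaches $\phi(S_{e_{i,n}})^*\phi(S_{e_{i,n}})=S_{\ell_i}^{n+1}(S_{\ell_i}^*)^{n+1}$, and the relations you invoke ($S_{\ell_0}^*S_{\ell_0}=P_0$, $S_{f_i}^*S_{f_i}=P_i$, $S_{\ell_i}^*S_{\ell_i}=P_i$) yield only $S_{\ell_i}^{n+1}(S_{\ell_i}^*)^{n+1}\leq P_i$, not equality; the identity you quote in your closing paragraph, $(S_{\ell_i}^*)^{n+1}S_{\ell_i}^{n+1}=P_i$, is the trivial wrong-order one. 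Equality requires observing that the vertex $i$ emits \emph{only} the loop $\ell_i$ in $L_3^{r;1,r}$, so \eqref{eq:CK3} at $i$ gives $S_{\ell_i}S_{\ell_i}^*=P_i$, i.e.\ $S_{\ell_i}$ is a normal partial isometry and hence $S_{\ell_i}^{n+1}(S_{\ell_i}^*)^{n+1}=(S_{\ell_i}S_{\ell_i}^*)^{n+1}=P_i$. The paper makes this point explicitly; your sketch omits the one fact that makes the computation close.

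Second, your surjectivity mechanism is insufficient for the diagonal words. The core is spanned by $S_{\ell_0}^k(S_{\ell_0}^*)^k$ together with $S_{\ell_0}^jS_{f_i}Z_{i,k-j}S_{f_i}^*(S_{\ell_0}^*)^k$; the latter are indeed products, namely $\phi(S_{e_{i,j}})\phi(S_{e_{i,k}})^*$, but $S_{\ell_0}^k(S_{\ell_0}^*)^k$ with $k\geq 1$ is \emph{not} a product of the $\phi(P_j)$, $\phi(S_{e_{i,n}})$ and their adjoints: every $\phi(S_{e_{i,n}})$ carries a factor $S_{f_i}$ that cannot be cancelled multiplicatively, so your parenthetical claim that ``products thereof recover all balanced words'' is false for these elements. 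They are obtained \emph{additively}, using \eqref{eq:CK3} at the vertex $0$ of $L_3^{r;1,r}$, i.e.\ $\sum_{i=1}^r S_{f_i}S_{f_i}^*=P_0-S_{\ell_0}S_{\ell_0}^*$, which gives the telescoping identity $\phi(P_0)-\sum_{n=0}^{k-1}\sum_{i=1}^r\phi(S_{e_{i,n}})\phi(S_{e_{i,n}})^*=S_{\ell_0}^k(S_{\ell_0}^*)^k$. This sum identity is the genuinely non-routine part of the paper's surjectivity proof, and it is the missing idea in your plan; with it (and the normality observation above) your argument coincides with the paper's.
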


\begin{proof}
Firstly, we prove that $\phi$ is well-defined by checking the Cuntz-Krieger relations. The proof of \ref{eq:CK1} is a simple computation:
\[
\phi(S_{e_{i,n}})^*\phi(S_{e_{i,n}})=S_{\ell_i}^{n+1}(S_{\ell_i}^*)^{n+1}=
(S_{\ell_i}^*S_{\ell_i})^{n+1}=P_i,
\]
where we used the fact that $S_{\ell_i}$ is normal, as a consequence Cuntz-Krieger relations and of $i$ emitting only the arrow $\ell_i$.
The condition \ref{eq:CK2} is empty since the graph $F_1^{1,r}$ has no regular vertices. Concerning \ref{eq:CK3}, we start with:
\[
\phi(S_{e_{i,n}})\phi(S_{e_{i,n}})^* =
S_{\ell_0}^nS_{f_i}S_{f_i}^*(S_{\ell_i}^*)^n
\leq 
S_{\ell_0}^nP_0(S_{\ell_0}^*)^n=
S_{\ell_0}^n(S_{\ell_0}^*)^n ,
\]
where we used $S_fS_f\leq S_{s(f)}$, which holds for every arrow $f$.
By the same argument
\[
S_{\ell_0}^n(S_{\ell_0}^*)^n \leq S_{\ell_0}^{n-1} P_0 (S_{\ell_0}^*)^{n-1}=
S_{\ell_0}^{n-1} P_0 (S_{\ell_0}^*)^{n-1}\leq \ldots\leq S_{\ell_0} S_{\ell_0}^* \leq P_0 ,
\]
hence $\phi(S_{e_{i,n}})\phi(S_{e_{i,n}})^* \leq \phi(P_0)$.

The image of $\phi$ is clearly in $C^*(L_3^{r;1,r})^{U(1)}$.
The injectivity of $\phi$ follows from the observation that, for all $0\leq j\leq r$,
$\phi(P_j)$ is not zero and $F_1^{1,r}$ has no cycles \cite[Cor.~2.12]{DT02}.

It remains to prove surjectivity. The core $C^*(L_3^{r;1,r})^{U(1)}$ is spanned by products $S_\alpha S_\beta^*$, where $\alpha,\beta$ are paths with the same length and the same target \cite{R05}.
That is,
\[
S_{\ell_0}^k(S_{\ell_0}^*)^k
\qquad\text{and}\qquad
S_{\ell_0}^jS_{f_i}S_{\ell_i}^{k+m}(S_{\ell_i}^*)^{j+m}S_{f_i}^*
(S_{\ell_0}^*)^k
\]
for $1\leq i\leq r$, $j,k\in\N$ and $m \geq\max\{ -j,-k\}$.
Call $Z_{i,n}:=S_{\ell_i}^n$ if $n>0$,
$Z_{i,n}:=(S_{\ell_i}^*)^{-n}$ if $n<0$ and
$Z_{i,0}:=P_i$. Then, $C^*(L_3^{r;1,r})^{U(1)}$ is spanned by
\begin{equation}\label{eq:ele}
S_{\ell_0}^k(S_{\ell_0}^*)^k
\qquad\text{and}\qquad
S_{\ell_0}^jS_{f_i}Z_{i,k-j}S_{f_i}^*
(S_{\ell_0}^*)^k
\end{equation}
for $j,k\in\N$. These elements are all in the image of $\phi$. Firstly,
\[
\phi(S_{e_{i,j}})
\phi(S_{e_{i,k}}^*)=
S_{\ell_0}^jS_{f_i}Z_{i,k-j}S_{f_i}^*(S_{\ell_0}^*)^k .
\]
Moreover, remembering that the sum of $S_\alpha S_\alpha^*$ over all paths of length $\alpha$ in a graph with fixed length gives the source projection, we find
\begin{multline*}
\sum_{i=1}^r
\phi(S_{e_{i,n}})\phi(S_{e_{i,n}})^*=
\sum_{i=1}^r
S_{\ell_0}^n S_{f_i}(S_{\ell_i}S_{\ell_i}^*)^{n+1}
S_{f_i}^*
(S_{\ell_0}^*)^n \\
=S_{\ell_0}^n
\Big(
\sum_{i=1}^r
S_{f_i}
S_{f_i}^*
\Big)
(S_{\ell_0}^*)^n=
S_{\ell_0}^n
\big(
P_0-S_{\ell_0}S_{\ell_0}^*
\big)
(S_{\ell_0}^*)^n .
\end{multline*}
This is equal to $S_{\ell_0}^n
(S_{\ell_0}^*)^n-
S_{\ell_0}^{n+1}
(S_{\ell_0}^*)^{n+1}$ if $n>0$, and to $P_0-S_{\ell_0}S_{\ell_0}^*$ if $n=0$.
Thus,
\[
\phi(P_0)-\sum_{n=0}^{k-1}
\sum_{i=1}^r
\phi(S_{e_{i,n}})\phi(S_{e_{i,n}})^*=
S_{\ell_0}^k(S_{\ell_0}^*)^k ,
\]
proving that all the elements \eqref{eq:ele} are in the image of $\phi$.
\end{proof}

For $r=1$, we find as a special case the isomorphism between $C(\C P^1_q)\cong C^*(F_1)$ and $C(S^3_q)^{U(1)}\cong C^*(L_3)^{U(1)}$.
It is not clear how to construct an isomorphism from $C(\C P^n_q)\cong C^*(F_n)$ to $C(S^{2n+1}_q)^{U(1)}\cong C^*(L_{2n+1})^{U(1)}$ explicitly for $n\geq 2$.
What makes things easy for the graph $F_1^{1,r}$ of a quantum teardrop is that there are no paths of length greater than $1$. Every arrow in $F_1^{1,r}$ ends on a sink. But already in the case of $\C P^2_q$, it is not clear where to map the isometries attached to the arrows in $F_2$ whose target is the middle vertex. The construction of an explicit isomorphism $C^*(F_R)\to C^*(E_R)^{U(1)}$ for general $R$ is a topic for future work.

\endgroup

\subsection{Circle bundles over quantum flag manifolds}
Let $G$ be a simply connected compact semisimple Lie group, $S$ a subset of simple roots, and $L_S\subset G$ the associated Levi factor.
For $q\in (0,1)$ there is a standard quantization $C_q(G/L_S)$ of the C*-algebra of continuous functions on the flag manifold $G/L_S$, which is a special case of the construction of C*-algebra of a quantum homogeneous space in \cite{NT12}.
It turns out that $C_q(G/L_S)$ is independent of the value of $q$, and isomorphic to the C*-algebra of an amplified graph which can be explicitly constructed as explained in \cite{BBKS22,S22}.

The construction goes as follows \cite{S22}.
The Dynkin diagram determines the Weyl group $W$, which has an abstract presentation in terms of generators $\{s_1,\ldots,s_r\}$, one for each node of the diagram, and relations:
\begin{align*}
s_i^2 & =1 && \forall\;1\leq i\leq r, \\
(s_is_j)^{2+k+\delta_{k,3}} &=1 && \text{if there are  $k$ edges between the nodes $i$ and $j$, $0\leq k\leq 3$}.
\end{align*}
The second relation for $k=0$ simply tells us that $s_i$ and $s_j$ commute, while for $k=1$ it can be conveniently rewritten as a braid relation:
\[
s_is_js_i =s_js_is_j .
\]
The \emph{length} $\ell(w)$ of $w\in W$ the number of generators appearing in any reduced form of $w$.

For $S$ any subset of simple roots, let $W_S$ be the subgroup generated by $\{s_i\mid i\in S\}$. For each left coset in $W/W_S$ we choose a representative of minimal length, and the note by $W^S$ the set of all these representatives.
We define a relation $R$ on $W^S$ as follows:
$(v,w)\in R$ if and only if $w=s_iv$ and $\ell(w)>\ell(v)$ for some generator $s_i$.

It is obvious that $R$ is a finite directed acyclic graph. In fact, it is a graded graph, where each vertex $w\in W^S$ is graded by its length, and arrows go only in the direction where the length increases.
Called $F_R$ the amplified graph associated to this relation $R$, as defined in Sect.~\ref{sec:maint}, then:

\begin{thm}[\cite{BBKS22,S22}]\label{thm:2}
The following isomorphism holds: $C_q(G/L_S)\cong C^*(F_R)$.
\end{thm}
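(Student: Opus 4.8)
The plan is to determine both algebras through their representation theory and then appeal to a classification theorem, in close parallel with the proof of Theorem~\ref{thm:1}. Since $F_R$ is finite with no directed cycles, $C^*(F_R)$ is a unital AF algebra, and the asserted isomorphism would force $C_q(G/L_S)$ to be AF as well; so the route I would take is to prove directly that $C_q(G/L_S)$ is AF and that its dimension group coincides with that of $C^*(F_R)$, which is already computed in Propositions~\ref{prop:1} and~\ref{prop:2}. Elliott's classification~\cite{Ell76} then yields the isomorphism.

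First I would recall Soibelman's description of the irreducible $\ast$-representations of $C_q(G)$: they are parametrised by $W\times T$, where $T$ is the maximal torus, each $\pi_w$ being assembled from a reduced expression $w=s_{i_1}\cdots s_{i_{\ell(w)}}$ as a tensor product of $\ell(w)$ elementary $SU_q(2)$-representations on $\ell^2(\N)$, twisted by a torus character. Realising $C_q(G/L_S)$ as the subalgebra of $C_q(G)$ fixed by the coaction of the Levi factor, as in \cite{NT12}, the surviving irreducibles are indexed by the minimal coset representatives $W^S$, with $\pi_w$ acting on $\ell^2(\N^{\ell(w)})$. The structural fact I would extract is that each $\pi_w$ has image containing the compact operators and that these representations organise $C_q(G/L_S)$ into a finite composition series with elementary subquotients $\mathcal{K}$, one for each $w\in W^S$.

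Next I would compute the topology of the spectrum, that is, the closure relations $[\pi_v]\in\overline{\{[\pi_w]\}}$. These are controlled by the quantum Schubert cell decomposition and, I expect, reduce exactly to the order $\overline{R}$: the point $[\pi_v]$ lies in the closure of $[\pi_w]$ if and only if $v\preceq w$. This identifies $\mathrm{Prim}\,C_q(G/L_S)$ with the poset $(W^S,\preceq)$. On the graph side, because $F_R$ is amplified and has no regular vertices, its hereditary saturated vertex sets are precisely the up-sets of $(W^S,\preceq)$, the subquotient attached to each vertex is $\mathcal{K}$, and $\mathrm{Prim}\,C^*(F_R)$ is the same poset (with the down-sets as closed sets). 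Reading off the Hasse diagram $\underline{R}$ then produces a Bratteli diagram for $C_q(G/L_S)$ and shows that $K_0$ is freely generated by the classes $[\pi_w]$, matching Proposition~\ref{prop:1}\ref{en:first}.

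The hard part will be matching not the poset but the \emph{positive cone}, i.e.\ the gluing data between consecutive subquotients in the composition series. Concretely I must show that, whenever $v\prec w$ in $(W^S,\preceq)$, one has $[\pi_v]\geq k[\pi_w]$ in $K_0$ for every $k\geq 1$, which is exactly the inequality of Proposition~\ref{prop:1}\ref{en:second} and is where the amplification is indispensable: each additional reflection $s_i$ should contribute an infinite-multiplicity compression in $\pi_w$, producing the unbounded inequalities rather than finite ones. Establishing these from the explicit form of the $\pi_w$, together with the order unit $[1]=\sum_{w\in W^S}[\pi_w]$, would show that the dimension group of $C_q(G/L_S)$ equals the one described in Proposition~\ref{prop:2}, and Elliott's theorem then completes the proof.
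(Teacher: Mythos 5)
First, a point of comparison: this theorem is not proved in the paper at all --- it is imported verbatim from \cite{BBKS22,S22} (the first listed as ``in preparation,'' the second an Oberwolfach report), so there is no internal proof to measure your argument against. Your outline does resemble the strategy announced in those references (Soibelman-type representation theory producing a composition series indexed by $W^S$, followed by a classification theorem), so the overall architecture is reasonable. Note, however, that the classification available for amplified graph C*-algebras in the literature \cite{ERS12} goes through a decorated primitive ideal space rather than Elliott's dimension-group machinery; your Elliott route is admissible only once you know $C_q(G/L_S)$ is AF, which your sketch assumes more than it establishes.

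The genuine gaps sit exactly at the two decisive points. (i) You ``expect'' the closure relations in the spectrum to reduce to $\overline{R}$, i.e.\ $\mathrm{Prim}\,C_q(G/L_S)\cong (W^S,\preceq)$, and you take for granted that each subquotient is a single copy of $\mathcal{K}$. Both are substantial theorems (Dijkhuizen--Stokman-type analysis), not bookkeeping: Soibelman's irreducibles of $C_q(G)$ come in torus families, and one must show the torus twists become spatially equivalent after restriction to $C_q(G/L_S)$; if even a circle family survived, the subquotients would be of the form $\mathcal{K}\otimes C(T^k)$ and the algebra would not be AF, so AF-ness itself hinges on this step (and, even granted $\mathcal{K}$-subquotients, AF-ness of an iterated extension needs Brown's theorem that extensions of AF algebras by AF algebras are AF, which you neither invoke nor replace). (ii) The positivity inequalities $[\pi_v]\geq k[\pi_w]$ for all $k\geq 1$, which you correctly flag as ``the hard part,'' are where the entire content lies: the poset together with the subquotients does not determine the positive cone --- this is precisely why the classification of amplified graph algebras must record multiplicity (``tempered'') data on the primitive ideal space --- and in the analogous computation this paper actually carries out (Proposition~\ref{prop:1}, for $C^*(E_R)^{U(1)}$) this single step consumes the whole inductive argument. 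A one-line heuristic about ``infinite-multiplicity compressions'' does not produce the required infinitely many mutually orthogonal projections, each equivalent to the class attached to $w$, inside the hereditary piece attached to $\pi_v$; one would need explicit projections built from the representations, and no candidate is given. As it stands, the proposal is a plausible road map that defers both of its critical steps, so it is not a proof.
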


Our Theorem \ref{thm:1} gives an explicit description of $C_q(G/L_S)$ as the core of a Cuntz-Krieger algebra.

Let us work out the explicit example of the Grassmannian of $2$-planes in $\C^4$.
The Dynkin diagram of $SU(4)$ ($A_3$) is:
\begin{center}
\begin{tikzpicture}[inner sep=2pt,semithick,font=\footnotesize]

\node[draw,circle] (a) at (0,0) {};
\node[draw,circle] (b) at (1,0) {};
\node[draw,circle] (c) at (2,0) {};
\draw (a) node[below=3pt] {$1$};
\draw (b) node[below=3pt] {$2$};
\draw (c) node[below=3pt] {$3$};
\draw (a) -- (b);
\draw (b) -- (c);

\end{tikzpicture}
\end{center}
The Weyl group $W$ is isomorphic to the symmetric group on $4$ elements, and has the following 24 elements:
\begin{gather*}
1 \;,\,
s_1 \;,\,
s_2 \;,\,
s_3 \;,\,
\\
s_1s_2 \;,\,
s_2s_1 \;,\,
s_2s_3 \;,\,
s_3s_1 \;,\,
s_3s_2 \;,\,
\\
s_1s_2s_1 \;,\,
s_1s_2s_3 \;,\,
s_2s_3s_1 \;,\,
s_2s_3s_2 \;,\,
s_3s_1s_2 \;,\,
s_3s_2s_1 \;,\,
\\
s_1s_2s_3s_1 \;,\,
s_1s_2s_3s_2 \;,\,
s_2s_3s_1s_2 \;,\,
s_2s_3s_2s_1 \;,\,
s_3s_1s_2s_1 \;,\,
\\
s_1s_2s_3s_1s_2 \;,\,
s_1s_2s_3s_2s_1 \;,\,
s_2s_3s_1s_2s_1 \;,\,
s_1s_2s_3s_1s_2s_1 .
\end{gather*}
The Grassmannian $Gr(2,4)=SU(4)/S\big(U(2)\times U(2)\big)$ is obtained by choosing the subset $S=\{1,3\}$ of simple roots.
In this case, $W_S=\{1,s_1,s_3,s_1s_3\}$ and $W^S$ has elements
$
1$, $
s_2$, $
s_1s_2$, $
s_3s_2$, $
s_1s_3s_2$, $
s_2s_3s_1s_2
$. The graph $R$ is
\begin{center}
\begin{tikzpicture}[inner sep=1pt,font=\small]

\node[left] (1) at (0,0) {$1$};
\node (2) at (2,0) {$s_2$};
\node (3) at (4,1) {$s_1s_2$};
\node (4) at (4,-1){$s_3s_2$};
\node (5) at (6,0) {$s_1s_3s_2$};
\node[right] (6) at (8,0) {$s_2s_1s_3s_2$};

\path[freccia] (1) edge (2);
\path[freccia] (2) edge (3);
\path[freccia] (2) edge (4);
\path[freccia] (3) edge (5);
\path[freccia] (4) edge (5);
\path[freccia] (5) edge (6);

\end{tikzpicture}
\end{center}
It follows from \cite{BBKS22,S22} and Theorem \ref{thm:1} that:

\begin{cor}
$C_q(Gr(2,4))\cong C^*(\widetilde{F}_{2,4})\cong C^*(\widetilde{L}_{2,4})^{U(1)}$, where $\widetilde{F}_{2,4}$ is the graph in Figure~\ref{fig:gr24} and $\widetilde{L}_{2,4}$ the graph in Figure~\ref{fig:graphM9b}.
\end{cor}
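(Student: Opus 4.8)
The plan is to obtain this corollary as an essentially immediate consequence of Theorems \ref{thm:1} and \ref{thm:2}, so that the only genuine content is the finite combinatorial bookkeeping with the Weyl group, which has already been carried out in the discussion above.

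First I would invoke Theorem \ref{thm:2} with $G=SU(4)$ and $S=\{1,3\}$ to identify $C_q(Gr(2,4))$ with $C^*(F_R)$, where $R$ is the relation on the set $W^S$ of minimal-length coset representatives defined by $(v,w)\in R$ iff $w=s_iv$ with $\ell(w)>\ell(v)$ for some simple reflection $s_i$. The six elements of $W^S$ and the six covering relations among them are exactly those displayed in the graph $R$ above; since $R\subseteq W^S\times W^S$ is a relation it carries no multiple edges, and it is acyclic because each arrow strictly increases length. Thus $R$ satisfies the hypotheses of Theorem \ref{thm:1}, and its amplification $F_R$ is by definition the graph $\widetilde{F}_{2,4}$ of Figure \ref{fig:gr24}. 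This yields the first isomorphism $C_q(Gr(2,4))\cong C^*(\widetilde{F}_{2,4})$.

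Next I would apply Theorem \ref{thm:1} to this same $R$. The graph $E_R$ obtained by adjoining a loop at each of the six vertices of $R$ is precisely $\widetilde{L}_{2,4}$ of Figure \ref{fig:graphM9b}, so the theorem gives $C^*(F_R)\cong C^*(E_R)^{U(1)}$, that is $C^*(\widetilde{F}_{2,4})\cong C^*(\widetilde{L}_{2,4})^{U(1)}$. Composing the two isomorphisms completes the proof.

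The only step requiring real verification, and hence the main potential obstacle, is confirming that $W^S=\{1,\,s_2,\,s_1s_2,\,s_3s_2,\,s_1s_3s_2,\,s_2s_1s_3s_2\}$ together with its covering relation is correctly computed. Concretely, for $W\cong S_4$ and $W_S=\{1,s_1,s_3,s_1s_3\}$ one must check, coset by coset in $W/W_S$, that the listed representative is the unique one of minimal length, and then determine for each pair which left multiplication by a generator raises the length by one. This is a finite and routine check, but it must be done with care so that no covering relation is omitted and so that the resulting graph is genuinely the simple directed acyclic graph drawn (in particular, no two representatives are linked by two different generators). Once this identification of $R$ is secured, everything else is formal: Theorems \ref{thm:1} and \ref{thm:2} supply all the operator-algebraic content.
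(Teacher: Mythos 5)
Your proposal is correct and follows exactly the paper's route: the corollary is obtained by combining Theorem~\ref{thm:2} (from \cite{BBKS22,S22}) with Theorem~\ref{thm:1} applied to the relation $R$ on $W^S$ computed in the preceding discussion, whose acyclicity and simplicity follow from the length-grading just as you note. The finite Weyl-group bookkeeping you flag as the only real verification is precisely the computation the paper carries out before stating the corollary.
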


\begin{figure}[t]
\begin{tikzpicture}[inner sep=1pt,font=\small]

\node[main node] (1) at (0,0) {1};
\node[main node] (2) at (2,0) {2};
\node[main node] (3) at (4,1) {3};
\node[main node] (4) at (4,-1){4};
\node[main node] (5) at (6,0) {5};
\node[main node] (6) at (8,0) {6};

\path[freccia] (1) edge[ciclo,rotate=90] (1);
\path[freccia] (2) edge[ciclo] (2);
\path[freccia] (3) edge[ciclo] (3);
\path[freccia] (4) edge[ciclo,rotate=180] (4);
\path[freccia] (5) edge[ciclo] (5);
\path[freccia] (6) edge[ciclo,rotate=-90] (6);

\path[freccia] (1) edge (2);
\path[freccia] (2) edge (3);
\path[freccia] (2) edge (4);
\path[freccia] (3) edge (5);
\path[freccia] (4) edge (5);
\path[freccia] (5) edge (6);

\end{tikzpicture}

\caption{The graph $\widetilde{L}_{2,4}$.}\label{fig:graphM9b}

\vspace{1cm}

\begin{tikzpicture}[inner sep=1pt,font=\small]

\node[main node] (1) at (0,0) {1};
\node[main node] (2) at (2,0) {2};
\node[main node] (3) at (4,1) {3};
\node[main node] (4) at (4,-1){4};
\node[main node] (5) at (6,0) {5};
\node[main node] (6) at (8,0) {6};

\path[freccia] (1) edge node[fill=white] {$\infty$}  (2);
\path[freccia] (2) edge node[fill=white,sloped] {$\infty$}  (3);
\path[freccia] (2) edge node[fill=white,sloped] {$\infty$}  (4);
\path[freccia] (3) edge node[fill=white,sloped] {$\infty$}  (5);
\path[freccia] (4) edge node[fill=white,sloped] {$\infty$}  (5);
\path[freccia] (5) edge node[fill=white] {$\infty$}  (6);

\end{tikzpicture}

\medskip

\caption{The graph $\widetilde{F}_{2,4}$.}\label{fig:gr24}
\end{figure}

Similarly to the case of quantum projective spaces, one can replace $R$ by its transitive closure $\overline{R}$. Since the C*-algebra of an amplified graph only depends on its transitive closure, from Theorem \ref{thm:1} again we deduce that:

\begin{cor}\label{cor:geom}
$C_q(Gr(2,4))\cong C^*(L_{2,4})^{U(1)}$, where $L_{2,4}$ is the graph in Figure~\ref{fig:graphM9}.
\end{cor}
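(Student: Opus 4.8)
The plan is to observe that, in conjunction with the preceding corollary $C_q(Gr(2,4)) \cong C^*(\widetilde{F}_{2,4}) \cong C^*(\widetilde{L}_{2,4})^{U(1)}$ (in which $\widetilde{F}_{2,4}=F_R$ and $\widetilde{L}_{2,4}=E_R$), the genuine content of the statement is the isomorphism of cores $C^*(E_R)^{U(1)} \cong C^*(E_{\overline{R}})^{U(1)}$, where $L_{2,4}=E_{\overline{R}}$ is the graph obtained from the transitive closure $\overline{R}$ by adjoining a loop at every vertex. First I would note that this cannot be obtained directly: although the C*-algebra of an amplified graph is invariant under passing to the transitive closure, the Cuntz-Krieger algebras $C^*(E_R)$ and $C^*(E_{\overline{R}})$ are related only by an isomorphism that is not gauge-equivariant (the phenomenon flagged in the Introduction around diagram \eqref{eq:commdiag}), so it does not restrict to the cores. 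The argument must therefore be routed through the amplified graphs.

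The key steps, in order, are as follows. Since $R$ is a finite directed acyclic graph, its transitive closure $\overline{R}$ is a strict partial order and in particular loop-free, hence $\overline{R}$ itself satisfies the hypotheses of Theorem \ref{thm:1}. Applying Theorem \ref{thm:1} to $R$ gives $C^*(E_R)^{U(1)} \cong C^*(F_R)$; applying it to $\overline{R}$ gives $C^*(F_{\overline{R}}) \cong C^*(E_{\overline{R}})^{U(1)}$. The two amplified graphs $F_R$ and $F_{\overline{R}}$ share the same transitive closure, so the result of \cite{ERS12} yields $C^*(F_R) \cong C^*(F_{\overline{R}})$. Concatenating with the preceding corollary, I obtain
\[
C_q(Gr(2,4)) \cong C^*(\widetilde{F}_{2,4}) = C^*(F_R) \cong C^*(F_{\overline{R}}) \cong C^*(E_{\overline{R}})^{U(1)} = C^*(L_{2,4})^{U(1)}.
\]
It remains only to confirm that $E_{\overline{R}}$ --- the transitive closure of the Hasse diagram on $W^S$ with loops adjoined --- is exactly the graph $L_{2,4}$ of Figure~\ref{fig:graphM9}, which is a direct inspection.

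The hard part is conceptual rather than computational: it lies in resisting the tempting shortcut of invoking transitive-closure invariance directly at the level of $C^*(E_R)$ and $C^*(E_{\overline{R}})$. That invariance is strictly an amplified-graph phenomenon, and the total-space comparison is not compatible with the gauge actions, so the cores are not seen to agree by any manifest map. Going through Theorem \ref{thm:1} applied separately to $R$ and to $\overline{R}$ is precisely the device that produces the isomorphism without ever needing an equivariant map between the Cuntz-Krieger algebras.
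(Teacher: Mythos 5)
Your proposal is correct and follows essentially the same route as the paper: the text deduces Cor.~\ref{cor:geom} precisely by replacing $R$ with its transitive closure $\overline{R}$, invoking the transitive-closure invariance of amplified graph C*-algebras from \cite{ERS12} to get $C^*(F_R)\cong C^*(F_{\overline{R}})$, and then applying Theorem~\ref{thm:1} to $\overline{R}$, noting $E_{\overline{R}}=L_{2,4}$. Your additional remark that one cannot argue directly via the non-gauge-equivariant isomorphism $C^*(E_R)\cong C^*(E_{\overline{R}})$ correctly captures the subtlety the paper flags around diagram~\eqref{eq:commdiag}.
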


\begin{figure}[b]

\begin{tikzpicture}[inner sep=1pt,font=\small]

\clip (-0.6,-4) rectangle (13.2,1.4);

\node[main node] (1) {1};
\node (2) [main node,right of=1] {2};
\node (3) [main node,right of=2] {3};
\node (4) [main node,right of=3] {4};
\node (5) [main node,right of=4] {5};
\node (6) [main node,right of=5] {6};

\path[freccia] (1) edge[ciclo] (1);
\path[freccia] (2) edge[ciclo] (2);
\path[freccia] (3) edge[ciclo] (3);
\path[freccia] (4) edge[ciclo] (4);
\path[freccia] (5) edge[ciclo] (5);
\path[freccia] (6) edge[ciclo] (6);

\path[freccia] (1) edge (2);
\path[freccia] (1) edge[bend right=20] (3);
\path[freccia] (1) edge[bend right=40] (4);
\path[freccia] (1) edge[bend right=60] (5);
\path[freccia] (1) edge[bend right=80] (6);

\path[freccia] (2) edge (3);
\path[freccia] (2) edge[bend right=20] (4);
\path[freccia] (2) edge[bend right=40] (5);
\path[freccia] (2) edge[bend right=60] (6);

\path[freccia] (3) edge[bend right=20] (5);
\path[freccia] (3) edge[bend right=40] (6);

\path[freccia] (4) edge (5);
\path[freccia] (4) edge[bend right=20] (6);

\path[freccia] (5) edge (6);

\end{tikzpicture}

\smallskip

\caption{The graph $L_{2,4}$.}\label{fig:graphM9}
\end{figure}
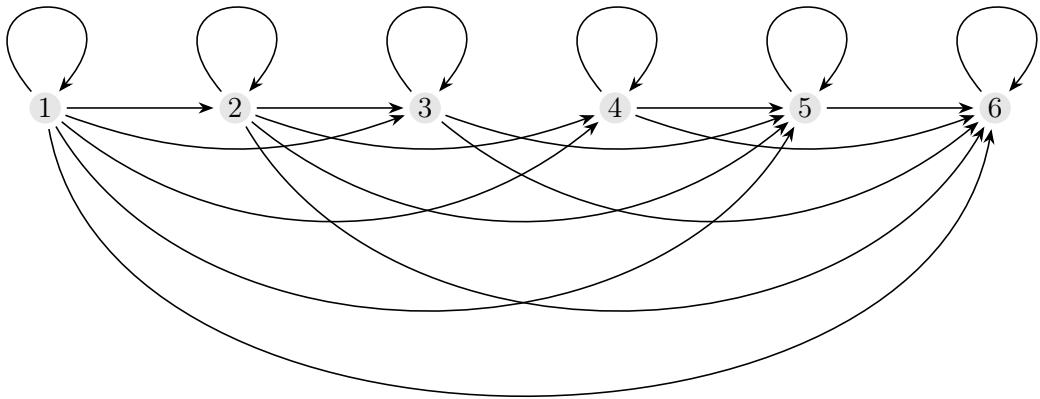

\begin{rem}
In Sect.~\ref{sec:5} we will describe the geometrical interpretation of $C^*(L_{2,4})$, which one can regard as a quantization of the codimension $2$ submanifold of $S^{11}$ defined by Pl{\"u}cker's equation \eqref{eq:pluck}. See Prop.~\ref{prop:M09}, where we study the quantization at $q=0$.
\end{rem}

Finally, there is an isomorphism between $C^*(L_{2,4})$ and $C^*(\widetilde{L}_{2,4})$, which however is not $U(1)$-equivariant and then does not induce an isomorphism of AF cores. This is in analogous to the double presentation of the Vaksman-Soibelman sphere in terms of the graphs $L_{2n-1}$ and $\widetilde{L}_{2n-1}$. Even if there is no general theorem here, we see that in some sense taking the transitive closure of a graph (defined in the appropriate way) does not change the graph C*-algebra. The presentation of the Grassmannian using the graph $L_{2,4}$ will be needed
when studying its CW-structure in Sect.~\ref{sec:5}.

\begin{prop}
There is a non-gauge-equivariant isomorphism $C^*(L_{2,4})\cong C^*(\widetilde{L}_{2,4})$.
\end{prop}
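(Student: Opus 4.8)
The plan is to exhibit the isomorphism via the theory of amplified graph C*-algebras recalled in the introduction, rather than by a direct computation of Cuntz-Krieger generators. The key observation is that both $L_{2,4}$ and $\widetilde{L}_{2,4}$ are obtained from the same underlying acyclic relation $R$ on $W^S$ by adding loops: indeed $\widetilde{L}_{2,4}=E_R$ and $L_{2,4}=E_{\overline{R}}$, where $\overline{R}$ is the transitive closure of $R$. The graphs $\widetilde{L}_{2,4}$ and $L_{2,4}$ therefore differ only in that the latter has extra (non-loop) arrows, precisely those arrows $(v,w)$ with $v\prec w$ in $\overline{R}$ that are not already in $R$.

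First I would recall from \cite{ERS12} that the C*-algebra of an amplified graph depends only on its transitive closure, which immediately gives $C^*(\widetilde{F}_{2,4})\cong C^*(F_{2,4})$ (where $F_{2,4}$ is the amplification of $\overline{R}$). Combining this with Theorem \ref{thm:1} applied to both $R$ and $\overline{R}$ yields $U(1)$-equivariant isomorphisms at the level of cores:
\[
C^*(\widetilde{L}_{2,4})^{U(1)}\cong C^*(\widetilde{F}_{2,4})\cong C^*(F_{2,4})\cong C^*(L_{2,4})^{U(1)}.
\]
This establishes that the two cores are isomorphic, which is a necessary condition. However, this does \emph{not} yet give an isomorphism of the full algebras $C^*(L_{2,4})$ and $C^*(\widetilde{L}_{2,4})$, and the whole point of the proposition is that such an isomorphism exists but cannot be gauge-equivariant (if it were, it would induce the core isomorphism compatibly, whereas we are asserting precisely the analogue of the inequivalent double presentation of the Vaksman--Soibelman sphere).

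The main approach for the full statement is to apply a classification result for Cuntz--Krieger algebras. Both $L_{2,4}$ and $\widetilde{L}_{2,4}$ are finite graphs with no sinks and no sources whose adjacency matrices are $\{0,1\}$-matrices, so $C^*(L_{2,4})$ and $C^*(\widetilde{L}_{2,4})$ are genuine Cuntz--Krieger algebras $\mathcal{O}_A$ and $\mathcal{O}_B$. By Cuntz--Krieger/Franks--R{\o}rdam classification, two such algebras are isomorphic precisely when the cokernels of $A-I$ and $B-I$ agree together with a sign/determinant invariant; concretely, one checks that the adjacency matrices are shift equivalent (or flow equivalent after suitable reduction), which can be verified by an explicit computation of $\mathrm{coker}(A^t-I)$ and the associated $\det(I-A)$ invariant. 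I would compute these two invariants for the two $6\times 6$ adjacency matrices and verify they coincide, concluding $\mathcal{O}_A\cong\mathcal{O}_B$.

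The hard part will be organizing the argument so that it yields the full isomorphism while making clear why it \emph{cannot} be chosen gauge-equivariant. The non-equivariance is forced: the gauge actions induce the same $\Z$-grading whose degree-zero part is the core, and an equivariant isomorphism would restrict to a gauge-compatible core isomorphism, but the two presentations have genuinely different higher-degree structure (the passage to the transitive closure changes the lengths of admissible paths, hence the grading), exactly as in the formulas of \cite[Thm.~3.11]{Dan23} cited in the introduction. Thus the cleanest route is to prove the existence of some isomorphism by the classification invariant, and then argue separately that no such isomorphism intertwines the gauge actions---the latter following because an equivariant isomorphism would have to preserve the filtration by path length, which the added transitive arrows in $L_{2,4}$ violate.
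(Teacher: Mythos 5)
Your opening step (the chain of core isomorphisms via \cite{ERS12} and Theorem \ref{thm:1}) is correct but, as you yourself note, does not bear on the full algebras. The genuine gap is in your main step: the classification machinery you invoke does not apply to these graphs. The Cuntz--Krieger/Franks--R{\o}rdam theorem (cokernel of $A^t-I$ plus the sign of $\det(I-A)$ as a complete invariant, via flow equivalence) is a statement about \emph{irreducible} matrices, i.e.\ \emph{simple} Cuntz--Krieger algebras, and its known non-simple extensions (Restorff's classification by filtered K-theory, and Boyle--Huang flow equivalence for reducible matrices) require condition (II)/(K) and a strictly larger invariant than the one you propose to compute. In $L_{2,4}$ and $\widetilde{L}_{2,4}$ every vertex lies on \emph{exactly one} cycle (its own loop), so condition (K) fails at every vertex; the algebras are highly non-simple (they are iterated extensions with subquotients Morita equivalent to $C(S^1)$, hence type I, with infinitely many ideals, not all gauge-invariant). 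For such algebras $\mathrm{coker}(A^t-I)$ together with $\det(I-A)$ is nowhere near complete, and no off-the-shelf theorem lets you conclude $\mathcal{O}_A\cong\mathcal{O}_B$ from agreement of those two invariants; you would also still need to track the class of the unit to upgrade a stable isomorphism to a unital one. The only applicable completeness result is the geometric classification of unital graph C*-algebras, and its invariant is ordered filtered K-theory, not what you compute.

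The paper sidesteps completeness entirely and uses only the constructive direction of that theory: by \cite[Cor.~5.6]{AER22}, the moves $(0),(I^+),(R^+),(C^+),(P^+)$ and their inverses preserve the isomorphism class, and by \cite[Prop.~4.1]{AER22}, when vertex $i$ supports a loop and there is a path from $i$ to $j$, adding row $i$ to row $j$ in $B_E=A_E^T-I$ is realized by moves of type $(0)$ and $(R^+)$. An explicit sequence of such row additions (add row $2$ to rows $3$ and $4$, row $4$ to row $5$, row $5$ to row $6$) transforms $B_{\widetilde{L}_{2,4}}$ into $B_{L_{2,4}}$, and this settles existence. It also settles the equivariance clause painlessly: the $(R^+)$ move does not preserve the gauge action, so the isomorphism so produced is not gauge-equivariant, which is all the proposition asserts. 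Your closing plan to show that \emph{no} isomorphism can be equivariant is both unnecessary for the statement and unsubstantiated: an abstract equivariant isomorphism need not respect any path-length filtration, and since the two cores are in fact isomorphic (both give $C(Gr_q(2,4))$), no contradiction arises from restricting to the degree-zero part, so the obstruction you sketch does not exist as stated.
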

\begin{proof}
Let $E$ and $F$ be directed graphs with finitely many vertices and $A_E, A_F$ their adjacency matrices. In \cite[Cor.~5.6]{AER22} it is shown that $C^*(E)$ and $C^*(F)$ are isomorphic if and only if $E$ can be obtained from $F$ using the moves $(0), (I^+), (R^+), (C^+), (P^+)$ and their inverses.

For a graph $E$ with no sources, let $B_E:=A_E^T-I$. 
WLOG, we assume that $E^0=\{1,\ldots,n\}$.
If two vertices $i,j\in E^0$ are such that $i$ supports a loop and there is a path from $i$ to $j$, then adding row $i$ to row $j$ in $B_E$ corresponds to moves of type $(0)$ and $(R^+)$ \cite[Proposition 4.1]{AER22}. Note that in our case we do not have any regular sources, hence we only have to represent the graph by the matrix $B$ and can avoid the $D$ vector in \cite[Proposition 4.1]{AER22}. Also note that the outsplit move $(0)$ preserves the gauge action but the $(R^+)$ move does not. See \cite[Def.~3.1 and 3.9]{ER19} for the definition of the two moves.

For the two graphs $L_{2,4}$ and $\widetilde{L}_{2,4}$ we have
\[
B_{L_{2,4}}=\begin{bmatrix}
0 & 0 & 0 & 0 & 0 & 0 \\
1 & 0 & 0 & 0 & 0 & 0 \\
1 & 1 & 0 & 0 & 0 & 0 \\
1 & 1 & 0 & 0 & 0 & 0 \\
1 & 1 & 1 & 1 & 0 & 0 \\
1 & 1 & 1 & 1 & 1 & 0
\end{bmatrix}, \qquad\quad
B_{\widetilde{L}_{2,4}}:=\begin{bmatrix}
0 & 0 & 0 & 0 & 0 & 0 \\
1 & 0 & 0 & 0 & 0 & 0 \\
0 & 1 & 0 & 0 & 0 & 0 \\
0 & 1 & 0 & 0 & 0 & 0 \\
0 & 0 & 1 & 1 & 0 & 0 \\
0 & 0 & 0 & 0 & 1 & 0
\end{bmatrix} .
\]
From the discussion above, if we add row $i$ to row $j$ in $B_{L_{2,4}}$
with $i<j$ and $(i,j)\neq (3,4)$ we don't change the isomorphism class of the C*-algebra.
By doing the following row operation, in the given order, we can obtain $B_{L_{2,4}}$ from $B_{\widetilde{L}_{2,4}}$: 
add row $2$ to row $3$ and row $4$,
add row $4$ to row $5$,
add row $5$ to row $6$.
\end{proof}

From now on, we switch notation from $C_q(Gr(2,4))$ to $C(Gr_q(2,4))$ and interpret non-commutative C*-algebras as dual objects of compact quantum spaces. Using this dual geometric point of view, we can interpret the embedding $C(Gr_q(2,4))\to C^*(L_{2,4})$ given by Cor.~\ref{cor:geom} as dual to a noncommutative principal $U(1)$-bundle over $Gr_q(2,4)$.

\section{A CW-type decomposition of $Gr_q(2,4)$}\label{sec:5}

\subsection{The classical case}
Before passing to the quantum case, let us illustrate the geometric picture that we want to ``quantize'', starting with the classical analogue of the extension in Cor.~\ref{cor:geom}. This is the
principal $U(1)$-bundle:
\begin{equation}\label{eq:bundle}
M^9:=\frac{SU(4)}{SU(2)\times SU(2)}\longrightarrow Gr(2,4)\cong\frac{SU(4)}{S\big(U(2)\times U(2)\big)}  .
\end{equation}
where the quotients are w.r.t.~the action by left multiplication. For $i,j\in\{1,\ldots,4\}$, with $i\neq j$, and $U=(u_{ij})\in SU(4)$, let $x_{ij}(U):=u_{1i}u_{2j}-u_{1j}u_{2i}$ be the determinant of the submatrix formed by the elements at the intersection of the first two rows with the columns $i$ and $j$ of $U$.
Then,
\[
(x_{12},x_{13},x_{14},x_{23},x_{24},x_{34}) : SU(4)\to\C^6
\]
has image in the unit sphere $S^{11}$, and induces a diffeomorphism between $M^9$ and the submanifold of $S^{11}$ defined by the Pl{\"u}cker equation:
\begin{equation}\label{eq:pluck}
x_{12}x_{34}-x_{13}x_{24}+x_{14}x_{23}=0 .
\end{equation}

To describe the CW structure of $Gr(2,4)$, it is convenient to think of $Gr(2,4)$ as the quotient of the space of full-rank $2\times 4$ complex matrices by the action by left multiplication by $GL_2(\C)$. The rows of such a matrix form a basis of a 2-dimensional complex vector subspace of $\C^4$, and the $GL_2(\C)$-action changes the basis but not the subspace.
For $A=(a_{ij})\in M_{2\times 4}(\C)$, we denote by $x_{ij}(A):=a_{1i}a_{2j}-a_{1j}a_{2i}$ the same minors as before.
A point in $Gr(2,4)$ with $x_{34}\neq 0$ is represented by a unique matrix in row-echelon form as follows:
\[
\begin{bmatrix}
* & * & 1 & 0 \\
* & * & 0 & 1
\end{bmatrix} .
\]
These points form a subspace of $Gr(2,4)$ homeomorphic to $\C^4\cong\R^8$ (the interior of an $8$-cell).
The complement $X^6$ of this subspace is the $6$-skeleton of the CW-structure.
Every matrix satisfying $x_{34}=0$ can be transformed with the $GL_2(\C)$-action into one of the form:
\[
\begin{bmatrix}
* & * & 0 & 0 \\
* & * & * & *
\end{bmatrix} .
\]
Thus, points in $X^6$ are $2$-dimensional complex vector subspaces of $\C^4$ whose intersection with the subspace \mbox{$\big\{(x,y,0,0):x,y\in\C\big\}$} has dimension at least $1$.
Identifying $\C P^n$ with points in $\C P^{n+1}$ with last homogeneous coordinate equal to $0$, we can view $X^6$ as a quadric in $\C P^4$.
By using the row-echelon form again, we see that each point of the 6-skeleton is represented by a unique matrix in the following list:
\[
\begin{array}{ccccccccc}
\begin{bmatrix}
* & 1 & 0 & 0 \\
* & 0 & * & 1
\end{bmatrix} &&
\begin{bmatrix}
* & 1 & 0 & 0 \\
* & 0 & 1 & 0
\end{bmatrix} &&
\begin{bmatrix}
1 & 0 & 0 & 0 \\
* & * & 1 & 0
\end{bmatrix} &&
\begin{bmatrix}
1 & 0 & 0 & 0 \\
0 & * & 1 & 0
\end{bmatrix} &&
\begin{bmatrix}
1 & 0 & 0 & 0 \\
0 & 1 & 0 & 0
\end{bmatrix} \\
\rule{0pt}{12pt}%
\text{\small (6-cell)} &&
\text{\small (4-cell)} &&
\text{\small (4-cell)} &&
\text{\small (2-cell)} &&
\text{\small (a point)}
\end{array}
\]
We see that there is one 6-cell, two 4-cells, one 2-cell and one point (the $0$-skeleton).
The $2$-cell together with the $0$-skeleton forms the subset of $\C P^5$ of equation $x_{14}=x_{23}=x_{24}=x_{34}=0$, which is $\C P^1$. Together with the two $4$-cells they form the subset of $\C P^5$ of equation $x_{24}=x_{34}=0$ and $x_{14}x_{23}=0$ (coming from Pl{\"u}cker equation), which means that either $x_{14}=0$ or $x_{23}=0$. If one of the two coordinates $x_{14}$ or $x_{23}$ is zero, we get a submanifold diffeomorphic to $\C P^2$. Their intersection is the $2$-skeleton $\C P^1$.
The filtration by skeleta is then
\begin{equation}\label{eq:classicalCW}
\{\text{pt.}\}\longrightarrow\C P^1\longrightarrow\C P^2\sqcup_{\C P^1}\C P^2\longrightarrow 
X^6\longrightarrow Gr(2,4) .
\end{equation}
This is the picture that we want to generalize to the quantum case, cf.~\eqref{eq:qCW}. In the dual language of algebras, of course all the embeddings of spaces become surjective \mbox{*-homomorphisms} of C*-algebras. The fact that each skeleton $X^k$ is obtained from previous skeleton $X^{k-1}$ by attaching $k$-cells becomes the condition that one has a pullback diagram of C*-algebras of the form
\[
\begin{tikzpicture}[scale=1.7]

\node (A) at (1,2) {$C(X^k)$};
\node (B) at (0,1) {$C(X^{k-1})$};
\node (C) at (2,1) {$\coprod C(B^k)$};
\node (D) at (1,0) {$\coprod C(S^{k-1})$};

\path[<-]
     	(B) edge (A)
		(C) edge (A)
		(D) edge (B)
		(D) edge[font=\footnotesize,inner sep=2pt] node[below right]{$\partial$} (C);

\end{tikzpicture} ,
\]
where $B^k$ is the $k$-dimensional closed unit ball, and each sphere $S^{k-1}$ is mapped to a ball $B^k$ via the natural boundary map, whose pullback is a surjective *-homomorphism $\partial:C(B^k)\to C(S^{k-1})$. In the noncommutative setting, in a CW-structure we allow $\partial$ to be replaced by a quotient map of C*-algebras that we can regard as a noncommutative version of the (dual of) a boundary map from a sphere to a ball (one can see \cite{DHMSZ20} for the precise definition). A minimal requirement for such a map is that it is between C*-algebras with the same K-theory as $B^k$ and $S^{k-1}$. We will see that for $Gr_q(2,4)$ this condition is satisfied: what we derive is a strict CW-structure in the sense of \cite{DHMSZ20}, see Theorem \ref{thm:qCW}.

\subsection{Some auxiliary quantum spaces}

From now on, we are interested in C*-algebras up to an isomorphism.
Recall that the graph of $C(S^{2n+1}_q)$ has $n+1$ vertices, here labelled $1,\ldots,n+1$, and one edge from $i$ to $j$ for all $i\leq j$ (Figure \ref{fig:graphSnq}). The fixed-point C*-subalgebra is $C(S^{2n+1}_q)^{U(1)}=C(\C P^n)$.
Removing the loop at the vertex $n+1$ from the graph in Figure \ref{fig:graphSnq} we get the graph C*-algebra $C(B^{2n}_q)$ describing a $2n$-dimensional closed quantum ball \cite{HS08}.

Let us now introduce the quantum spaces that will be the skeleta of the CW-structure of $Gr_q(2,4)$. These are listed in Table \ref{tab:1}. One can see the analogy with \eqref{eq:classicalCW}.

\begin{table}[h]
\begin{center}
\begin{tabular}{|l|c|}
\hline
\multicolumn{1}{|c|}{\bf Amplified graph} & \bf Compact quantum space \\
\hline
\hline
\begin{tikzpicture}[baseline={([yshift=-3pt]current bounding box.center)},font=\footnotesize,scale=0.7]

\node[main node] (1) at (0,0) {};
\node[main node] (2) at (2,0) {};
\node[main node] (3) at (4,1) {};
\node[main node] (4) at (4,-1){};
\node[main node] (5) at (6,0) {};
\node[main node] (6) at (8,0) {};
\node at (0,-1.2) {};
\node at (0,1.2) {};

\filldraw (1) circle (0.07);
\filldraw (2) circle (0.07);
\filldraw (3) circle (0.07);
\filldraw (4) circle (0.07);
\filldraw (5) circle (0.07);
\filldraw (6) circle (0.07);

\path[freccia] (1) edge node[fill=white] {$\infty$}  (2);
\path[freccia] (2) edge node[fill=white,sloped] {$\infty$}  (3);
\path[freccia] (2) edge node[fill=white,sloped] {$\infty$}  (4);
\path[freccia] (3) edge node[fill=white,sloped] {$\infty$}  (5);
\path[freccia] (4) edge node[fill=white,sloped] {$\infty$}  (5);
\path[freccia] (5) edge node[fill=white] {$\infty$}  (6);

\end{tikzpicture}
& $Gr_q(2,4)$ \\ \hline
\begin{tikzpicture}[baseline={([yshift=-3pt]current bounding box.center)},font=\footnotesize,scale=0.7]

\node[main node] (1) at (0,0) {};
\node[main node] (2) at (2,0) {};
\node[main node] (3) at (4,1) {};
\node[main node] (4) at (4,-1){};
\node[main node] (5) at (6,0) {};
\node at (0,-1.2) {};
\node at (0,1.2) {};

\filldraw (1) circle (0.07);
\filldraw (2) circle (0.07);
\filldraw (3) circle (0.07);
\filldraw (4) circle (0.07);
\filldraw (5) circle (0.07);

\path[freccia] (1) edge node[fill=white] {$\infty$}  (2);
\path[freccia] (2) edge node[fill=white,sloped] {$\infty$}  (3);
\path[freccia] (2) edge node[fill=white,sloped] {$\infty$}  (4);
\path[freccia] (3) edge node[fill=white,sloped] {$\infty$}  (5);
\path[freccia] (4) edge node[fill=white,sloped] {$\infty$}  (5);

\end{tikzpicture}
& $X^6_q$ \\ \hline
\begin{tikzpicture}[baseline={([yshift=-3pt]current bounding box.center)},font=\footnotesize,scale=0.7]

\node[main node] (1) at (0,0) {};
\node[main node] (2) at (2,0) {};
\node[main node] (3) at (4,1) {};
\node[main node] (4) at (4,-1){};
\node at (0,-1.2) {};
\node at (0,1.2) {};

\filldraw (1) circle (0.07);
\filldraw (2) circle (0.07);
\filldraw (3) circle (0.07);
\filldraw (4) circle (0.07);

\path[freccia] (1) edge node[fill=white] {$\infty$}  (2);
\path[freccia] (2) edge node[fill=white,sloped] {$\infty$}  (3);
\path[freccia] (2) edge node[fill=white,sloped] {$\infty$}  (4);

\end{tikzpicture}
& $\C P^2_q\sqcup_{\C P^1_q}\C P^2_q$ \\ \hline
\begin{tikzpicture}[baseline={([yshift=-2pt]current bounding box.center)},font=\footnotesize,scale=0.7]

\node[main node] (1) at (0,0) {};
\node[main node] (2) at (2,0) {};
\node at (0,-0.3) {};
\node at (0,0.3) {};

\filldraw (1) circle (0.07);
\filldraw (2) circle (0.07);

\path[freccia] (1) edge node[fill=white] {$\infty$}  (2);

\end{tikzpicture}
& $\C P^1_q$ \\ \hline
\begin{tikzpicture}[baseline={([yshift=-2pt]current bounding box.center)},font=\footnotesize,scale=0.7]

\node[main node] (1) at (0,0) {};
\node at (0,-0.3) {};
\node at (0,0.3) {};

\filldraw (1) circle (0.07);

\end{tikzpicture}
& a point \\ \hline
\end{tabular}
\end{center}
\caption{Skeleta.}\label{tab:1}
\end{table}

The quantum space $X^6_q$ has C*-algebra $C(X^6_q)$ that, by definition, is the graph C*-algebra of the second amplified graph in Table \ref{tab:1}.
For $Gr_q(2,4)$, $\C P^1_q$ and the point, their description using amplified graph C*-algebras is well-known and/or described previously in this paper. Finally, the C*-algebra $C(\C P^2_q\sqcup_{\C P^1_q}\C P^2_q)$ is defined as a pullback
\begin{equation}\label{eq:comparalo}
\begin{tikzpicture}[scale=2.3]

\node (a1) at (135:1) {$C(\C P_q^2)$};
\node (a2) at ($(45:1)+(135:1)$) {$C(\C P_q^2\sqcup_{\C P_q^1}\C P_q^2)$};
\node (b1) at (0,0) {$C(\C P_q^1)$};
\node (b2) at (45:1) {$C(\C P_q^2)$};

\path[-To] (a2) edge (a1) (b2) edge (b1) (a1) edge (b1) (a2) edge (b2);

\begin{scope}[yshift=0.6cm]
\draw (135:0.2) -- (0,0) -- (45:0.2);
\end{scope}

\end{tikzpicture} ,
\end{equation}
where the *-homomorphism $C(\C P^2_q)\to C(\C P_q^1)$ is the standard one (see e.g.~\cite{ADHT22}).
We now prove that this pullback is isomorphic to the graph C*-algebra of the third amplified graph in Table \ref{tab:1}.

\begin{prop}\label{prop:51}
One has
\begin{equation}\label{eq:toproveX4}
C(\C P^2_q\sqcup_{\C P^1_q}\C P^2_q)\cong
C^*\!\!\left(
\begin{tikzpicture}[baseline={([yshift=-2pt]current bounding box.center)},font=\scriptsize,scale=0.7]

\node[main node] (1) at (0,0) {};
\node[main node] (2) at (2,0) {};
\node[main node] (3) at (4,1) {};
\node[main node] (4) at (4,-1){};

\filldraw (1) circle (0.07);
\filldraw (2) circle (0.07);
\filldraw (3) circle (0.07);
\filldraw (4) circle (0.07);

\path[freccia] (1) edge node[fill=white] {$\infty$}  (2);
\path[freccia] (2) edge node[fill=white,sloped] {$\infty$}  (3);
\path[freccia] (2) edge node[fill=white,sloped] {$\infty$}  (4);

\end{tikzpicture}\;
\right) \cong C^*(G_1)^{U(1)},
\end{equation}
where $G_1$ is the graph in Figure \ref{fig:X4}.
\end{prop}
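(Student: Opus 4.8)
The plan is to treat the two isomorphisms in \eqref{eq:toproveX4} separately. Let $R$ be the relation on $V=\{1,2,3,4\}$ with arrows $\{(1,2),(2,3),(2,4)\}$. The right-hand isomorphism $C^*(F_R)\cong C^*(G_1)^{U(1)}$ is then nothing but Theorem \ref{thm:1}: the middle amplified graph in \eqref{eq:toproveX4} is exactly $F_R$, and $G_1=E_R$ is obtained by adjoining a loop at each of the four vertices. So no work is needed there beyond unwinding notation. The whole content lies in the left-hand isomorphism, which I would prove by realising $C^*(F_R)$ as the pullback \eqref{eq:comparalo} through the gauge-invariant ideal structure of $C^*(F_R)$.

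First I would identify the relevant quotients. The vertices $3$ and $4$ are sinks of $F_R$, so $\{3\}$, $\{4\}$ and $\{3,4\}$ are hereditary and saturated (saturation is vacuous, as $F_R$ has no regular vertices), and none of them has breaking vertices, since every infinite emitter still emits infinitely many edges after deleting the edges into the set. Hence by the quotient description of \cite{BHRS02} one has $U(1)$-equivariant isomorphisms
\begin{gather*}
C^*(F_R)/I_{\{4\}}\cong C^*(F_R\smallsetminus\{4\}),\qquad
C^*(F_R)/I_{\{3\}}\cong C^*(F_R\smallsetminus\{3\}),\\
C^*(F_R)/I_{\{3,4\}}\cong C^*(F_R\smallsetminus\{3,4\}).
\end{gather*}
Deleting vertex $4$ leaves the amplified path $1\to2\to3$, namely $\widetilde{F}_2$, so the first two quotients are two copies of $C(\C P^2_q)$ (Corollary \ref{cor:E} with $n=3$), while deleting both $3$ and $4$ leaves $1\to2=\widetilde{F}_1$, giving $C(\C P^1_q)$ (Corollary \ref{cor:E} with $n=2$). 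Since $I_{\{4\}},I_{\{3\}}\subseteq I_{\{3,4\}}$, each copy of $C(\C P^2_q)$ maps onto $C(\C P^1_q)$ by the further quotient that deletes the remaining sink, which I claim is precisely the standard map $C(\C P^2_q)\to C(\C P^1_q)$ of \eqref{eq:comparalo}; I would confirm this identification against \cite{ADHT22}.

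With $J_1:=I_{\{4\}}$ and $J_2:=I_{\{3\}}$, the argument reduces to the standard C*-algebraic pullback (Milnor-square) lemma: if a C*-algebra $B$ has closed ideals $J_1,J_2$ with $J_1\cap J_2=0$, then $b\mapsto(b+J_1,b+J_2)$ is an isomorphism of $B$ onto $(B/J_1)\times_{B/(J_1+J_2)}(B/J_2)$. Injectivity is exactly $J_1\cap J_2=0$, and surjectivity follows by lifting a matching pair $(x,y)$ and subtracting the difference of the two lifts, which lies in $J_1+J_2$; here $J_1+J_2$ is automatically closed, since the sum of two closed ideals in a C*-algebra is closed. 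Thus I only need $I_{\{3\}}\cap I_{\{4\}}=0$ and $I_{\{3\}}+I_{\{4\}}=I_{\{3,4\}}$. Both follow from the lattice isomorphism between hereditary saturated subsets and gauge-invariant ideals: intersection of ideals corresponds to intersection of sets, so $I_{\{3\}}\cap I_{\{4\}}=I_{\emptyset}=0$, while the closed sum corresponds to the saturated closure of the union, which here is just $\{3,4\}$. Feeding these into the lemma identifies $C^*(F_R)$ with the pullback defining $C(\C P^2_q\sqcup_{\C P^1_q}\C P^2_q)$.

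The main obstacle I anticipate is bookkeeping for the non-row-finite (amplified) graph rather than any deep difficulty. The clean statement $C^*(E)/I_H\cong C^*(E\smallsetminus H)$ recalled in Section \ref{sec:2} is stated only for row-finite $E$, so I must justify the no-breaking-vertices condition to invoke the general quotient theorem of \cite{BHRS02}, and likewise to guarantee that $H\mapsto I_H$ is a lattice isomorphism onto the gauge-invariant ideals. The only genuinely external point is checking that the ``delete the top sink'' quotients coincide with the standard maps $C(\C P^2_q)\to C(\C P^1_q)$ used to define the pullback; since removing vertex $3$ is the \emph{unique} hereditary-saturated route from $\C P^2_q$ to $\C P^1_q$, I expect this to match \cite{ADHT22} once conventions are unwound.
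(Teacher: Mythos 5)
Your treatment of the right-hand isomorphism contains a genuine (though easily repaired) error: $G_1$ is \emph{not} $E_R$ for your relation $R=\{(1,2),(2,3),(2,4)\}$. The graph $G_1$ of Figure~\ref{fig:X4} also contains the arrows $1\to 3$ and $1\to 4$, i.e.\ $G_1=E_{\overline{R}}$ where $\overline{R}$ is the transitive closure of $R$. Theorem~\ref{thm:1} applied to $R$ gives $C^*(F_R)\cong C^*(E_R)^{U(1)}$, which is not the asserted $C^*(G_1)^{U(1)}$, and you cannot bridge the difference through the isomorphism $C^*(E_R)\cong C^*(E_{\overline{R}})$, because such isomorphisms are not gauge-equivariant and do not pass to cores (this is exactly the subtlety the paper stresses for $L_{2,4}$ versus $\widetilde{L}_{2,4}$). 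The repair is one line and is precisely the paper's first step: by \cite[Cor.~3.9]{ERS12} one has $C^*(F_R)\cong C^*(F_{\overline{R}})$ on the \emph{amplified} side, and then Theorem~\ref{thm:1} applied to $\overline{R}$ gives $C^*(F_{\overline{R}})\cong C^*(E_{\overline{R}})^{U(1)}=C^*(G_1)^{U(1)}$. A second, minor slip: your justification that there are no breaking vertices (``every infinite emitter still emits infinitely many edges after deleting the edges into the set'') fails for $H=\{3,4\}$, where vertex $2$ emits \emph{no} edges outside $H$; the correct observation is that in an amplified graph the number of edges an emitter retains is either $0$ or infinite, never finite and nonzero, so $B_H=\emptyset$ for every hereditary saturated $H$.

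With these repairs, your argument for the left-hand isomorphism is sound and genuinely different from the paper's. The paper works on the Cuntz--Krieger side: it decomposes $G_1$ into two copies of the quantum $5$-sphere graph glued along a quantum $3$-sphere graph (an admissible decomposition in the sense of \cite{HRT18}), invokes \cite[Thm.~3.1]{HRT18} to get a $U(1)$-equivariant pullback $C^*(G_1)\cong C(S^5_q)\times_{C(S^3_q)}C(S^5_q)$, and passes to fixed-point subalgebras. You instead work directly on the amplified graph, combining the lattice of gauge-invariant ideals from \cite{BHRS02} with the elementary Milnor-square lemma; your checks ($I_{\{3\}}\cap I_{\{4\}}=I_\emptyset=0$, $I_{\{3\}}+I_{\{4\}}=I_{\{3,4\}}$, closedness of the sum) are all correct, and this route avoids \cite{HRT18} entirely. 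The trade-off lies in the point you flag at the end: in the paper's route the cospan maps are the vertex-removal quotients $C^*(L_5)^{U(1)}\to C^*(L_3)^{U(1)}$, which are \emph{by definition} the standard maps of \cite{ADHT22}, so comparison with \eqref{eq:comparalo} is immediate; in your route the identifications $C^*(\widetilde{F}_2)\cong C(\C P^2_q)$ come from Elliott classification (Cor.~\ref{cor:E}) and are abstract, so matching your cospan with the standard one requires an additional intertwining argument — for instance, realizing the evident dimension-group isomorphisms ($[P_i]\mapsto[P_i]$, compatible with the surjections on $K_0$) by C*-isomorphisms commuting exactly with the quotient maps, using that the correcting unitaries in an Elliott intertwining can be lifted along surjections of AF algebras. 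That step is standard but not free, and as written it is the remaining gap in your left-hand argument; your appeal to the uniqueness of the hereditary-saturated route from $\widetilde{F}_2$ to $\widetilde{F}_1$ does not by itself settle it.
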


\begin{proof}
It follows from \cite[Cor.~3.9]{ERS12} that we can add countably infinitely many arrows
from the first vertex on the left to the last two vertices in \eqref{eq:toproveX4} and get an isomorphic C*-algebra.
It follows then from Theorem \ref{thm:1} that the amplified graph C*-algebra in \eqref{eq:toproveX4} is isomorphic to $C^*(G_1)^{U(1)}$, where $G_1$ is the graph in Figure \ref{fig:X4}.

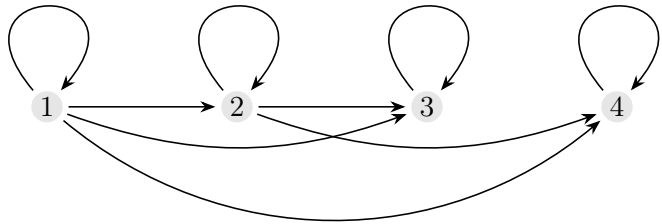
\begin{figure}[h]
\begin{tikzpicture}[inner sep=1pt,font=\small]

\clip (-0.7,1.5) rectangle (8.2,-1.7);
\node[main node] (1) {1};
\node (2) [main node,right of=1] {2};
\node (3) [main node,right of=2] {3};
\node (4) [main node,right of=3] {4};

\path[freccia] (1) edge[ciclo] (1);
\path[freccia] (2) edge[ciclo] (2);
\path[freccia] (3) edge[ciclo] (3);
\path[freccia] (4) edge[ciclo] (4);

\path[freccia] (1) edge (2);
\path[freccia] (1) edge[bend right=20] (3);
\path[freccia] (1) edge[bend right=40] (4);

\path[freccia] (2) edge (3);
\path[freccia] (2) edge[bend right=20] (4);

\end{tikzpicture}

\medskip

\caption{The graph $G_1$.}\label{fig:X4}
\end{figure}

Let $F_1$ be the subgraph of $G_1$ obtained by removing from it the vertex $3$ and all edges with target $3$,
and $F_2$ the one obtained by removing from it the vertex $4$ and all edges with target $4$.
Observe that both $F_1$ and $F_2$ are isomorphic to the graph of a quantum $5$-sphere (Figure \ref{fig:graphSnq}),
and $F_1\cap F_2$ is the graph of a quantum $3$-sphere.
The pair $\{F_1,F_2\}$ is an admissible decomposition of $G_4$ in the sense of \cite[Def.~2.1]{HRT18}.

\pagebreak

It follows from \cite[Theorem 3.1]{HRT18} that we have a $U(1)$-equivariant pullback diagram
\begin{center}
\begin{tikzpicture}[scale=2.5]

\node (a1) at (135:1) {$C(S^5_q)$};
\node (a2) at ($(45:1)+(135:1)$) {$C^*(G_1)$};
\node (b1) at (0,0) {$C(S^3_q)$};
\node (b2) at (45:1) {$C(S^5_q)$};

\path[-To] (a2) edge (a1) (b2) edge (b1) (a1) edge (b1) (a2) edge (b2);

\begin{scope}[yshift=0.6cm]
\draw (135:0.2) -- (0,0) -- (45:0.2);
\end{scope}

\end{tikzpicture} .
\end{center}
Passing to fixed-point subalgebras, we get the pullback diagram
\begin{center}
\begin{tikzpicture}[scale=2.5]

\node (a1) at (135:1) {$C(\C P^2_q)$};
\node (a2) at ($(45:1)+(135:1)$) {$C^*(G_1)^{U(1)}$};
\node (b1) at (0,0) {$C(\C P^1_q)$};
\node (b2) at (45:1) {$C(\C P^2_q)$};

\path[-To] (a2) edge (a1) (b2) edge (b1) (a1) edge (b1) (a2) edge (b2);

\begin{scope}[yshift=0.6cm]
\draw (135:0.15) -- (0,0) -- (45:0.15);
\end{scope}

\end{tikzpicture} .
\end{center}
By unicity of the pullback, comparing the latter diagram with \eqref{eq:comparalo} we deduce that 
$C^*(G_1)^{U(1)}$ is isomorphic to $C\smash{(\C P_q^2\sqcup_{\C P_q^1}\C P_q^2)}$.
\end{proof}

It follows from \cite[Cor.~3.9]{ERS12} and our Theorem \ref{thm:1} that $C(X_q^6)$ also has a presentation as the core of a Cuntz-Krieger algebra. More precisely, 

\begin{rem}\label{rem:G2}
$C(X_q^6)\cong C^*(G_2)^{U(1)}$, where $G_2$ is the graph in Figure \ref{fig:X6}.
\end{rem}

\begin{figure}[h]
\begin{tikzpicture}[inner sep=1pt,font=\small]

\clip (-0.7,1.5) rectangle (10.7,-2.8);
\node[main node] (1) {1};
\node (2) [main node,right of=1] {2};
\node (3) [main node,right of=2] {3};
\node (4) [main node,right of=3] {4};
\node (5) [main node,right of=4] {5};

\path[freccia] (1) edge[ciclo] (1);
\path[freccia] (2) edge[ciclo] (2);
\path[freccia] (3) edge[ciclo] (3);
\path[freccia] (4) edge[ciclo] (4);
\path[freccia] (5) edge[ciclo] (5);

\path[freccia] (1) edge (2);
\path[freccia] (1) edge[bend right=20] (3);
\path[freccia] (1) edge[bend right=40] (4);
\path[freccia] (1) edge[bend right=60] (5);

\path[freccia] (2) edge (3);
\path[freccia] (2) edge[bend right=20] (4);
\path[freccia] (2) edge[bend right=40] (5);

\path[freccia] (3) edge[bend right=20] (5);

\path[freccia] (4) edge (5);

\end{tikzpicture}

\medskip

\caption{The graph $G_2$.}\label{fig:X6}
\end{figure}
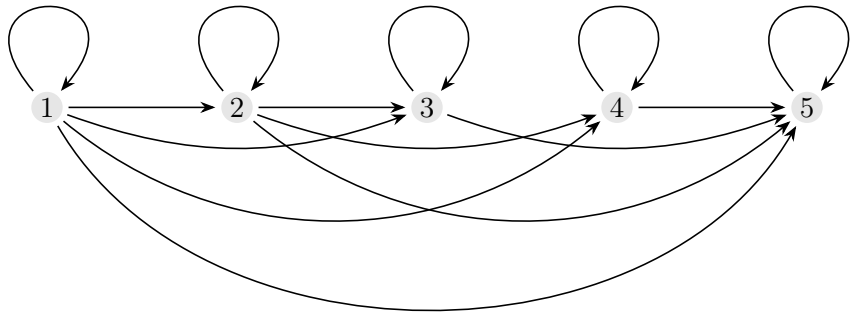

\subsection{The CW-structure}

For the CW-structure, it is useful to have a different presentation of both $C^*(L_{2,4})$ and $C^*(G_2)$ in terms of Toeplitz operators.

Let us introduce some notations first.
We denote by $(\ket{m})_{m\in\N}$ be the canonical basis of $\ell^2(\N)$, by $T$ the unilteral shift on $\ell^2(\N)$, given by
\[
T\ket{m}:=\ket{m+1} ,
\]
and we let $Q:=1-TT^*$ be the orthogonal projection onto $\ket{0}$ and $Q^\perp:=1-Q$. The unilateral shift generates the Toeplitz C*-algebra, that we denote here by $\mathcal{T}$. We denote by $\mathcal{K}\subset \mathcal{T}$ the ideal of compact operators, and by $\mathcal{T}^n$ and $\mathcal{K}^n$ the (spatial) tensor product of $n$ copies of $\mathcal{T}$ and $\mathcal{K}$, respectively. For decomposable tensors we shall use the standard leg numbering notation. Thus, for example $T_i\in\mathcal{T}^n$ is the operator
\[
\underbrace{1\otimes \ldots\otimes 1}_{i-1\text{ times}}\otimes T
\otimes \underbrace{1\otimes \ldots\otimes 1}_{n-i\text{ times}} ,
\]
etc. Note that
\begin{equation}\label{eq:relle}
T^*T=1 , \qquad
QT=T^*Q=0 .
\end{equation}

\begin{df}
Let $A=(a_{ij})$ be the adjacency matrix of the graph $L_{2,4}$. We denote by $\mathscr{A}(M^9_0)$ the free *-algebra
generated by elements $\{Z_i,Z_i^*:1\leq i\leq 6\}$ with relations
\begin{subequations}\label{eq:Zrelations}
\begin{alignat}{2}
Z_iZ_j &=0 && \forall\;i,j\;\text{such that}\;a_{ij}=0 , \label{eq:noref} \\
Z_i^*Z_j &=0 && \forall\;i\neq j , \label{eq:orthogonal} \\
Z_i^*Z_i &=\sum\nolimits_{j=1}^6a_{ij}Z_jZ_j^* , \label{eq:projections} \\
\sum\nolimits_{j=1}^6Z_jZ_j^* &=1 . \label{eq:sphere}
\end{alignat}
\end{subequations}
\end{df}
\noeqref{eq:noref}

Because of \eqref{eq:sphere}, the universal C*-seminorm is well-defined and the C*-enveloping algebras of $\mathscr{A}(M^9_0)$, which we denote by $C(M^9_0)$, exists (generators have norm $\leq 1$, and polynomials have finite norm, in any *-representation on a Hilbert space).

\begin{df}
We denote by $C(M^7_0)$ the quotient of $C(M^9_0)$ by the ideal generated by $Z_6$. The class of $Z_i$ in the quotient algebra will be denoted by $Y_i$, $1\leq i\leq 5$.
\end{df}

Note that by ideal in a C*-algebra we always mean a closed two-sided $*$-ideal.

We choose the notation $M^n_0$ to suggest that this is the $q\to 0$ limit of a family of $q$-deformation of some $n$-dimensional manifold $M^n$. The study of these $q$-deformations will be the topic of a future work.
Both $C(M^9_0)$ and $C(M^7_0)$ are $\Z$-graded, with grading assigning degree $+1$ to each $Z_i$ and $Y_i$ and $-1$ to $Z_i^*$ and $Y_i^*$, and there is an obvious $U(1)$-action inducing this grading.
The quotient map $C(M^9_0)\to C(M^7_0)$ is, by construction, $U(1)$-equivariant.

Observe that $L_{2,4}$ is very close to the graph of the $11$-dimensional quantum sphere $S^{11}_q$ (see Figure \ref{fig:graphSnq}): we get the graph of $S^{11}_q$ by adding to $L_{2,4}$ one extra arrow from the vertex $3$ to the vertex $4$.
The relations of $\mathscr{A}(M^9_0)$ are also very close to those of $\mathscr{A}(S^{11}_q)$ at $q=0$ (see e.g.~\cite[Eq.~(3.2)]{Dan23}).
Compared to $S^{11}_q$ at $q=0$, here we have an extra relation
\[
Z_3Z_4=0 ,
\]
which one could think as a kind of $q\to 0$ limit of a $q$-Pl{\"u}cker equation. 
We can think of the algebra $\mathscr{A}(M^9_0)$ as a kind of quantization of the submanifold of $S^{11}$ defined by the equation \eqref{eq:pluck}.

\begin{lemma}\label{lemma:54}
A $*$-homomorphism $\pi:C(M^9_0)\to\mathcal{T}^{4}\otimes C(S^1)$ is given on generators by
\begin{align*}
\pi(Z_1) &:= T_1 \otimes z,
\\
\pi(Z_2) &:= Q_1T_2T_3 \otimes z,
\\
\pi(Z_3) &:= Q_1T_2Q_3 \otimes z,
\\
\pi(Z_4) &:= Q_1Q_2T_3 \otimes z,
\\
\pi(Z_5) &:= Q_1Q_2Q_3T_4 \otimes z,
\\
\pi(Z_6) &:= Q_1Q_2Q_3Q_4 \otimes z,
\end{align*}
where $T$ and $Q$ are the Toeplitz and compact operators defined above, and $z$ is the unitary generator of $C(S^1)$.
\end{lemma}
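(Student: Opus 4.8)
The plan is to invoke the universal property of the C*-enveloping algebra. By definition $C(M^9_0)$ is the C*-envelope of the free $*$-algebra $\mathscr{A}(M^9_0)$ presented by the relations \eqref{eq:Zrelations}, and $\mathcal{T}^4\otimes C(S^1)$ is a concrete C*-algebra; hence it suffices to check that the six operators $\pi(Z_1),\dots,\pi(Z_6)$ satisfy those relations, after which $\pi$ extends uniquely to a $*$-homomorphism. Every verification reduces to the elementary identities \eqref{eq:relle}, together with $Q=Q^*=Q^2$, $Q^\perp=1-Q=TT^*$, and the commutativity of operators supported on different legs of $\mathcal{T}^4$. The unitary $C(S^1)$-factor $z$ plays no role in any of the relations and will be suppressed.

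First I would record each image as a word in the four legs, writing the leg entries of the $\mathcal{T}^4$-part of $\pi(Z_i)$ as a quadruple:
\[
Z_1\mapsto(T,1,1,1),\quad Z_2\mapsto(Q,T,T,1),\quad Z_3\mapsto(Q,T,Q,1),
\]
\[
Z_4\mapsto(Q,Q,T,1),\quad Z_5\mapsto(Q,Q,Q,T),\quad Z_6\mapsto(Q,Q,Q,Q).
\]
With this picture the orthogonality relations \eqref{eq:orthogonal} are immediate: for $i\neq j$ there is always a leg in which one word carries $T$ and the other carries $Q$, so the corresponding leg factor of $\pi(Z_i)^*\pi(Z_j)$ is $T^*Q$ or $Q^*T=QT$, both zero by \eqref{eq:relle}. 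For the first relation of \eqref{eq:Zrelations}, which is required exactly when $a_{ij}=0$, i.e.\ when $i>j$ or $(i,j)=(3,4)$, I would instead locate a leg in which $\pi(Z_i)$ carries $Q$ while $\pi(Z_j)$ carries $T$, so that the leg factor of $\pi(Z_i)\pi(Z_j)$ equals $QT=0$. Here the \emph{order} matters, since $TQ\neq0$; the single ``Pl\"ucker-type'' relation $Z_3Z_4=0$, which distinguishes $M^9_0$ from the sphere $S^{11}_q$, is the characteristic case and is settled in the third leg, where $Q_3T_3=0$.

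The substance of the proof is the Cuntz--Krieger relation \eqref{eq:projections} together with \eqref{eq:sphere}. I would first compute the six range projections
\[
\pi(Z_1)\pi(Z_1)^*=Q_1^\perp,\quad \pi(Z_2)\pi(Z_2)^*=Q_1Q_2^\perp Q_3^\perp,\quad \pi(Z_3)\pi(Z_3)^*=Q_1Q_2^\perp Q_3,
\]
\[
\pi(Z_4)\pi(Z_4)^*=Q_1Q_2Q_3^\perp,\quad \pi(Z_5)\pi(Z_5)^*=Q_1Q_2Q_3Q_4^\perp,\quad \pi(Z_6)\pi(Z_6)^*=Q_1Q_2Q_3Q_4,
\]
and the six source projections $\pi(Z_1)^*\pi(Z_1)=1$, $\pi(Z_2)^*\pi(Z_2)=Q_1$, $\pi(Z_3)^*\pi(Z_3)=Q_1Q_3$, $\pi(Z_4)^*\pi(Z_4)=Q_1Q_2$, $\pi(Z_5)^*\pi(Z_5)=Q_1Q_2Q_3$ and $\pi(Z_6)^*\pi(Z_6)=Q_1Q_2Q_3Q_4$, using $T^*T=1$ and $Q^*Q=Q$. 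Summing all six range projections and repeatedly using $Q^\perp+Q=1$ leg by leg collapses the total to $1$, giving \eqref{eq:sphere}; the analogous partial telescopings recover each source projection, yielding \eqref{eq:projections}.

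The main obstacle is simply the bookkeeping in \eqref{eq:projections}: for each $i$ one must read the row $(a_{ij})_j$ from the adjacency matrix of $L_{2,4}$ (Figure \ref{fig:graphM9})---in particular the branching at the vertices $3$ and $4$, which are both received from $2$ and both emit into $5$, and the absence of the edge $3\to4$---and then verify that the telescoping sum of the range projections indexed by $\{j:a_{ij}=1\}$ collapses to the source projection $\pi(Z_i)^*\pi(Z_i)$. For instance, for $i=3$ the admissible indices are $\{3,5,6\}$ and $Q_1Q_2^\perp Q_3+Q_1Q_2Q_3Q_4^\perp+Q_1Q_2Q_3Q_4=Q_1Q_3$, matching $\pi(Z_3)^*\pi(Z_3)$. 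Once every such telescoping is checked, all relations in \eqref{eq:Zrelations} hold on the operators, and the universal property delivers the $*$-homomorphism $\pi$.
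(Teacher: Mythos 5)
Your proposal is correct and follows the same route as the paper, whose proof simply states that the relations \eqref{eq:Zrelations} are verified for the operators $\pi(Z_i)$ using \eqref{eq:relle} and then invokes universality; your leg-by-leg bookkeeping (including the characteristic case $Z_3Z_4=0$ via $Q_3T_3=0$ and the telescoping sums matching each source projection) is exactly the computation the paper leaves to the reader, and all of your identities check out.
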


\begin{proof}
Using \eqref{eq:relle} one easily checks that the operators $Z'_i:=\pi(Z_i)$ and
$Z_i'^*=\pi(Z_i)^*$ satisfy the relations \eqref{eq:Zrelations}.
\end{proof}

\begin{prop}\label{prop:M09}
\begin{enumerate}
\item
There is a $U(1)$-equivariant isomorphism $\psi:C^*(L_{2,4})\to C(M^9_0)$ given on generators by:
\begin{equation}\label{eq:psi}
\psi(P_i)=Z_iZ_i^* , \qquad\quad \psi(S_{i,j})=Z_iZ_jZ_j^* .
\end{equation}
\item
The map $\pi$ in Lemma \ref{lemma:54} is injective.
\end{enumerate}
\end{prop}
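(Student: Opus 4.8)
\medskip

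The plan is to establish (1) first and then derive (2) from it as a formal consequence. For (1), the first task is to verify that the elements $\psi(P_i):=Z_iZ_i^*$ and $\psi(S_{i,j}):=Z_iZ_jZ_j^*$ (with $i=j$ corresponding to the loop at $i$) form a Cuntz-Krieger $L_{2,4}$-family, so that $\psi$ extends to a $*$-homomorphism by the universal property of $C^*(L_{2,4})$. The key preliminary observation is that each $\psi(P_i)$ is a projection and that these projections are mutually orthogonal: writing $E_i:=Z_iZ_i^*$, relation \eqref{eq:orthogonal} gives $E_iE_j=Z_i(Z_i^*Z_j)Z_j^*=0$ for $i\neq j$, and then \eqref{eq:sphere} yields $E_i=E_i\big(\sum_j E_j\big)=E_i^2$, so each $E_i$ is a self-adjoint idempotent. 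With this in hand, relation \eqref{eq:projections} shows that $Z_i^*Z_i=\sum_j a_{ij}E_j$ is again a projection, so each $Z_i$ is a partial isometry, and the three Cuntz-Krieger relations \eqref{eq:CK1}--\eqref{eq:CK3} follow by direct computation. For instance \eqref{eq:CK1} reduces to $\psi(S_{i,j})^*\psi(S_{i,j})=P_j(Z_i^*Z_i)P_j=a_{ij}P_j=P_j$ (as $a_{ij}=1$ whenever there is an edge $i\to j$), while summing $\psi(S_e)\psi(S_e)^*=Z_iP_jZ_i^*$ over all edges $e$ with $s(e)=i$ (every vertex of the finite sink-free graph $L_{2,4}$ is regular) gives $Z_i\big(\sum_j a_{ij}P_j\big)Z_i^*=Z_i(Z_i^*Z_i)Z_i^*=P_i$, which is \eqref{eq:CK3}. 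The mutual orthogonality of the ranges $\psi(S_e)\psi(S_e)^*$ follows from the orthogonality of the $P_j$ together with \eqref{eq:projections}. I expect these verifications to be the only genuinely computational part of the argument.

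To see that $\psi$ is an isomorphism I would argue as follows. The $U(1)$-action on $C(M^9_0)$ assigns degree $+1$ to each $Z_i$, so $\psi$ is gauge-equivariant. For injectivity I would invoke the gauge-invariant uniqueness theorem \cite{BPRS00}: it suffices that $\psi(P_i)=Z_iZ_i^*\neq 0$ for every $i$, and this is guaranteed by the explicit representation $\pi$ of Lemma \ref{lemma:54}, for which $\pi(Z_iZ_i^*)$ is visibly a nonzero element of $\mathcal{T}^{4}\otimes C(S^1)$ (for example $\pi(Z_1Z_1^*)=T_1T_1^*\otimes 1\neq 0$). Surjectivity is immediate: using that $Z_iZ_j=0$ when $a_{ij}=0$ together with the sphere relation \eqref{eq:sphere}, one has
\[
Z_i=Z_i\sum_{j}Z_jZ_j^*=\sum_{j:\,a_{ij}=1}Z_iZ_jZ_j^*=\sum_{e\,:\,s(e)=i}\psi(S_e),
\]
so every generator of $C(M^9_0)$ lies in the image of $\psi$, and $\psi$ is onto.

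Finally, (2) follows formally from (1). Consider the composite $\pi\circ\psi\colon C^*(L_{2,4})\to\mathcal{T}^4\otimes C(S^1)$. Equipping the target with the $U(1)$-action that is trivial on $\mathcal{T}^4$ and rotates the generator $z\mapsto uz$ of $C(S^1)$, both $\psi$ and $\pi$ are equivariant, hence so is $\pi\circ\psi$; moreover $\pi\circ\psi(P_i)=\pi(Z_iZ_i^*)\neq 0$ for every $i$. By the gauge-invariant uniqueness theorem again, $\pi\circ\psi$ is injective, and since $\psi$ is an isomorphism this forces $\pi$ to be injective. The only real obstacle in the whole argument is the bookkeeping in the Cuntz-Krieger verification of Step (1), in particular the identity \eqref{eq:CK3} and the orthogonality of ranges; once the projection identity $E_i=E_i^2$ is in place, both injectivity statements are one-line applications of the gauge-invariant uniqueness theorem.
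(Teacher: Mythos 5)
Your proposal is correct and follows essentially the same route as the paper's proof: verify that the $Z_iZ_i^*$ are mutually orthogonal projections from relations \eqref{eq:orthogonal} and \eqref{eq:sphere}, check the Cuntz--Krieger relations via \eqref{eq:projections}, obtain surjectivity by writing each $Z_i$ as the sum of the $\psi(S_{i,j})$ over edges emitted by $i$, and deduce both injectivity statements from the gauge-invariant uniqueness theorem using the nonvanishing of the images of the vertex projections under $\pi$, with $\pi=(\pi\circ\psi)\circ\psi^{-1}$ giving part (2). The only cosmetic differences are that you establish idempotency $E_i=E_i^2$ (invoking the standard fact that $Z_i$ is then a partial isometry) where the paper directly derives $Z_i=Z_iZ_i^*Z_i$ by multiplying \eqref{eq:sphere} by $Z_i$, and that you apply the uniqueness theorem to $\psi$ itself with target $C(M^9_0)$ rather than to the composite $\pi\circ\psi$, both of which are equivalent in substance.
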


\begin{proof}
If we multiply \eqref{eq:sphere} on both sides by $Z_i$ from the right and use \eqref{eq:orthogonal} we find:
\[
Z_i\stackrel{\eqref{eq:sphere}}{=}\sum\nolimits_{j=1}^6Z_jZ_j^*Z_i\stackrel{\eqref{eq:orthogonal}}{=}Z_iZ_i^*Z_i ,
\]
proving that $Z_i$ is a partial isometry, and $Z_iZ_i^*$ is a projection. From \eqref{eq:orthogonal} it follows that 
these projections are orthogonal: $(Z_iZ_i^*)(Z_jZ_j^*)=0$ for $i\neq j$.

We now check that $\{\psi(P_i),\psi(S_{i,j})\}$ in \eqref{eq:psi} is a Cuntz--Krieger $L_{2,4}$-family, so that the *-homomorphism $\psi$ is well-defined. We already know that $\{\psi(P_i)\}$ is a family of orthogonal projections. Moreover, for every arrow $(i,j)$ in $L_{2,4}$,
\[
\psi(S_{i,j})^*\,\psi(S_{i,j})=Z_jZ_j^*(Z_i^*Z_i)Z_jZ_j^*\stackrel{\eqref{eq:projections}}{=}
\sum\nolimits_{k=1}^6a_{ik}Z_jZ_j^*Z_kZ_k^* Z_jZ_j^*\stackrel{\eqref{eq:orthogonal}}{=}
a_{ij}Z_jZ_j^*Z_jZ_j^* Z_jZ_j^* .
\]
Now $a_{ij}=1$ since $(i,j)$ is an arrow, and we get:
\[
\psi(S_{i,j})^*\,\psi(S_{i,j})=\psi(P_j)^3=\psi(P_j) .
\]
As a consequence,
\[
\psi(S_{i,j})\psi(S_{i,j})^*\,\psi(S_{i,j})=\psi(S_{i,j})\psi(P_j)=Z_i(Z_jZ_j^*)^2=Z_iZ_jZ_j^*=\psi(S_{i,j}) ,
\]
so that $\psi(S_{i,j})$ is a partial isometry. Finally,
\begin{multline*}
\sum\nolimits_{j=1}^6a_{ij}\psi(S_{i,j})\psi(S_{i,j})^*=
\sum\nolimits_{j=1}^6a_{ij}Z_i(Z_jZ_j^*)^2Z_i^* \\
=Z_i\Big(\sum\nolimits_{j=1}^6a_{ij}Z_jZ_j^*\Big)Z_i^*
\stackrel{\eqref{eq:projections}}{=}Z_iZ_i^*Z_iZ_i^*=Z_iZ_i^*=\psi(P_i) .
\end{multline*}
But the left hand side is exactly the sum over all arrows starting at $i$. Hence, all the Cuntz-Krieger relations are satisfied.

From \eqref{eq:projections},
\[
\sum\nolimits_{j=1}^6a_{ij}\psi(S_{i,j})=Z_i\sum\nolimits_{j=1}^6a_{ij}Z_jZ_j^*=Z_iZ_i^*Z_i=Z_i .
\]
Thus, the image of $\psi$ contains all the generators of $C(M^9_0)$, proving that $\psi$ is surjective.

It remains to prove that $\psi$ is injective.
Notice that the $*$-homomorphism in Lemma \ref{lemma:54} is $U(1)$-equivariant w.r.t.~the $U(1)$-action on $\mathcal{T}^{4}\otimes C(S^1)$ coming from the obvious action on the rightmost factor. The composition $\pi\circ\psi:C^*(L_{2,4})\to\mathcal{T}^{4}\otimes C(S^1)$
sends the vertex projections to the following elements
\begin{align*}
\pi(\psi(P_1)) &= Q_1^\perp \otimes 1,
\\
\pi(\psi(P_2)) &= Q_1Q_2^\perp Q_3^\perp \otimes 1,
\\
\pi(\psi(P_3)) &= Q_1Q_2^\perp Q_3 \otimes 1,
\\
\pi(\psi(P_4)) &= Q_1Q_2Q_3^\perp  \otimes 1,
\\
\pi(\psi(P_5)) &= Q_1Q_2Q_3Q_4^\perp \otimes 1,
\\
\pi(\psi(P_6)) &= Q_1Q_2Q_3Q_4 \otimes 1 .
\end{align*}
Since these are all different from $0$, from the Gauge-Invariant Uniqueness Theorem (see e.g.~\cite{R05}) we deduce that $\pi\circ\psi$ is injective, and then $\psi$ is injective. This proves that $\psi$ is an isomorphism, hence $\pi=(\pi\circ\psi)\circ \psi^{-1}$ is injective as well.
\end{proof}

\begin{lemma}\label{lemma:57}
\begin{enumerate}
\item\label{en:571}
There is a $U(1)$-equivariant diagram of C*-algebras
\begin{equation}\label{eq:pb57}
\begin{tikzpicture}[>=To,xscale=4,yscale=2]

\node (f) at (0,2) {$C^*(L_{2,4})$};
\node (g) at (1,2) {$C^*(G_2)$};
\node (a) at (0,1) {$C(M^9_0)$};
\node (b) at (0,0) {$\mathcal{T}^{4}\otimes C(S^1)$};
\node (c) at (1,1) {$C(M^7_0)$};
\node (d) at (1,0) {$\mathcal{T}^{4}/\mathcal{K}^{4}\otimes C(S^1)$};

\draw[->>] (f) -- (g);
\draw[->>] (a) --node [above,font=\footnotesize] {$q$} (c);
\draw[->>] (b) -- (d);
\draw[right hook->] (a) -- node[left,font=\footnotesize] {$\smash[t]{\pi}$} (b);
\draw[right hook->] (c) -- node[right,font=\footnotesize] {$\smash[t]{\widetilde{\pi}}$} (d);
\draw[->] (f) -- node[left,font=\footnotesize] {$\smash[t]{\psi}$} (a);
\draw[->] (g) -- node[right,font=\footnotesize] {$\smash[t]{\widetilde{\psi}}$} (c);

\end{tikzpicture} ,
\end{equation}
where $\pi$ is the map in Lemma \ref{lemma:54}, $\psi$ is the map in Prop.~\ref{prop:M09}, the $U(1)$ action on the C*-algebras in the third row is on the rightmost factor and the horizontal arrows are canonical quotient maps.

\item\label{en:572} The map $\widetilde{\psi}$ is an isomorphism.

\item\label{en:573} The map $\pi$ restricts to a bijection between $\ker(q)$ and $\mathcal{K}^4\otimes C(S^1)$.

\item\label{en:574} The map $\widetilde\pi$ is injective.

\item\label{en:575} The outer rectangle and the inner squares in \eqref{eq:pb57} are pullback diagrams.
\end{enumerate}
\end{lemma}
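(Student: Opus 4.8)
The plan is to first construct the two arrows in \eqref{eq:pb57} that have not yet been defined, then reduce the whole statement to part~\ref{en:573}, and finally read off the pullback assertions formally.

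\emph{Parts \ref{en:571} and \ref{en:572}.} The top horizontal arrow is the quotient of $C^*(L_{2,4})$ by the gauge-invariant ideal $I_{\{6\}}$ attached to the set $H=\{6\}$, which is hereditary (vertex $6$ emits only its loop) and saturated (every vertex emits a loop). Removing $6$ and all edges into it turns $L_{2,4}$ into $G_2$, so $C^*(L_{2,4})/I_{\{6\}}\cong C^*(G_2)$. Under $\psi$ one has $\psi(I_{\{6\}})=\psi((P_6))=(Z_6Z_6^*)=(Z_6)=\ker q$, whence $\widetilde{\psi}$ is the isomorphism induced on the quotients, proving \ref{en:572}. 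For the lower square, writing $\beta$ for the bottom quotient, one has $\pi(Z_6)=Q_1Q_2Q_3Q_4\otimes z\in\mathcal{K}^4\otimes C(S^1)=\ker\beta$, so $\beta\circ\pi$ annihilates $Z_6$ and factors through $q$, defining $\widetilde{\pi}$. Commutativity is built in, and every arrow is $U(1)$-equivariant because the action on the bottom row lives on the rightmost $C(S^1)$-leg, on which $\psi$ and $\pi$ are equivariant by Prop.~\ref{prop:M09} and Lemma~\ref{lemma:54}.

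\emph{Part \ref{en:573}, the crux.} The inclusion $\pi(\ker q)\subseteq\mathcal{K}^4\otimes C(S^1)$ is immediate. For the reverse inclusion I would show that every matrix unit $\ketbra{a}{b}\otimes z^n$, with $a,b\in\N^4$, lies in $\pi(\ker q)$. Since $\pi(Z_6^m)=\ketbra{0}{0}\otimes z^m$ for all $m\in\Z$ (using adjoints for $m<0$), the vacuum corner $\ketbra{0}{0}\otimes C(S^1)$ is covered and powers of $z$ are freely adjustable. The genuine obstacle is that $Z_2,Z_3,Z_4$ couple the two parallel legs $2$ and $3$: the Pl\"ucker-type relation $Z_3Z_4=Z_4Z_3=0$ forces the products of $Q_2T_3$ and $T_2Q_3$ to vanish, so legs $2$ and $3$ cannot be raised to different levels by a naive word. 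The remedy is to raise the \emph{leading} leg first: to reach level $a_2$ on leg $2$ and $a_3$ on leg $3$ with, say, $a_2>a_3$, apply $Z_3^{\,a_2-a_3}$ (raising leg $2$ while capping leg $3$ at $0$) followed by $Z_2^{\,a_3}$ (raising both together), and symmetrically with $Z_4,Z_2$ when $a_3>a_2$; legs $1$ and $4$ are then adjusted harmlessly by $Z_1$ and $Z_5$. This yields, for each $a$, an explicit word $W_a\in(Z_6)$ with $\pi(W_a)=\ketbra{a}{0}\otimes z^{p}$ for a suitable $p$. Adjusting $z$-powers by the vacuum corner and forming $(\ketbra{a}{0})(\ketbra{0}{b})=\ketbra{a}{b}$ places every $\ketbra{a}{b}\otimes z^n$ inside $\pi(\ker q)$. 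As $\pi$ is injective its range is closed, so $\pi(\ker q)$ is a closed $*$-subalgebra containing a dense set of matrix units, hence equals $\mathcal{K}^4\otimes C(S^1)$; injectivity of $\pi|_{\ker q}$ is inherited from Prop.~\ref{prop:M09}. This explicit generation, forced through the leg-$2$/leg-$3$ coupling, is the one substantial step.

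\emph{Parts \ref{en:574} and \ref{en:575}.} For \ref{en:574}, injectivity of $\pi$ together with \ref{en:573} gives $\pi^{-1}(\mathcal{K}^4\otimes C(S^1))=\ker q$; since $\widetilde{\pi}\circ q=\beta\circ\pi$, any class in $\ker\widetilde{\pi}$ lifts to some $x$ with $\pi(x)\in\ker\beta=\mathcal{K}^4\otimes C(S^1)$, whence $x\in\ker q$ and the class is $0$. For \ref{en:575} I would apply the standard pullback criterion. In the lower square the comparison map into $(\mathcal{T}^4\otimes C(S^1))\times_{D}C(M^7_0)$ is injective because $\pi$ is; for surjectivity, given a compatible pair $(c,b)$, lift $b$ through the surjection $q$ to some $a_0$, note $c-\pi(a_0)\in\ker\beta=\pi(\ker q)$ by \ref{en:573}, write it as $\pi(k)$ with $k\in\ker q$, and put $a=a_0+k$, so $\pi(a)=c$ and $q(a)=b$. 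Thus the lower square is a pullback. The upper square has both vertical arrows $\psi,\widetilde{\psi}$ isomorphisms, hence is a pullback, and the pasting lemma for pullbacks then gives that the outer rectangle is one as well.
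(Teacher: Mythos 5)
Your proposal is correct and follows essentially the same route as the paper: the induced maps $\widetilde{\psi},\widetilde{\pi}$ are obtained exactly as in the paper's proof, your ``raise the leading leg first'' words are precisely the paper's partial isometries $V(n_1,\ldots,n_4)$ (with the same $Z_3$/$Z_4$ case split forced by $Z_3Z_4=0$), and the vacuum-corner trick with powers of $Z_6$ matches the paper's treatment of the $C(S^1)$ factor. The only cosmetic difference is in part (v), where you verify the pullback property by a direct diagram chase instead of invoking Pedersen's criterion \cite[Prop.~3.1]{P99}, but the substantive condition checked --- $\ker\beta\subseteq\pi(\ker q)$, i.e.\ part (iii) --- is identical.
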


\begin{proof}
We already established in Lemma \ref{lemma:54} that $\pi$ is injective and $\psi$ is an isomorphism.

The C*-algebra $C^*(G_2)$ is the quotient of $C^*(L_{2,4})$ by the ideal generated by the vertex projection $P_6$. The C*-algebra $C(M^7_0)$ is the quotient of $C(M^9_0)$ by the ideal generated by $Z_6$, which is the same as ideal generated by $Z_6Z_6^*$,
since $Z_6=(Z_6Z_6^*)Z_6$. Since $\psi$ maps $P_6$ to $Z_6Z_6^*$, it maps one ideal to the other and induces a well-defined isomorphism $\widetilde{\psi}$ between quotient algebras. This proves \ref{en:572}.

The *-homomorphism $\pi$ maps $Z_6$ into $\mathcal{K}^4\otimes C(S^1)$.
So, it maps the kernel of the quotient map $q:C(M^9_q)\to C(M^7_q)$ into 
$\mathcal{K}^4\otimes C(S^1)$ and induces a well-defined *-homomorphism $\widetilde{\pi}$ between quotient algebras. This completes the proof of \ref{en:571}.

We now pass to point \ref{en:573}. 
We already know that $\pi(\ker q)\subseteq\mathcal{K}^4\otimes C(S^1)$.
We need to show the opposite inclusion.
Since $\pi$ is injective, in the rest of the proof we omit the representation symbol $\pi$ and identify $Z_i$ with $\pi(Z_i)$.
For $n_1,\ldots,n_4\in\N$, define the following partial isometries in $\mathcal{T}^4\otimes C(\mathbb{S}^1)$:
\[
V(n_1,\ldots,n_4) =\begin{cases}
Z_1^{n_1}Z_2^{n_2}Z_4^{n_3-n_2}Z_5^{n_4} & \text{if }n_2\leq n_3 ,
\\[2pt]
Z_1^{n_1}Z_2^{n_3}Z_3^{n_2-n_3}Z_5^{n_4} & \text{if }n_2\geq n_3 .
\end{cases}
\]
Let
\[
R_{n_1,\ldots,n_4}^{m_1,\ldots,m_4}:=T_1^{n_1}T_2^{n_2}T_3^{n_3}T_4^{n_4}
Q_1Q_2Q_3Q_4
(T_1^{m_1}T_2^{m_2}T_3^{m_3}T_4^{m_4})^* .
\]
If $n_1,\ldots,n_4,m_1,\ldots,m_4$ are not all zero, then
\[
R_{n_1,\ldots,n_4}^{m_1,\ldots,m_4}\otimes 1=V(n_1,\ldots,n_4)Z_6^{m_1+\max\{m_2,m_3\}+m_4}(Z_6^*)^{n_1+\max\{n_2,n_3\}+n_4}V(m_1,\ldots,m_4) ,
\]
while
\[
R_{0,\ldots,0}^{0,\ldots,0}\otimes 1=Z_6Z_6^* .
\]
In both cases, as shown by the above formulas,
$R_{n_1,\ldots,n_4}^{m_1,\ldots,m_4}\otimes 1$ belongs to the ideal of $C(M^9_0)$ generated by $Z_6$, that is $\ker(q)$. 

It is not difficult to see that, as an operator on $\ell^2(\N^4)$, one has
\begin{equation}\label{eq:rank1}
R_{n_1,\ldots,n_4}^{m_1,\ldots,m_4}=\ketbra{n_1,\ldots,n_4}{m_1,\ldots,m_4} ,
\end{equation}
where $\ket{n_1,\ldots,n_4}=\ket{n_1}\otimes\ldots\otimes\ket{n_4}$ is the canonical basis of $\ell^2(\N)\otimes\ell^2(\N)\otimes\ell^2(\N)\otimes\ell^2(\N)$, 
that we identify with $\ell^2(\N^4)$,
and we use the ``bra-ket'' notation $\ketbra{v_1}{v_2}$ for the rank $1$ operator $w\mapsto \inner{v_2,w}v_1$.

Since the rank $1$ operators in \eqref{eq:rank1} generate $\mathcal{K}^{\otimes 4}=\mathcal{K}(\ell^2(\N^4))$ (the C*-algebra of compact operators on $\ell^2(\N^4)$ up to the above-mentioned identification), we deduce that any operator of the form $b\otimes 1$, with $b$ compact belongs, to $\pi(\ker q)$.
Any finite rank operator can be written as a finite sum $\sum_ia_iR_{0,\ldots,0}^{0,\ldots,0}b_i$
with $a_i,b_i$ compact ($a_i=\ketbra{v_i}{0,0,0,0}$ and $b_i=\ketbra{0,0,0,0}{w_i}$ for some unit vectors $v_i,w_i$). From
\[
a_iR_{0,\ldots,0}^{0,\ldots,0}b_i\otimes z^k=\begin{cases}
(a_i\otimes 1)
Z_6^k(b_i\otimes 1) & \text{if }k\geq 0 ,\\
(a_i\otimes 1)
(Z_6^*)^{-k}(b_i\otimes 1) & \text{if }k<0 ,
\end{cases}
\]
we deduce that $\pi(\ker q)$ contains the tensor product of finite rank operators on $\ell^2(\N^4)$ with the algebra of polynomial functions in $z,z^*$. Since this tensor product is dense in $\mathcal{K}^{\otimes 4}\otimes C(\mathbb{S}^1)$, this concludes the proof of \ref{en:573}.

Concerning point \ref{en:574}, from the diagram \eqref{eq:pb57} we see that:
\[
\ker(\widetilde\pi)=q\Big(\pi^{-1}\big(\mathcal{K}^{\otimes 4}\otimes C(\mathbb{S}^1)\big)\Big) .
\]
But from point \ref{en:573} we know that $\pi^{-1}\big(\mathcal{K}^{\otimes 4}\otimes C(\mathbb{S}^1)\big)=\ker q$, thus $\ker(\widetilde\pi)=0$.

It remains to prove point \ref{en:575}. It is enough to show that the bottom square is a pullback, since $\psi$ and $\widetilde\psi$ are isomorphism. For a commutative square of C*-algebras,
\begin{center}
\begin{tikzpicture}[>=To,scale=2]

\node (a) at (0,1) {$X$};
\node (b) at (1,1) {$B$};
\node (c) at (0,0) {$A$};
\node (d) at (1,0) {$C$};

\draw[->] (a) -- node[left] {$\delta$} (c);
\draw[->] (a) -- node[above] {$\gamma$} (b);
\draw[->] (b) -- node[right] {$\beta$} (d);
\draw[->] (c) -- node[below] {$\alpha$} (d);

\end{tikzpicture} \raisebox{-5pt}{,}
\end{center}
necessary and sufficient conditions to have a pullback diagram are given in \cite[Prop.~3.1]{P99}. If $\alpha$ and $\gamma$ are surjective and $\delta$ is injective, simple set-theoretic arguments show that the conditions in \cite[Prop.~3.1]{P99} reduce to the single condition that
\begin{equation}\label{eq:immediatelyfollows}
\ker\alpha\subseteq\delta(\ker\gamma) .
\end{equation}
In the present case, $\gamma=q$, $\delta=\pi$, $\ker\alpha=\mathcal{K}^{\otimes 4}\otimes C(\mathbb{S}^1)$, and \eqref{eq:immediatelyfollows} immediately follows from \ref{en:573}.
\end{proof}

\begin{lemma}\label{lemma:58}
There is a $U(1)$-equivariant pullback diagram of C*-algebras
\begin{equation}\label{eq:pb58}
\begin{tikzpicture}[>=To,xscale=3.5,yscale=2]

\node (a) at (0,1) {$C^*(G_2)$};
\node (b) at (1,1) {$C^*(G_1)$};
\node (c) at (0,0) {$\mathcal{T}^{3}\otimes C(S^1)$};
\node (d) at (1,0) {$\mathcal{T}^{3}/\mathcal{K}^{3}\otimes C(S^1)$};

\draw[->>] (a) -- (b);
\draw[->>] (c) -- (d);
\draw[right hook->] (a) -- (c);
\draw[right hook->] (b) -- (d);

\end{tikzpicture} .
\end{equation}
\end{lemma}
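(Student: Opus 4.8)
The plan is to transcribe, with one Toeplitz factor fewer, the argument used in Lemma~\ref{lemma:57} for the pair $\big(C^*(L_{2,4}),C^*(G_2)\big)$. Since $G_2$ is obtained from $L_{2,4}$ by deleting the terminal vertex $6$, Lemma~\ref{lemma:57}\,(ii) lets me identify $C^*(G_2)\cong C(M^7_0)$ and work with the generators $Y_1,\dots,Y_5$. First I would produce a faithful $U(1)$-equivariant representation
\[
\pi':C^*(G_2)\longrightarrow\mathcal{T}^{3}\otimes C(S^1)
\]
playing the role that $\pi\circ\psi$ played in Prop.~\ref{prop:M09}. Imitating Lemma~\ref{lemma:54}, I would set
\begin{align*}
\pi'(Y_1) &:= T_1\otimes z, & \pi'(Y_2) &:= Q_1T_2T_3\otimes z, & \pi'(Y_3) &:= Q_1T_2Q_3\otimes z, \\
\pi'(Y_4) &:= Q_1Q_2T_3\otimes z, & \pi'(Y_5) &:= Q_1Q_2Q_3\otimes z. &&
\end{align*}
These are exactly the formulas of Lemma~\ref{lemma:54} with the fourth Toeplitz factor and the generator $Z_6$ suppressed, so well-definedness (the relations \eqref{eq:Zrelations} for the adjacency matrix of $G_2$) follows from the identical computation using \eqref{eq:relle}, and equivariance is clear since each generator carries a single factor of $z$. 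For injectivity I would argue as in Prop.~\ref{prop:M09}: the vertex projections map to the pairwise distinct nonzero elements $Q_1^\perp$, $Q_1Q_2^\perp Q_3^\perp$, $Q_1Q_2^\perp Q_3$, $Q_1Q_2Q_3^\perp$, $Q_1Q_2Q_3$ (each tensored with $1$), whence the Gauge-Invariant Uniqueness Theorem \cite{R05} gives that $\pi'$ is injective.

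Next I would identify $C^*(G_1)$ (Figure~\ref{fig:X4}) with the quotient of $C^*(G_2)$ (Figure~\ref{fig:X6}) by the ideal generated by the vertex projection $P_5$ — equivalently by $Y_5$, since $Y_5=Y_5Y_5^*Y_5$ — calling $q'$ the quotient map. The crucial step, the analogue of Lemma~\ref{lemma:57}\,(iii), is to show that $\pi'$ carries $\ker q'$ bijectively onto $\mathcal{K}^{3}\otimes C(S^1)$. One inclusion is immediate because $\pi'(Y_5)=Q_1Q_2Q_3\otimes z\in\mathcal{K}^{3}\otimes C(S^1)$. For the reverse inclusion I would, exactly as in Lemma~\ref{lemma:57}, introduce partial isometries
\[
V'(n_1,n_2,n_3):=\begin{cases}Y_1^{n_1}Y_2^{n_2}Y_4^{n_3-n_2}&\text{if }n_2\leq n_3,\\ Y_1^{n_1}Y_2^{n_3}Y_3^{n_2-n_3}&\text{if }n_2\geq n_3,\end{cases}
\]
and check, using \eqref{eq:relle}, that sandwiching suitable powers of $Y_5$ and $Y_5^*$ between $V'(n)$ and $V'(m)^*$ produces the rank-one operators $\ketbra{n_1,n_2,n_3}{m_1,m_2,m_3}$ on $\ell^2(\N^3)$, just as in \eqref{eq:rank1}; here the projection $Q_3$ supplied by $Y_5$ is what turns the free shift in the third leg into a rank-one operator. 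These generate $\mathcal{K}^{3}=\mathcal{K}(\ell^2(\N^3))$, and tensoring with the polynomials in $z,z^*$ (coming from powers of $Y_5$ and $Y_5^*$) yields a dense subalgebra of $\mathcal{K}^{3}\otimes C(S^1)$ lying in $\pi'(\ker q')$; hence $\pi'(\ker q')=\mathcal{K}^{3}\otimes C(S^1)$.

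Finally I would assemble the pullback. The bijection above immediately yields that the induced map $\widetilde{\pi}':C^*(G_1)\to\mathcal{T}^{3}/\mathcal{K}^{3}\otimes C(S^1)$ is injective, since $\ker\widetilde{\pi}'=q'\big((\pi')^{-1}(\mathcal{K}^{3}\otimes C(S^1))\big)=q'(\ker q')=0$. Then, writing $\alpha$ for the bottom quotient $\mathcal{T}^{3}\otimes C(S^1)\to\mathcal{T}^{3}/\mathcal{K}^{3}\otimes C(S^1)$, I have $\ker\alpha=\mathcal{K}^{3}\otimes C(S^1)=\pi'(\ker q')$, so Pedersen's criterion \cite[Prop.~3.1]{P99} — in the reduced form $\ker\alpha\subseteq\pi'(\ker q')$, valid when the top and bottom maps are surjective and the left map injective, exactly as in the proof of Lemma~\ref{lemma:57}\,(v) — shows the square is a pullback, and it is $U(1)$-equivariant by construction. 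I expect the only genuine work to be the explicit verification that the $V'$ reach every basis vector of $\ell^2(\N^3)$, since the surviving diamond relation $Y_3Y_4=0$ makes the two-leg branching argument go through verbatim and everything else transcribes directly from the six-vertex case.
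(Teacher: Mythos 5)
Your proposal is correct and takes essentially the same route as the paper: the published proof of Lemma~\ref{lemma:58} is exactly this argument in sketch form, giving the same five formulas $Y_1\mapsto T_1\otimes z,\ \dots,\ Y_5\mapsto Q_1Q_2Q_3\otimes z$ and then deferring to an ``almost verbatim'' repetition of the proof of Lemma~\ref{lemma:57}. Your explicit details --- the identification of $\ker q'$ as the ideal generated by $P_5$ (equivalently $Y_5$), the three-leg partial isometries $V'(n_1,n_2,n_3)$ producing the rank-one operators with $Y_5$ supplying the missing $Q_i$'s, and the reduction of Pedersen's criterion to $\ker\alpha\subseteq\pi'(\ker q')$ --- are precisely the steps the paper leaves implicit, and they all check out.
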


\begin{proof}
The proof is completely analogous to that of Lemma \ref{lemma:57}, and we will only sketch it. For starters, $C^*(G_2)\cong C(M^7_0)$ is generated by the class $Y_i$ of $Z_i$, for $1\leq i\leq 5$. They satisty the relations obtained from \eqref{eq:Zrelations} by putting $Z_6$ equal to $0$.
In parallel with Lemma \ref{lemma:54}, one construct a map to $\mathcal{T}^3\otimes C(S^1)$ given on generators by
\begin{align*}
Y_1 &\mapsto T_1 \otimes z,
\\
Y_2 &\mapsto Q_1T_2T_3 \otimes z,
\\
Y_3 &\mapsto Q_1T_2Q_3 \otimes z,
\\
Y_4 &\mapsto Q_1Q_2T_3 \otimes z,
\\
Y_5 &\mapsto Q_1Q_2Q_3 \otimes z.
\end{align*}
One checks that the defining relations of $C(M^7_0)$ are satisfied.

This leads to the diagram \eqref{eq:pb58}, where the horizontal maps are the canonical quotient maps ($C^*(G_1)$ is a quotient of $C^*(G_2)$ by a gauge-invariant ideal). Next, one proves that this is a pullback diagram by repeating almost verbatim the proof of Lemma \ref{lemma:57}.
\end{proof}

We can now prove the main theorem of this section.

\begin{thm}\label{thm:qCW}
There is a sequence of quotient maps
\begin{equation}\label{eq:qCW}
\C\longleftarrow C(\C P_q^1)\longleftarrow C(\C P_q^2\sqcup_{\C P_q^1}\C P_q^2) \longleftarrow 
C(X_q^6)\longleftarrow C(Gr_q(2,4)) ,
\end{equation}
where the skeleta are (isomorphic to) the amplified graph C*-algebras in Table \ref{tab:1}.
Each morphism in \eqref{eq:qCW} is the top arrow in one of the following commutative diagrams:
\begin{equation}\label{eq:attachingcells}
\begin{array}{rc}
\begin{tikzpicture}[xscale=3.5,yscale=2,baseline=(current bounding box.north)]

\node (a1) at (0,1) {$\C$};
\node (a2) at (1,1) {$C\smash{(\C P_q^1)}$};
\node (b1) at (0,0) {$C\smash{(S^1)}$};
\node (b2) at (1,0) {$C\smash{(B^2_q)}$};

\path[-To] (a2) edge (a1) (b2) edge (b1) (a1) edge (b1) (a2) edge (b2);

\node at (0.5,0.5) {\textup{(I)}};

\end{tikzpicture}
&
\begin{tikzpicture}[scale=2.3,inner sep=1pt,baseline=(current bounding box.north)]

\node (a1) at (155:1) {$C(\C P_q^1)$};
\node (a2) at (0,0) {$C(\C P_q^2)$};
\node (a3) at (25:1) {$C(\C P_q^2\sqcup_{\smash{\C P_q^1}}\C P_q^2)$};
\node (b2) at (245:1) {$C(B_q^4)$};
\node (b3) at (-65:1) {$C(S^3_q)$};
\node (b1) at ($(a1)+(b2)$) {$C(S^3_q)$};
\node (b4) at ($(a3)+(b3)$) {$C(B_q^4)$};

\path[-To]
	(a2) edge (a1) (a3) edge (a2) (a3) edge (a1)
	(b4) edge (b3) (b2) edge (b1);
\path[-To,shorten >=3pt]
	(a1) edge (b1) (a2) edge (b2) (a2) edge (b3) (a3) edge (b4);

\node at (270:0.55) {\textup{(II)}};

\end{tikzpicture}
\\
\begin{tikzpicture}[xscale=3.5,yscale=2,baseline=(current bounding box.north)]

\node (a1) at (0,1) {$C\smash{(\C P_q^2\sqcup_{\C P_q^1}\C P_q^2)}$};
\node (a2) at (1,1) {$C(\smash{X^6_q})$};
\node (b1) at (0,0) {$\mathcal{T}^3/\mathcal{K}^3$};
\node (b2) at (1,0) {$\mathcal{T}^3$};

\path[-To] (a2) edge (a1) (b2) edge (b1) (a1) edge (b1) (a2) edge (b2);

\node at (0.5,0.5) {\textup{(III)}};

\end{tikzpicture}
&
\begin{tikzpicture}[xscale=3.5,yscale=2,baseline=(current bounding box.north)]

\node (a1) at (0,1) {$C\smash{(X_q^6)}$};
\node (a2) at (1,1) {$C\smash{(Gr_q(2,4))}$};
\node (b1) at (0,0) {$\mathcal{T}^4/\mathcal{K}^4$};
\node (b2) at (1,0) {$\mathcal{T}^4$};

\path[-To] (a2) edge (a1) (b2) edge (b1) (a1) edge (b1) (a2) edge (b2);

\node at (0.5,0.5) {\textup{(IV)}};

\end{tikzpicture}
\end{array}
\end{equation}
The rectangles (I), (III) and (IV) are pullback diagrams, and the two squares in (II) are pullbacks as well.
\end{thm}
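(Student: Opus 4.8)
\subsection*{Proof proposal}

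The plan is to obtain every square in (I)--(IV) as the image under the $U(1)$-fixed-point functor of an equivariant pullback living at the level of the full Cuntz--Krieger algebras, where Lemmas \ref{lemma:57} and \ref{lemma:58} already supply the two largest cases. First I would record the identifications of the skeleta as cores: $C(Gr_q(2,4))\cong C^*(L_{2,4})^{U(1)}$ (Cor.~\ref{cor:geom}), $C(X_q^6)\cong C^*(G_2)^{U(1)}$ (Rem.~\ref{rem:G2}), $C(\C P^2_q\sqcup_{\C P^1_q}\C P^2_q)\cong C^*(G_1)^{U(1)}$ (Prop.~\ref{prop:51}), together with $C(\C P^1_q)\cong C^*(L_3)^{U(1)}$ and, for the point, $\C\cong C^*(E_0)^{U(1)}$ where $E_0$ is the single vertex carrying one loop. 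These graphs form a chain $L_{2,4}\supset G_2\supset G_1\supset L_3\supset E_0$, each step deleting the maximal vertex $6$, then $5$, then the two incomparable maximal vertices $\{3,4\}$, then $2$. Each deleted set is hereditary (its vertices emit only loops) and saturated (every vertex carries a loop, so saturation is automatic), hence determines a gauge-invariant ideal; since the graphs are row-finite the quotient is equivariantly the $C^*$-algebra of the smaller graph, and restricting to cores (surjectivity being preserved by averaging over $U(1)$) yields exactly the arrows of \eqref{eq:qCW}.

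The backbone is that $U(1)$-invariants commute with pullbacks: if an equivariant square realises $X\cong A\times_C B=\{(a,b):\alpha(a)=\beta(b)\}$ with the diagonal action, then $(a,b)$ is invariant iff both entries are, so $X^{U(1)}\cong A^{U(1)}\times_{C^{U(1)}}B^{U(1)}$. With the gauge action on the rightmost tensor factor one has $(B\otimes C(S^1))^{U(1)}=B$, because $C(S^1)^{U(1)}=\C$. Diagrams (IV) and (III) are then nothing but the fixed-point images of the outer rectangles of \eqref{eq:pb57} and \eqref{eq:pb58}: the top corners become the cores listed above, while $(\mathcal{T}^k\otimes C(S^1))^{U(1)}=\mathcal{T}^k$ and $(\mathcal{T}^k/\mathcal{K}^k\otimes C(S^1))^{U(1)}=\mathcal{T}^k/\mathcal{K}^k$ give the bottom rows for $k=4$ and $k=3$ respectively.

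For (I) I would run the method of Lemma \ref{lemma:57} for $L_3$ (deleting its maximal vertex to reach $E_0$), giving an equivariant pullback with bottom row $\mathcal{T}\otimes C(S^1)\twoheadrightarrow \mathcal{T}/\mathcal{K}\otimes C(S^1)$; its fixed-point image is (I) once $C(B^2_q)=\mathcal{T}$ and $C(S^1)=\mathcal{T}/\mathcal{K}$ are used. Equivalently the invariant pullback is $\C\times_{C(S^1)}\mathcal{T}=\{t\in\mathcal{T}:q(t)\in\C 1\}=\mathcal{K}+\C 1$, the Podle\'s sphere $C(\C P^1_q)$. Diagram (II) assembles two cells from the same recipe applied to $L_5$ and $G_1$ (both with $k=2$): deleting vertex $3$ from $L_5$ to reach $L_3$ yields, on invariants, the left square $C(\C P^2_q)\cong C(\C P^1_q)\times_{C(S^3_q)}C(B^4_q)$ (the CW-structure of $\C P^2_q$), while deleting vertex $4$ from $G_1$ to reach $L_5$ yields the right square, whose pullback corner is $C^*(G_1)^{U(1)}$ and hence $C(\C P^2_q\sqcup_{\C P^1_q}\C P^2_q)$ by Prop.~\ref{prop:51}. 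Here $C(B^4_q)=\mathcal{T}^2$ and $C(S^3_q)=\mathcal{T}^2/\mathcal{K}^2$, the shared node $C(\C P^2_q)$ and the outer commuting triangle are forced by functoriality, and agreement of the right square with the defining pullback \eqref{eq:comparalo} is automatic, both having corner $C^*(G_1)^{U(1)}$.

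The main obstacle is twofold. First, the equivariant graph-level pullbacks for the auxiliary graphs $L_3$, $L_5$, $G_1$ are not literally Lemmas \ref{lemma:57}--\ref{lemma:58}, so one must rerun the Toeplitz representation of Lemma \ref{lemma:54} and the kernel computation of parts \ref{en:573}--\ref{en:575} of Lemma \ref{lemma:57} for each; this is routine but needs the identifications $C(B^{2k}_q)\cong\mathcal{T}^k$ and $C(S^{2k-1}_q)\cong\mathcal{T}^k/\mathcal{K}^k$ in order to relabel the bottom rows geometrically. Second, and more delicate, is checking that the bottom map $C(\C P^2_q)\to C(S^3_q)$ produced by the $G_1$-representation is the intended attaching map (factoring through the $\C P^1_q$-skeleton), so that pasting the two squares of (II) is genuinely compatible with \eqref{eq:comparalo}. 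Once these verifications are in place, all the pullback assertions of the theorem collapse to the single principle that taking $U(1)$-invariants preserves pullbacks.
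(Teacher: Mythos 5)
Your treatment of diagrams (III) and (IV) is exactly the paper's proof: take $U(1)$-invariants of the equivariant pullbacks of Lemmas \ref{lemma:57} and \ref{lemma:58}, using that fixed points commute with pullbacks and that $(\mathcal{T}^k\otimes C(S^1))^{U(1)}=\mathcal{T}^k$, then substitute the identifications from Cor.~\ref{cor:geom}, Prop.~\ref{prop:51} and Rem.~\ref{rem:G2}. Your route to (I) also works, since for $k=1$ the identifications $C(B^2_q)\cong\mathcal{T}$ and $C(S^1)\cong\mathcal{T}/\mathcal{K}$ are genuinely true (the paper instead just quotes the equivariant pullback of \cite[Prop.~4.1]{ADHT22} at $n=1$).

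The genuine gap is in your derivation of (II), and it is concentrated in the ``relabelling'' step: the identifications $C(B^4_q)\cong\mathcal{T}^2$ and $C(S^3_q)\cong\mathcal{T}^2/\mathcal{K}^2$ are false. The quotient $\mathcal{T}^n/\mathcal{K}^n$ is the C*-algebra of the \emph{multipullback} (Heegaard-type) sphere $S^{2n-1}_H$ of \cite{hnpsz18}, which merely has the same K-theory as the sphere; it is not the Vaksman--Soibelman sphere. Concretely, the abelianization of $\mathcal{T}^2/\mathcal{K}^2$ is $C(T^2)$ (a torus of characters), while the abelianization of $C(S^3_q)=C(SU_q(2))$ is $C(S^1)$, so $C(S^3_q)\not\cong\mathcal{T}^2/\mathcal{K}^2$; and since $\mathcal{K}$ sits in each of $C(B^4_q)$ and $\mathcal{T}^2$ as the minimal essential ideal with quotients $C(S^3_q)$ and $\mathcal{T}^2/\mathcal{K}^2$ respectively, also $C(B^4_q)\not\cong\mathcal{T}^2$. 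Hence rerunning the Toeplitz method of Lemmas \ref{lemma:54}--\ref{lemma:57} on $L_5$ and $G_1$ produces pullback squares whose bottom rows are $\mathcal{T}^2\to\mathcal{T}^2/\mathcal{K}^2$ --- correct pullbacks, but \emph{not} the squares asserted in (II), which carry the genuine quantum balls and spheres (the paper is deliberately careful about this distinction, cf.~Rem.~\ref{rem:Ktheory}). The paper avoids the issue entirely: the left square of (II) is the $n=2$ case of the known equivariant pullback $C(\C P^n_q)\cong C(\C P^{n-1}_q)\times_{C(S^{2n-1}_q)}C(B^{2n}_q)$ from \cite[Prop.~4.1]{ADHT22}, and the right square is obtained by \emph{pasting} that square on top of the defining pullback \eqref{eq:comparalo}, the outer rectangle of two stacked pullbacks being a pullback. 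This pasting also disposes of the compatibility point you flag as delicate: your claim that agreement with \eqref{eq:comparalo} is ``automatic, both having corner $C^*(G_1)^{U(1)}$'' is not a proof --- two pullback squares over the same corner need not have the same structure maps --- whereas in the paper the arrow $C(\C P^2_q\sqcup_{\C P^1_q}\C P^2_q)\to C(\C P^2_q)$ is by construction the one from \eqref{eq:comparalo}. To repair your argument for (II), replace the Toeplitz relabelling by the citation of \cite[Prop.~4.1]{ADHT22} and the gluing argument.
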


\begin{proof}
The pullback diagram (I) in \eqref{eq:attachingcells} is a special case of the pullback diagram
\begin{center}
\begin{tikzpicture}[xscale=3,yscale=1.8]

\node (a1) at (0,1) {$C\smash{(\C P_q^{n-1})}$};
\node (a2) at (1,1) {$C\smash{(\C P_q^n)}$};
\node (b1) at (0,0) {$C\smash{(S^{2n-1}_q)}$};
\node (b2) at (1,0) {$C\smash{(B^{2n}_q)}$};

\path[-To] (a2) edge (a1) (b2) edge (b1) (a1) edge (b1) (a2) edge (b2);

\draw (0.5,0.6) -- (0.5,0.45) -- (0.6,0.45);

\end{tikzpicture}
\end{center}
in \cite[Prop.~4.1]{ADHT22}, valid for every $n\geq 1$.
For $n=2$ this is exactly the left square in (II).
Gluing this diagram with \eqref{eq:comparalo} we get
\begin{center}
\hspace*{1cm}\begin{tikzpicture}[xscale=3,yscale=1.8]

\node (a1) at (0,1) {$C\smash{(\C P_q^2)}$};
\node (a2) at (1,1) {$C\smash{(\C P_q^2\sqcup_{\C P_q^1}\C P_q^2)}$};
\node (b1) at (0,0) {$C\smash{(\C P_q^1)}$};
\node (b2) at (1,0) {$C\smash{(\C P_q^2)}$};
\node (c1) at (0,-1) {$C\smash{(S^3_q)}$};
\node (c2) at (1,-1) {$C\smash{(B^4_q)}$};

\path[-To] (a2) edge (a1) (b2) edge (b1) (a1) edge (b1) (a2) edge (b2) (c2) edge (c1) (b1) edge (c1) (b2) edge (c2);

\draw (0.5,0.6) -- (0.5,0.45) -- (0.6,0.45);
\draw (0.5,-0.4) -- (0.5,-0.55) -- (0.6,-0.55);

\end{tikzpicture} .
\end{center}
The outer rectangle is a pullback, and is exactly the right square in (II).

From the pullbacks in Lemma \ref{lemma:58} and \ref{lemma:57}, respectively, passing to fixed point subalgebras for the action of $U(1)$, we get the pullback diagrams:
\begin{center}
\begin{tikzpicture}[xscale=3.5,yscale=2,baseline=(current bounding box.north)]

\node (a1) at (0,1) {$C^*(G_1)^{U(1)}$};
\node (a2) at (1,1) {$C^*(G_2)^{U(1)}$};
\node (b1) at (0,0) {$\mathcal{T}^3/\mathcal{K}^3$};
\node (b2) at (1,0) {$\mathcal{T}^3$};

\path[-To] (a2) edge (a1) (b2) edge (b1) (a1) edge (b1) (a2) edge (b2);

\draw (0.5,0.6) -- (0.5,0.45) -- (0.6,0.45);

\end{tikzpicture}
\qquad
\begin{tikzpicture}[xscale=3.5,yscale=2,baseline=(current bounding box.north)]

\node (a1) at (0,1) {$C^*(G_2)^{U(1)}$};
\node (a2) at (1,1) {$C^*(L_{2,4})^{U(1)}$};
\node (b1) at (0,0) {$\mathcal{T}^4/\mathcal{K}^4$};
\node (b2) at (1,0) {$\mathcal{T}^4$};

\path[-To] (a2) edge (a1) (b2) edge (b1) (a1) edge (b1) (a2) edge (b2);

\draw (0.5,0.6) -- (0.5,0.45) -- (0.6,0.45);

\end{tikzpicture}
\end{center}
But 
$C^*(L_{2,4})^{U(1)}\cong C(Gr_q(2,4))$,
$C^*(G_1)^{U(1)}\cong C(\C P^2_q\sqcup_{\C P^1_q}\C P^2_q)$
and $C^*(G_2)^{U(1)}\cong C(X_q^6)$
(Cor.~\ref{cor:geom}, Prop.~\ref{prop:51}, Rem.~\ref{rem:G2}). Thus, we get the pullbacks (III) and (IV).
\end{proof}

The arrows in \eqref{eq:qCW} are written right-to-left to make the correspondence with the classical CW-structure \eqref{eq:classicalCW} more clear. We interpret each pullback diagram in \eqref{eq:attachingcells} as ``attaching'' a cell (the quantum space on the bottom-right corner) to a skeleton in \eqref{eq:qCW} to obtain the next one. In particular, in (II) we first attach a $4$-cell to the $2$-skeleton $\C P^1_q$ to get $\C P^2_q$, and then attach another $4$-cell to $\C P^2_q$ to get the $4$-skeleton $\C P_q^2\sqcup_{\C P_q^1}\C P_q^2$.
The passage from the $2$-skeleton to the $4$-skeleton can be equivalently described as the following multipullback
\begin{equation}\label{eq:multi}
\begin{tikzpicture}[scale=2]

\node (a1) at (2,1) {$C(\C P_q^2\sqcup_{\C P_q^1}\C P_q^2)$};

\node (b1) at (0,0) {$C(B^4_q)$};
\node (b2) at (2,0) {$C(\C P_q^1)$};
\node (b3) at (4,0) {$C(B^4_q)$};

\node (c1) at (1,-0.7) {$C(S^3_q)$};
\node (c2) at (3,-0.7) {$C(S^3_q)$};

\node (d) at (2,-1.5) {$C(S^3_q)$};

\path[-To] (a1) edge (b1) (a1) edge (b2) (a1) edge (b3) (b1) edge (c1) (b2) edge (c1) (b2) edge (c2) (b3) edge (c2) (b1) edge[bend right] (d) (b3) edge[bend left] (d);

\end{tikzpicture},
\end{equation}
where we attach (in one step) two $4$-cells to $\C P^1_q$. Indeed, we can attach to the diagram (II) in \eqref{eq:attachingcells}
a trivial pullback diagram for $S^3_q$ and get the commutative diagram
\begin{center}
\begin{tikzpicture}[scale=2.5]

\node (a) at (30:1) {$C(\C P_q^2\sqcup_{\smash{\C P_q^1}}\C P_q^2)$};

\node (b1) at (0,0) {$C(\C P_q^2)$};
\node (b2) at (225:1) {$C(B_q^4)$};
\node (b3) at (-45:1) {$C(\C P_q^1)$};
\node (b4) at ($(b2)+(b3)$) {$C(S^3_q)$};

\node (c1) at (3,0) {$C(B_q^4)$};
\node (c2) at ($(c1)-(a)$) {$C(S^3_q)$};

\node (d1) at ($(c2)+(225:1)$) {$C(S^3_q)$};
\node (d3) at ($(c2)+(315:1)$) {$C(S^3_q)$};
\node (d2) at ($(d1)+(d3)-(c2)$) {$C(S^3_q)$};

\path[-To]
	(a) edge (b1) (a) edge (c1) (b1) edge (c2) (c1) edge (c2)
	(b1) edge (b2) (b1) edge (b3) (b2) edge (b4) (b3) edge (b4)
	(c2) edge (d1) (c2) edge (d3) (d1) edge node[above,sloped] {$=$} (d2) (d3) edge node[above,sloped] {$=$} (d2);

\end{tikzpicture},
\end{center}
where each rhombus is a pullback. From \cite[Lemma 0.2]{Rud12} we get the multipullback diagram \eqref{eq:multi}.

Finally, we pass from the 4-skeleton to the 6-skeleton and from the 6-skeleton to the 8-skeleton by attaching ``Toeplitz balls'' of the appropriate dimension along their boundary. Here, we interpret $\mathcal{T}$ as ``functions'' on a quantum disk, and $\mathcal{T}^n$ as the Cartesian product of $n$ quantum disks, that in the classical case is a topological space homeomorphic to a $2n$-ball. The ``boundary'' $\mathcal{T}^n/\mathcal{K}^n$ is isomorphic to the C*-algebra of the multipullback quantum sphere $S^{2n-1}_H$, and has the same K-theory as the $2n-1$ dimensional sphere  \cite{hnpsz18}.
We conclude that,

\begin{cor}
The sequence \eqref{eq:qCW} is a strict CW-filtration in the sense of \cite{DHMSZ20}.
\end{cor}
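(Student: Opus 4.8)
The plan is to observe that the structural content of a CW-filtration has already been supplied by Theorem \ref{thm:qCW}: the chain \eqref{eq:qCW} consists of surjective $*$-homomorphisms, and each of its arrows has been exhibited as the top horizontal map of a pullback square (one of (I), (III), (IV), or one of the two squares in (II)) whose bottom-right corner is the C*-algebra of the cell being attached and whose bottom-left corner is the C*-algebra of its boundary, the latter mapping to the former by a surjection that plays the role of the dual of an inclusion $S^{k-1}\hookrightarrow B^k$. By the definition recalled just before the statement, to upgrade this to a \emph{strict} CW-filtration in the sense of \cite{DHMSZ20} it then suffices to check that in each of these squares the ``cell'' algebra has the K-theory of a closed even-dimensional ball (i.e.\ $K_0\cong\Z$ and $K_1=0$) and the ``boundary'' algebra has the K-theory of the corresponding odd-dimensional sphere (i.e.\ $K_0\cong K_1\cong\Z$). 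So the proof reduces to four K-theory computations, one per cell type.

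First I would dispose of the two Toeplitz cells appearing in (III) and (IV). Here the cell algebra is $\mathcal{T}^n$ with $n=3$ and $n=4$; since $K_0(\mathcal{T})\cong\Z$ is generated by $[1]$ and $K_1(\mathcal{T})=0$, the K\"unneth theorem gives $K_0(\mathcal{T}^n)\cong\Z$ and $K_1(\mathcal{T}^n)=0$, which is exactly the K-theory of $B^{2n}$. The boundary algebra $\mathcal{T}^n/\mathcal{K}^n$ is, as recalled above, isomorphic to the multipullback quantum sphere $C(S^{2n-1}_H)$, whose K-theory agrees with that of $S^{2n-1}$ by \cite{hnpsz18}. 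Thus (III) and (IV) attach a $6$-cell and an $8$-cell, respectively, in the required sense.

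Next I would treat the quantum-ball cells in (I) and (II): the cell algebras are $C(B^2_q)$ and $C(B^4_q)$ and the boundaries are $C(S^1)$ and $C(S^3_q)$. The quantum balls $C(B^{2m}_q)$ are KK-equivalent to $\C$, so they have $K_0\cong\Z$ and $K_1=0$ as required, while the odd quantum spheres $C(S^{2m-1}_q)$ (with $C(S^1)=C(S^1_q)$ when $m=1$) have the same K-theory as $S^{2m-1}$; these are classical facts. Hence (I) attaches a $2$-cell and (II) attaches two $4$-cells with the correct K-theory. Assembling the four computations with the pullback structure of Theorem \ref{thm:qCW} then yields the claim. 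The only input that is neither formal nor a standard computation is the identification of the K-theory of the Toeplitz spheres $\mathcal{T}^n/\mathcal{K}^n$ with that of $S^{2n-1}$, which is precisely what I would import from \cite{hnpsz18}; this is the one genuinely non-trivial ingredient, and everything else in the argument is bookkeeping.
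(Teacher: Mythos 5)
Your proposal is correct and takes essentially the same route as the paper: the paper likewise deduces the corollary from the pullback diagrams of Theorem~\ref{thm:qCW} together with the observation that the ``cells'' in (I)--(II) are standard quantum balls with boundaries quantum spheres, while in (III)--(IV) the Toeplitz ball $\mathcal{T}^n$ has the K-theory of $B^{2n}$ and its boundary $\mathcal{T}^n/\mathcal{K}^n\cong C(S^{2n-1}_H)$ has the K-theory of $S^{2n-1}$ by \cite{hnpsz18}. Your extra details (the K\"unneth computation for $\mathcal{T}^n$ and the KK-equivalence of $C(B^{2m}_q)$ with $\C$) merely make explicit what the paper treats as standard.
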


\begin{rem}\label{rem:Ktheory}
For the quotient maps $C(\C P_q^2\sqcup_{\C P_q^1}\C P_q^2) \longleftarrow 
C(X_q^6)\longleftarrow C(Gr_q(2,4))$ one can also get two pullback diagrams, different from the ones in \eqref{eq:attachingcells}, by using the result \cite[Cor.~2.5]{ADHT22} about trimmable graph C*-algebras. However, in doing so one gets pullback diagrams where the C*-algebras in the bottom don't have the K-theory of an odd-dimensional sphere and an even-dimensional closed ball. Thus, they cannot be interpreted as ``attaching cells'' to noncommutative spaces.
\end{rem}

\appendix
\renewcommand{\sectionname}{}
\section{On the dimension group of a complex projective space}\label{app:CPn}

Let $n\geq 2$. A classical result by Atiyah and Todd \cite{AT60} states that
\[
K^0(\C P^{n-1})=\Z[x]/(x^n) ,
\]
where $x:=1-[\mathcal{L}_1]$ is the Euler class of the Hopf line bundle $\mathcal{L}_1$. In general, for $k\in\Z$, we denote by $\mathcal{L}_k\to\C P^{n-1}$ the line bundle associated to the $U(1)$-principal bundle $S^{2n-1}\to\C P^{n-1}$ and to the representation $z\mapsto z^k$ of $U(1)$.
From $[\mathcal{L}_k]=[\mathcal{L}_1]^k$, $[\mathcal{L}_1]=1-x$, and the binomial formula it follows that
\begin{equation}\label{eq:binomial}
[\mathcal{L}_k]=\sum_{i=0}^{n-1}\binom{k}{i}(-x)^i \qquad\forall\; k\in\N ,
\end{equation}
with the convention that $\binom{k}{i}=0$ if $i>k$. This formula is valid for $k<0$ as well, if we interpret the binomial as
\[
\binom{k}{i}:=\frac{k(k-1)\cdots (k-i+1)}{i!}=(-1)^i\binom{|k|+i-1}{i}.
\]

\begin{prop}~
\begin{enumerate}
\item\label{en:1}
For every $n\geq 2$,
\begin{equation}\label{eq:inclusion}
K^0(\C P^{n-1})^+\subseteq\{0\}\cup\{a_0+a_1x+\ldots+a_{n-1}x^{n-1}\,|\,a_i\in\Z,a_0\geq 1 \} .
\end{equation}

\item\label{en:2}
The inclusion \eqref{eq:inclusion} is an equality if $n=2$.

\item\label{en:3}
The inclusion \eqref{eq:inclusion} is proper if $n\geq 3$.
In particular, $1+x\notin K^0(\C P^{n-1})^+$ if $n\geq 3$.
\end{enumerate}
\end{prop}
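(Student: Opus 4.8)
The plan is to use three standard facts about $\C P^{n-1}$: the positive cone $K^0(\C P^{n-1})^+$ consists exactly of the classes of genuine complex vector bundles; the rank defines a ring homomorphism $\mathrm{rk}\colon K^0(\C P^{n-1})\to\Z$; and every line bundle over $\C P^{n-1}$ equals $\mathcal{L}_k$ for a unique $k\in\Z$.

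For part (i) I would first identify the constant coefficient $a_0$ with the rank. Since $\mathcal{L}_1$ and the trivial bundle both have rank $1$, one gets $\mathrm{rk}(x)=\mathrm{rk}(1)-\mathrm{rk}(\mathcal{L}_1)=0$, hence $\mathrm{rk}(a_0+a_1x+\dots+a_{n-1}x^{n-1})=a_0$. If $y\in K^0(\C P^{n-1})^+$ is nonzero, then $y=[E]$ for a vector bundle $E$ over the connected space $\C P^{n-1}$, so $a_0=\mathrm{rk}(E)\geq 0$, with $a_0=0$ forcing $E=0$ and hence $y=0$. Thus $a_0\geq 1$ for every nonzero positive class, which is exactly \eqref{eq:inclusion}.

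For part (ii), with $n=2$, I would realize each candidate class by an explicit bundle. Specializing \eqref{eq:binomial} to $n=2$ gives $[\mathcal{L}_k]=1-kx$, so the direct sum of $\mathcal{L}_{-a_1}$ with $a_0-1$ copies of the trivial line bundle $\mathcal{L}_0$ has class $(1+a_1x)+(a_0-1)=a_0+a_1x$. This is a genuine vector bundle whenever $a_0\geq 1$, so every such class is positive; combined with part (i) this yields equality.

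Part (iii) is where the real content lies, and the claim $1+x\notin K^0(\C P^{n-1})^+$ already gives properness, since $1+x$ lies in the right-hand side of \eqref{eq:inclusion}. The key observation --- and the step I expect to be the main obstacle --- is that a nonzero positive class of rank $1$ must be the class of a \emph{rank-one} bundle, that is, of a line bundle, and hence equal to $[\mathcal{L}_k]$ for some $k\in\Z$. Assuming $1+x=[\mathcal{L}_k]$ and expanding by \eqref{eq:binomial}, the coefficient of $x$ forces $k=-1$, but then the coefficient of $x^2$ equals $\binom{-1}{2}=1$, whereas in $1+x$ it is $0$. For $n\geq 3$ this coefficient is an honest constraint, so we reach a contradiction and conclude $1+x\notin K^0(\C P^{n-1})^+$.
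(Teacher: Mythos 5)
Your proof is correct, and parts \ref{en:1} and \ref{en:2} coincide with the paper's own argument: the constant coefficient is the rank, so a nonzero positive class has $a_0\geq 1$, and for $n=2$ every $a_0+a_1x$ with $a_0\geq 1$ is realized by $(a_0-1)[\mathcal{L}_0]+[\mathcal{L}_{-a_1}]$. For part \ref{en:3}, however, you take a genuinely different route. The paper asserts that every vector bundle on $\C P^{n-1}$ is \emph{stably isomorphic to a direct sum of line bundles}, deduces from \eqref{eq:binomial} that every positive class has $x^2$-coefficient $a_2\geq 0$, concludes that $1-x^2$ is not positive, and finally removes the positive factor $1-x=[\mathcal{L}_1]$ using that $K^0(X)^+$ is closed under multiplication. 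You instead isolate the rank-one case: a nonzero positive class is the class of a genuine bundle, a rank-one positive class is therefore the class of a line bundle, every line bundle on $\C P^{n-1}$ is some $\mathcal{L}_k$ (classification by $c_1\in H^2(\C P^{n-1};\Z)\cong\Z$), and $1+x=[\mathcal{L}_k]$ is impossible because the $x$-coefficient forces $k=-1$ while the $x^2$-coefficient of $[\mathcal{L}_{-1}]$ is $\binom{-1}{2}=1\neq 0$, which is a genuine constraint precisely when $n\geq 3$. The step you flagged as the main obstacle is exactly right and is the heart of the matter.

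In fact your route buys more than elegance: the paper's stable-splitting claim is false for $n\geq 3$. Take a rank-two bundle $V$ on $\C P^2$ with $c_1=0$ and $c_2=1$ (rank-two bundles on a four-dimensional complex realize every pair of Chern classes). If $V\oplus\C^m$ were a sum of line bundles, comparing Chern classes would give integers $m_i$ with $\sum_i m_i=0$ and $\sum_{i<j}m_im_j=1$, hence $\sum_i m_i^2=-2$, impossible. Moreover, a Chern character computation shows $[V]=2-x^2$, a positive class with $a_2=-1$, so the paper's intermediate assertion that all positive classes have $a_2\geq 0$ also fails. The paper's final conclusions remain true --- $1-x^2$ is not positive either, since it has rank one and your line-bundle argument forces $k=0$, i.e.\ $[\mathcal{L}_0]=1\neq 1-x^2$ --- but they do not follow from the argument as written there. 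Your proof is thus not merely an alternative: it is more elementary (it needs positivity only for a single rank-one class rather than a constraint on the whole positive cone) and it closes an actual gap in the appendix proof.
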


\begin{proof}
Point \ref{en:1} follows from the fact that for a vector bundle $V\to \C P^{n-1}$,
$[V]=a_0+O(x)$ where $a_0$ is the rank of $V$. Hence, $a_0\geq 0$ for every element in the positive cone, and $a_0=0$ implies that $[V]=0$ (the only rank $0$ vector bundle is the trivial one).

To prove point \ref{en:2}, observe that $0\in K^0(\C P^1)^+$. Moreover, for every $a_0,a_1\in\Z$ with $a_0\geq 1$, one has
\[
a_0+a_1x=(a_0-1)[\mathcal{L}_0]+[\mathcal{L}_{-a_1}] \;,
\]
which proves that $a_0+a_1x\in K^0(\C P^1)^+$.

Finally, let $n\geq 3$. To prove point \ref{en:3} it is enough to show that $1+x\notin K^0(\C P^{n-1})^+$. From \eqref{eq:binomial} we deduce that
\[
[\mathcal{L}_k]=1-kx+\frac{k(k-1)}{2}x^2+O(x^3)
\]
has coefficient of $x^2$ which is non-negative, for all $k\in\Z$.

Any vector bundle $V\to \C P^{n-1}$ is stably isomorphic to a direct sum of line bundles, hence every positive elements $[V]$ in $K^0(\C P^{n-1})^+\smallsetminus\{0\}$ is a linear combination of classes $[\mathcal{L}_k]$, $k\in\Z$, with positive coefficients. It follows from the above discussion that $[V]=a_0+a_1x+a_2x^2+O(x^3)$ has $a_2\geq 0$.
Thus $1-x^2\notin K^0(\C P^{n-1})^+$. Since $1-x=[\mathcal{L}_1]$ is positive and $1-x^2=(1-x)(1+x)$, this means also that $1+x\notin K^0(\C P^{n-1})^+$.
\end{proof}

For $n\geq 3$ it is not an easy task to characterize the positive cone.
For example, there are polynomials with positive coefficients, such as $1+x$, that do not belong to $K^0(\C P^{n-1})^+$.

\end{document}